\renewcommand\theequation{\thesection.\@arabic\c@equation}
\def\theequation{\thesection.\arabic{equation}}
\newcommand{\X}{\mathrm{{\cal X}}}
\newcommand{\tnorm}[1]{{\left\vert\kern-0.25ex\left\vert\kern-0.25ex\left\vert #1\right\vert\kern-0.25ex\right\vert\kern-0.25ex\right\vert}}
\newcommand{\vertiii}[1]{{\left\vert\kern-0.25ex\left\vert\kern-0.25ex\left\vert #1
		\right\vert\kern-0.25ex\right\vert\kern-0.25ex\right\vert}}
\newcommand{\noN}{\nonumber}
\newtheorem{thm}{Theorem}[section]
\newtheorem{lemma}{Lemma}[section]
\newtheorem{rem}{Remark}[section]
\let\pa\partial
\let\na\nabla
\let\eps\varepsilon
\newcommand{\diver}{\operatorname{div}}
\renewcommand{\theequation}{A.\arabic{equation}}
\numberwithin{equation}{section}
\numberwithin{figure}{section}
\numberwithin{table}{section}
\begin{document}
	\title{ \textbf{{\em Error Analysis of a Fully Discrete Scheme for The Cahn--Hilliard Cross-Diffusion Model in Lymphangiogenesis}}}
	\author{ Boyi Wang \thanks{Department of Mathematics, Pennsylvania State University, 54 McAllister Street, Pennsylvania, USA (buw176@psu.edu)} \and Naresh Kumar \thanks{Department of Mathematics and Computing, Dr B R Ambedkar National Institute of Technology Jalandhar, India (nareshk@nitj.ac.in)} 
		\and Jinyun Yuan\thanks{School of Computer Science and Technology, Dongguan University of Technology, Dongguan, China (yuanjy@gmail.com)} }
	\date{}
	\maketitle
	
\begin{abstract}
This paper introduces a stabilized finite element scheme for the Cahn--Hilliard cross-diffusion model, which is characterized by strongly coupled mobilities, nonlinear diffusion, and complex cross-diffusion terms. These features pose significant analytical and computational challenges, particularly due to the destabilizing effects of cross-diffusion and the absence of standard structural properties. To address these issues, we establish discrete energy stability and prove the existence of a finite element solution for the proposed scheme. A key contribution of this work is the derivation of rigorous error estimates, utilizing the novel $L^{\frac{4}{3}}(0,T; L^{\frac{6}{5}}(\Omega))$ norm for the chemical potential. This enables a comprehensive convergence analysis, where we derive error estimates in the $L^{\infty}(H^1(\Omega))$ and $L^{\infty}(L^2(\Omega))$ norms, and establish convergence of the numerical solution in the $L^{\frac{4}{3}}(0,T; W^{1,\frac{6}{5}}(\Omega))$ norm. Furthermore, the convergence analysis relies on a uniform bound of the form $\sum_{k=0}^n\tau\|\nabla(\cdot)\|_{L^{\frac{6}{5}}}^{\frac{4}{3}}$ to control the chemical potentials, marking a clear departure from the classical $\sum_{k=0}^n\tau\|\nabla(\cdot)\|_{L^{2}}^{2}$ estimate commonly used in Cahn--Hilliard-type models. Our approach builds upon and extends existing frameworks, effectively addressing challenges posed by cross-diffusion effects and the lack of uniform estimates. Numerical experiments validate the theoretical results and demonstrate the scheme’s ability to capture phase separation dynamics consistent with the Cahn--Hilliard equation. 
\end{abstract}

\textbf{Keywords:}  Cahn--Hilliard cross-diffusion model, energy stability,  error estimates, convergence analysis

{\em AMS Subject Classifications(2020)}. 65N15, 65N30, 92C37

\section{\normalsize Introduction} 
\vspace{-0.3cm}
\subsection{Modeling background}	
\vspace{-0.3cm}

The Cahn--Hilliard cross-diffusion model has emerged as an important mathematical framework for simulating lymphangiogenesis—the formation of new lymphatic vessels from pre-existing ones. This biological process plays a vital role in various physiological and pathological contexts, including tissue regeneration, immune responses, and cancer metastasis.  Recently, a Cahn--Hilliard (CH) cross-diffusion model with an energy-based structure has been proposed to capture morphological pattern formation \cite{JuLi25, JuLi24, JuWa23}. Derived via an energy variational approach, the model is formulated as
\begin{equation}
	\partial_t \textbf{u} = \nabla \cdot \big(\textbf{M}(\textbf{u}) \nabla \delta_{u} E\big),
\end{equation}
where \( E(\textbf{u}) \) is the energy functional, and \( \textbf{M}(\textbf{u}) \) denotes a cross-diffusion mobility matrix. 

The origins of this model trace back to earlier studies \cite{Hug41, RoFo08}, where it was used to describe network formation phenomena, particularly the sprouting of lymphatic vessels in biological systems \cite{RoFo08, RBS06}. A key property of the CH framework—shared with classical phase-field models \cite{ElGa96}—is its energy dissipation structure. This property is critical for ensuring the stability of numerical approximations, proving analytical results, and deriving error estimates.  In the case of phase-field models, these features have been thoroughly analyzed, resulting in a wide range of efficient numerical schemes supported by strong theoretical guarantees \cite{CWWW19, CE1992, DWW16, DuJu18, FTY13, FTY24, LJWZ22, SXY19, SXY18, TWY22, TXY22}. However, extending the Cahn--Hilliard framework to include a general cross-diffusion matrix \( \textbf{M}(\textbf{u}) \) introduces substantial mathematical and computational complexities. Unlike standard diffusion operators with scalar or diagonal mobilities, the cross-diffusion matrix couples multiple components of \( \textbf{u} \), significantly affecting solution regularity, stability analysis, and well-posedness. Classical energy estimates often fail in this setting due to intricate inter-variable interactions, demanding more sophisticated analytical techniques. This study addresses these challenges by conducting a rigorous error analysis based on novel norm estimates that are specifically adapted to the nonlinear structure of the cross-diffusion system.
\subsection{Problem description}	
In this paper, we investigate error estimates and study the convergence of a numerical scheme for the following Cahn--Hilliard cross-diffusion model
	\begin{equation}\label{1.mu}
	\begin{cases}
	\pa_t\phi = \diver(\na\mu-c\na h_c(\phi,c)), \\
	\pa_t c = -\diver(c(\na\mu-c\na h_c(\phi,c))) + \diver(g(c)\nabla h_c(\phi,c)),\\
	\mu = -\Delta\phi + \eps^{-2}f(\phi) + h_\phi(\phi,c)
	\end{cases}
	\end{equation}
for $x\in \Omega,\ 0<t\le T$, with periodic or homogeneous Neumann boundary conditions and the initial data
\begin{align}\label{1.ic}
	\phi(x,0) = \phi_0(x),\quad c(x,0)=c_0(x), \quad x\in \Omega.
\end{align}
Here, $\Omega$ is the bounded domain of $\mathbb{R}^d$ ($d=1,2,3$) with the smooth boundary $\partial\Omega$ and $T>0$ is the final time. The nonlinear terms $f$ and $h$ are from the energy density functions, and it can be shown that the system possesses the energy dissipation law 
\begin{align}
	\frac{\text{d}E}{\text{d}t}\leq 0,\quad \mbox{where } E = \int_{\Omega}\bigg(\frac{1}{2}|\na\phi|^2 + \eps^{-2}F(\phi)+h(\phi,c)\bigg)dx. \notag
\end{align}
The nonlinear terms are assumed to be the sum of the classical potential function $\frac{1}{\eps^2}F(\phi)$ and the nutrient energy $h(\phi,c)$.
We focus on the estimates for the model \eqref{1.mu} with the nutrient energy density $h(\phi,c)$ given by \cite{GKT22}  
\begin{align}
	h(\phi,c)=c^2/2-c\phi\label{he-1},
\end{align}
and $g(c) = 1$. In this case, since $h_c$ is linear with respect to $\phi$ and $c$, we may simply use the following homogenous bounday condition 
\begin{align}\label{1.bn}\frac{\partial \phi}{\partial n} =\frac{\partial c}{\partial n}=\frac{\partial \mu}{\partial n} = 0, \quad x\in \partial\Omega,\ 0\le t\le T.
\end{align} 
In this paper, we assume $f'$ is bounded with $f(\phi) = F'(\phi)$ for some regular potential $F$ and analyze an approximate model by applying a truncation technique to the potential function $F$. In addition, we assume that there exists the positive constants $K_i>\varepsilon^2,\, i=1,2$ and $L\ge 1$ such that
\begin{align}
	&F(\phi)\geq K_1\phi^2-K_2,\, \forall\; \phi\in \mathbb{R^d},\label{apf-1}\\
	&\vert f'(\phi)\vert\leq L,\,\forall\; \phi\in \mathbb{R^d}.\label{apf-2}
\end{align}
For example, we may use the double-well potential with truncation
\begin{equation}
	F(s)=\left\{
	\begin{array}{lr}
		\begin{aligned}
			& \frac{3M^2-1}{2}(s-M)^2+(M^3-M)(s-M)+\frac14(M^2-1)^2,&&  s\geq M,\\
			& \frac14(s^2-1)^2, && s\in[-M,M],\\
			& \frac{3M^2-1}{2}(s+M)^2-(M^3-M)(s+M)+\frac14(M^2-1)^2,&&  s\leq -M.\label{f-trc}
		\end{aligned}
	\end{array}\right.
\end{equation}
or the logarithmic potential in \cite{GKT22, JuWa23}.

From a numerical point of view, designing stable and accurate finite element schemes for these models is quite challenging. Cross-diffusion terms can easily cause instabilities if not treated properly, requiring effective stabilization methods and a careful choice of function spaces. Moreover, proving the convergence of numerical solutions demands careful and precise estimates that take into account the complex structure of  \( \textbf{M}(\textbf{u}) \). While structure-preserving numerical methods have been explored for similar problems \cite{EMP21, JuLi24, JuLi25, JuWa23},   the error analysis and rigorous convergence proofs for CH cross-diffusion models remain relatively unexplored.  
This paper aims to address these challenges by developing a stabilized finite-element scheme for a specific CH cross-diffusion model. We provide rigorous error estimates and establish the convergence of numerical solutions, leveraging the underlying energy structure of the model. Our analysis not only deepens the theoretical understanding of cross-diffusion systems but also provides practical insights for designing reliable numerical methods in this setting.  
\vspace{-0.5cm}

\subsection{Main Contributions and Overview}
\vspace{-0.3cm}

This work presents a stabilized finite element scheme for a Cahn--Hilliard-type cross-diffusion model characterized by strongly coupled mobilities, nonlinear diffusion, and fourth-order coupling terms. These features pose substantial analytical and numerical challenges, primarily due to the destabilizing nature of cross-diffusion and the absence of standard structural properties such as maximum principles or classical energy dissipation. While structure-preserving numerical methods have been proposed for related models~\cite{EMP21, JuLi24, JuLi25, JuWa23}, rigorous convergence analysis and error estimates for cross-diffusion systems remain scarce.

To address these challenges, our main contributions are as follows:
\begin{enumerate}[(i)]
	\item We develop a fully discrete, stabilized finite element method based on backward Euler time discretization. The existence of discrete solutions is established using Brouwer’s fixed-point theorem, following the approach in~\cite[Section 4.4]{GKT22}.
	
	\item By employing the model’s variational energy structure, we derive rigorous error estimates and prove error bounds of the numerical solution (see Theorem~4.1). This enhances the theoretical understanding of cross-diffusion systems and supports the reliability of the proposed method.
	
	\item A central novelty of this work is the convergence analysis, which is based on the estimate $\sum_{k=0}^n\tau\|\nabla(\cdot)\|_{L^{\frac{6}{5}}}^{\frac{4}{3}}$, used to control the chemical potentials. This departs from the classical estimate $\sum_{k=0}^n\tau\|\nabla(\cdot)\|_{L^{2}}^{2}$ typically employed in Cahn--Hilliard-type equations. The analysis also reveals a structural connection between the cross-diffusion model and the standard Cahn--Hilliard equation, highlighting the robustness and effectiveness of our numerical scheme.
	
	\item Our analysis generalizes the framework introduced in~\cite{CaSh18} for standard Cahn--Hilliard models. A major difficulty in our setting arises from the cross-diffusion terms, which preclude uniform-in-time $L^2$ bounds for $\nabla \mu^{n+1}$ and complicate control of inverse Laplacian error norms.
	
	\item Building on the foundational ideas of~\cite{CE1992}, we derive a novel a priori estimate for the discrete chemical potential in the space $L^{\frac{4}{3}}(0,T; W^{1,\frac{6}{5}}(\Omega))$. This compensates for the lack of a standard $L^2(0,T; H^1(\Omega))$ estimate and facilitates convergence analysis under mild regularity assumptions on the initial data $(\phi_0, c_0)$.
	
	\item We perform two-dimensional numerical simulations that validate the theoretical results and demonstrate the scheme’s capability to capture phase separation dynamics consistent with classical Cahn--Hilliard behavior.
\end{enumerate}
Overall, this work advances the analytical and computational framework for cross-diffusion systems by providing a mathematically rigorous and numerically stable finite element approach. The techniques and insights developed here lay the groundwork for further investigation of complex multiphase systems governed by nonlinear and degenerate dynamics.

The rest of this paper is organized as follows. One fully discrete finite-element numerical scheme is proposed and the energy dissipation stability is proven in Section \ref{sec-en}. The existence of the solution to the numerical scheme is proven in Section \ref{e-ns}. The rigorous error analysis with error estimates is carried out in Section \ref{sec-er} and the convergence of the numerical solution to the solution of the continuous problem is proven in Section \ref{sec-can}. Some numeric results are given in Section \ref{sec-nr}, and conclusion is stated in Section \ref{sec-con}. In general, the second-order derivative of the energy density $F$ unbounded, and $g(c)$ may lead to a degenerate diffusion matrix, adding further complexity. However, structure-preserving schemes can be proposed \cite{JuWa23}. We left the numerical analysis of these cases as a topic for future work.

\section{Finite element scheme and energy estimate}\label{sec-en}
\vspace{-0.45cm}

To advance toward the finite element algorithm, we now outline the finite element discretization over a computational domain $\Omega \subset \mathbb{R}^d\;(d=2,3)$ with boundary $\partial \Omega$.

\textbf{Finite Element Space Construction.}  
Let $\mathcal{T}_h$ be a conforming triangulation of $\Omega$ into non-overlapping triangles $K$ such that:
\begin{itemize}
	\item [(i)] $\overline{\Omega} = \bigcup_{K \in \mathcal{T}_h} K$,
	\item [(ii)] for any $K_1, K_2 \in \mathcal{T}_h$, either $K_1 \cap K_2 = \emptyset$, or their intersection is a common vertex or edge (i.e., no hanging nodes),
	\item [(iii)] the mesh size is defined by $h = \max_{K \in \mathcal{T}_h} h_K$, where $h_K$ is the diameter of triangle $K$.
\end{itemize}
\vspace{-0.15cm}


Based on this triangulation, we define the conforming finite element space ${\cal X}_h$ as
\begin{eqnarray*}
\X_h :=\{\psi_h\in C(\bar\Omega):\;\;\psi_h|_K\in P_l(K), \;\;\forall\; K\in T_h\}\subset H^1(\Omega).
\end{eqnarray*}
where $P_l(K)$ denotes the space of polynomials of degree at most $l$ on $K$.

Next, we assume the finite element space $\X_h$ satisfies the following approximation property \cite{PG}: for all $v \in H^s(\Omega) \cap H_0^1(\Omega)$ with $1 \leq s \leq l$, there exists a constant $C > 0$ independent of $h$ such that
\begin{equation*}
	\inf_{\psi \in \X_h} \left\{ \|v - \psi\| + h \|\nabla(v - \psi)\| \right\} \leq C h^s \|v\|_{H^s(\Omega)}.
\end{equation*}


Next, we define  $P_{\X_h}$ is the $L_2$ projection onto $\X_h$ as:
\begin{equation*}
	\int_{\Omega}\left(u - P_{\X_h}u\right)v_hdx = 0,\;\; \forall u\in L^2(\Omega),\, v_h\in X_h.
\end{equation*}
For any $\xi,\eta\in \X_h$, denote the $L_2$ inner product of $\xi,\eta$ on $\Omega$ by $\langle\xi,\eta\rangle$ with the $L^2$ norm $\Vert\xi\Vert =\langle\xi,\xi\rangle^{\frac12}$, and denote the $H^1$ norm of $\xi$ by $\Vert\xi\Vert_{H^1} =(\Vert\xi\Vert^2+\Vert\na\xi\Vert^2)^{\frac12}$.  We employ finite element scheme to solve the system \eqref{1.mu} and \eqref{1.ic} with the homogeneous Neumann boundary conditions.


Let $T > 0$ be the final time, and divide the time interval $[0, T]$ uniformly into $N$ subintervals with time step size $\tau = T/N$. The discrete time points are defined by
$
0 = t_0 < t_1 < \cdots < t_N = T, \quad \text{where } t_n := n\tau \text{ for } n = 0, 1, \ldots, N.
$
For a continuous function $v : [0, T] \rightarrow L^2(\Omega)$, we define the discrete value at time $t_n$ as
$
v^n := v(\cdot, t_n).
$
The backward difference quotient at time $t_n$ is given by
\[
\delta_t v^n := \frac{v^n - v^{n-1}}{\tau}.
\]
We now introduce the following stabilized numerical scheme:  
Given $(\phi^n, c^n) \in \X_h^2$ for $0 \leq n \leq N-1$, find $(\phi^{n+1}, c^{n+1}, \mu^{n+1}) \in {\X}_h^3$ such that the following system holds for all test functions $(\xi, \eta, \sigma) \in {\X}_h^3$:
\begin{align}
	\left\langle \frac{\phi^{n+1}-\phi^n}{\tau},\xi \right\rangle =& -\left\langle \na\mu^{n+1}-c^n\na\big(c^{n+1}-\phi^n\big),\na\xi\right\rangle,\label{2.phi}\\
	\left\langle \frac{c^{n+1}-c^n}{\tau},\eta\right\rangle  =& \left\langle c^n\na\mu^{n+1}-(c^n)^2\na\big(c^{n+1}-\phi^n\big),\na\eta\right\rangle -\left\langle \na\big(c^{n+1}-\phi^n\big),\na\eta\right\rangle,\label{2.c} \\
	\langle\mu^{n+1},\sigma\rangle =& \left\langle \na\phi^{n+1},\na\sigma\right\rangle +\frac1{\varepsilon^2}\left\langle f(\phi^{n})+S(\phi^{n+1}-\phi^n),\sigma\right\rangle  - \left\langle c^{n+1},\sigma\right\rangle.  \label{2.mu}
\end{align}
Here, $S$ is a stabilization parameter introduced to ensure the numerical stability of the scheme. The initial data $(\phi^0, c^0)$ are assumed to be sufficiently accurate and satisfy the following estimate:
\vspace{-0.4cm}
\begin{align}
	\left\| \phi^0 - \phi_0 \right\|_{H^1(\Omega)} + \left\| c^0 - c_0 \right\|_{H^1(\Omega)} \leq C(h^l + \tau), \label{er-in}
\end{align}
\vspace{-0.25cm}
for some constant $C > 0$ independent of $h$ and $\tau$.
\begin{lemma}[{\em \bf Discrete energy inequality}]\label{lem.Edelta}
Let $S > \frac{L}{2}$ for some constant $L > 0$. For each time step $n = 0, 1, \ldots, N-1$, suppose the numerical solution $(\phi^n, c^n) \in {\X}_h^2$ and the updated solution $(\phi^{n+1}, c^{n+1}, \mu^{n+1}) \in {\X}_h^3$ are given by the stabilized scheme \eqref{2.phi}--\eqref{2.mu}. Then, the following discrete energy inequality holds unconditionally:
	\begin{align*}
		\delta_t E^{n+1} 
		\leq& -\tau \left\| \nabla \mu^{n+1} - c^n \nabla (c^{n+1} - \phi^n) \right\|^2 
		- \tau \left\| \nabla (c^{n+1} - \phi^n) \right\|^2 
		- \frac{S}{\varepsilon^2} \left\| \delta_t \phi^{n+1} \right\|^2,
	\end{align*}
	where the discrete energy functional is defined by
	\[
	E^n := \frac{1}{2} \left\| \nabla \phi^n \right\|^2 + \frac{1}{\varepsilon^2} \left\langle F(\phi^n), 1 \right\rangle + \frac{1}{2} \left\| c^n \right\|^2 - \left\langle \phi^n, c^n \right\rangle.
	\]
	Moreover, the scheme preserves mass at each time step in the following sense:
	\[
	\left\langle \phi^{n+1}, 1 \right\rangle = \left\langle \phi^n, 1 \right\rangle, \quad 
	\left\langle c^{n+1}, 1 \right\rangle = \left\langle c^n, 1 \right\rangle.
	\]
\end{lemma}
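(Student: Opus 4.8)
The plan is to run the standard discrete-energy argument, adapted to the cross-diffusion coupling. The ingredients are: testing the three relations \eqref{2.phi}--\eqref{2.mu} with well-chosen discrete functions; converting the quadratic parts of $E^n$ with the polarization identity $\tfrac12\norm{a}^2-\tfrac12\norm{b}^2=\langle a-b,a\rangle-\tfrac12\norm{a-b}^2$; and a pointwise Taylor expansion $F(\phi^{n+1})-F(\phi^n)=f(\phi^n)(\phi^{n+1}-\phi^n)+\tfrac12 f'(\zeta)(\phi^{n+1}-\phi^n)^2$, whose remainder is controlled by $|f'|\le L$ from \eqref{apf-2}.

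First I would expand $E^{n+1}-E^n$ termwise. Polarizing $\tfrac12\norm{\na\phi^{n+1}}^2$ and $\tfrac12\norm{c^{n+1}}^2$ and splitting the mixed term as $\langle\phi^{n+1},c^{n+1}\rangle-\langle\phi^n,c^n\rangle=\langle\phi^{n+1}-\phi^n,c^{n+1}\rangle+\langle\phi^n,c^{n+1}-c^n\rangle$, all increments of $\phi$ collect into the combination $\langle\na\phi^{n+1},\na(\phi^{n+1}-\phi^n)\rangle+\varepsilon^{-2}\langle f(\phi^n),\phi^{n+1}-\phi^n\rangle-\langle c^{n+1},\phi^{n+1}-\phi^n\rangle$, while all increments of $c$ collapse to $\langle c^{n+1}-\phi^n,c^{n+1}-c^n\rangle$. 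The $\phi$-combination is exactly what \eqref{2.mu} evaluates for the test function $\sigma=\phi^{n+1}-\phi^n$: it equals $\langle\mu^{n+1},\phi^{n+1}-\phi^n\rangle-\varepsilon^{-2}S\norm{\phi^{n+1}-\phi^n}^2$. At this point $E^{n+1}-E^n$ has been reduced to $\langle\mu^{n+1},\phi^{n+1}-\phi^n\rangle+\langle c^{n+1}-\phi^n,c^{n+1}-c^n\rangle$, together with the term $-\tfrac{2S-L}{2\varepsilon^2}\norm{\phi^{n+1}-\phi^n}^2$ (nonpositive precisely because $S>L/2$) and the nonpositive leftovers $-\tfrac12\norm{\na(\phi^{n+1}-\phi^n)}^2$ and $-\tfrac12\norm{c^{n+1}-c^n}^2$ coming from polarization.

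Next I would dispose of the two surviving inner products via \eqref{2.phi}, \eqref{2.c} and an algebraic cancellation. With the shorthand $j^{n+1}:=\na\mu^{n+1}-c^n\na(c^{n+1}-\phi^n)$, taking $\xi=\mu^{n+1}$ in \eqref{2.phi} gives $\langle\phi^{n+1}-\phi^n,\mu^{n+1}\rangle=-\tau\langle j^{n+1},\na\mu^{n+1}\rangle$, and taking $\eta=c^{n+1}-\phi^n$ in \eqref{2.c} gives $\langle c^{n+1}-c^n,c^{n+1}-\phi^n\rangle=\tau\langle c^n j^{n+1},\na(c^{n+1}-\phi^n)\rangle-\tau\norm{\na(c^{n+1}-\phi^n)}^2$. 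Adding these, the cross-diffusion pieces reassemble: $-\langle j^{n+1},\na\mu^{n+1}\rangle+\langle c^n j^{n+1},\na(c^{n+1}-\phi^n)\rangle=-\langle j^{n+1},j^{n+1}\rangle=-\norm{j^{n+1}}^2$, which is the coupled analogue of the classical $-\norm{\na\mu^{n+1}}^2$ Cahn--Hilliard dissipation and is exactly the first square in the statement, while the plain Laplacian term in \eqref{2.c} supplies the second, $-\tau\norm{\na(c^{n+1}-\phi^n)}^2$. Substituting back, discarding the nonpositive leftovers, rewriting $\norm{\phi^{n+1}-\phi^n}=\tau\norm{\delta_t\phi^{n+1}}$ in the stabilization term, and dividing by $\tau$ then yields the asserted energy inequality. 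Mass conservation is immediate: the choices $\xi\equiv1$ in \eqref{2.phi} and $\eta\equiv1$ in \eqref{2.c} annihilate both right-hand sides, since every term there pairs a flux with $\na(\text{constant})=0$.

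I expect the only genuinely nonroutine point to be the bookkeeping in the third step: the cross-diffusion contributions collapse to the single perfect square $\norm{j^{n+1}}^2$ only with the somewhat unusual test function $\eta=c^{n+1}-\phi^n$ in \eqref{2.c} (rather than $\eta=\mu^{n+1}$ or $\eta=c^{n+1}$), and one must then check that this same choice turns the plain Laplacian term $-\langle\na(c^{n+1}-\phi^n),\na\eta\rangle$ of \eqref{2.c} into honest dissipation $-\tau\norm{\na(c^{n+1}-\phi^n)}^2$ rather than a sign-indefinite quantity. Everything else is elementary algebra with the discrete relations; no summation by parts, inverse inequalities, or properties of the continuous solution are needed.
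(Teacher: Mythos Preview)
Your proposal is correct and follows essentially the same route as the paper: the key test-function choices $\xi=\mu^{n+1}$ in \eqref{2.phi}, $\eta=c^{n+1}-\phi^n$ in \eqref{2.c}, and $\sigma=\delta_t\phi^{n+1}$ in \eqref{2.mu}, together with the algebraic collapse of the cross-diffusion terms into the single square $\norm{j^{n+1}}^2$, are exactly what the paper uses. The only cosmetic difference is that you begin by expanding $E^{n+1}-E^n$ and then identify the needed test functions, whereas the paper declares the test functions first and then recognizes the energy increment; the underlying computation is the same.
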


\begin{proof} 
To establish the energy stability, we begin by selecting specific test functions in the scheme \eqref{2.phi}--\eqref{2.c}. Choose $\xi = \mu^{n+1} \in \X_h$ in \eqref{2.phi} and $\eta = c^{n+1} - \phi^n \in \X_h$ in \eqref{2.c}. This choice is motivated by the observation that the auxiliary term $h_c(\phi^n, c^{n+1}) = c^{n+1} - \phi^n$ is linear in both arguments, and hence belongs to the finite element space $\X_h$.
	
	Substituting these test functions into the respective equations and summing the resulting expressions, we obtain:
	\begin{align}
		&\frac{1}{\tau}\left\langle\delta_t\phi^{n+1},\mu^{n+1} \right\rangle
		+ \frac{1}{\tau}\left\langle\delta_t c^{n+1},c^{n+1}-\phi^n\right\rangle\label{2.aux} \\
		&= -\left\langle\na\mu^{n+1}-c^n\na(c^{n+1}-\phi^n)
		,\na\mu^{n+1} \right\rangle + \left\langle c^n\big(\na\mu^{n+1}-c^n
		\na(c^{n+1}-\phi^n)\big),\na(c^{n+1}-\phi^n)\right\rangle
		\nonumber \\
		&\phantom{xx}- \Vert\na(c^{n+1}-\phi^n)\Vert^2 = -\Vert\na\mu^{n+1}-c^n\na(c^{n+1}-\phi^n)\Vert^2
		- \Vert c^n\na(c^{n+1}-\phi^n)\Vert^2. \nonumber
	\end{align}
	Choosing the test function $\sigma=\frac1{\tau}\delta_t \phi^{n+1}$ in \eqref{2.mu} yields
	\begin{align}
		\frac1\tau\left\langle\mu^{n+1},\delta_t \phi^{n+1}\right\rangle = \frac1\tau\left\langle\na\phi^{n+1},\na\delta_t \phi^{n+1}\right\rangle +\frac1{\tau\varepsilon^2}\left\langle f(\phi^{n})+S\delta_t \phi^{n+1},\delta_t \phi^{n+1}\right\rangle  - \frac1\tau\left\langle c^{n+1},\delta_t \phi^{n+1}\right\rangle,\nonumber
	\end{align}
	which, together with the mass conservation for $c^n$, yields to
	\begin{align}
		&\frac{1}{\tau}\left\langle\mu^{n+1},\delta_t\phi^{n+1} \right\rangle
		+ \frac{1}{\tau}\left\langle \delta_t c^{n+1},c^{n+1}-\phi^n\right\rangle\label{2.aux2} \\
		&= \frac{1}{\tau}\left\langle\na\phi^{n+1},
		\na(\delta_t\phi^{n+1})\right\rangle + \frac{1}{\tau}\left\langle\frac{1}{\eps^2}
		f(\phi^{n})+\frac{S}{\eps^2}\delta_t \phi^{n+1}- c^{n+1},\delta_t\phi^{n+1}\right\rangle\noN\\&+ \frac{1}{\tau}\left\langle
		c^{n+1}-c^n,c^{n+1}-\phi^n\right\rangle \nonumber \\
		&\ge \frac{1}{2\tau}\big(\Vert\na\phi^{n+1}\Vert^2-\Vert\na\phi^n\Vert^2+\Vert\na(\delta_\phi^{n+1})\Vert^2\big)+ \frac{1}{\tau}\left\langle\frac1{\eps^2} F(\phi^{n+1}) + h(\phi^{n+1},c^{n+1}),1\right\rangle\nonumber\\
		&\phantom{xx}- \frac{1}{\tau}\left\langle\frac1{\eps^2} F(\phi^{n}) +h(\phi^{n},c^{n}),1\right\rangle
		+\frac{1}{\eps^2\tau}\left(S-\frac L2\right)\Vert \delta_t \phi^{n+1}\Vert ^2.\nonumber
	\end{align}
The energy stability follows directly from the combination of equations \eqref{2.aux} and \eqref{2.aux2}.
To verify mass conservation, we test equation \eqref{2.phi} with $\xi = 1$ and equation \eqref{2.c} with $\eta = 1$. This choice leads to the preservation of the discrete mass in both $\phi$ and $c$ variables, thereby completing the proof.
\end{proof}

In addition, the following fundamental estimates hold uniformly with respect to $h > 0$ and $\tau > 0$.
\begin{lemma}[{\em\bf Uniform a priori estimates}]\label{lemma-est}
	Let $\overline{\mu^{n+1}} := \frac{1}{|\Omega|} \int_\Omega \mu^{n+1}(x)\,dx = |\Omega|^{-1} \left\langle \mu^{n+1}, 1 \right\rangle$. Assume that the initial data $\phi_0, c_0 \in H^1(\Omega)$, and that assumptions \eqref{apf-1}--\eqref{apf-2} as well as the initial estimate \eqref{er-in} are satisfied. Then, there exists a constant $C > 0$ independent of $h$ and $\tau$, such that the following estimates hold:
	\begin{align}
		&\|\nabla \phi^{n+1}\|^2 + \|\phi^{n+1}\|^2 + \|c^{n+1}\|^2 + \sum_{k=0}^{n} \|\delta_t c^{k+1}\|^2 + \sum_{k=0}^{n} \|\nabla(\delta_t \phi^{k+1})\|^2 \nonumber\\
		&\quad + \sum_{k=0}^{n} \tau \|\nabla \mu^{k+1} - c^k \nabla(c^{k+1} - \phi^k)\|^2 + \sum_{k=0}^{n} \tau \|\nabla(c^{k+1} - \phi^k)\|^2 \leq C, \label{basic-1} \\
		&|\overline{\mu^{n+1}}| \leq C\left(\|f(\phi^n)\|^2 + 1\right) \leq C. \label{basic-2a}
	\end{align}
\end{lemma}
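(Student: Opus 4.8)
The plan is to read off all the bounds in \eqref{basic-1} from a single summation of the discrete energy inequality of Lemma~\ref{lem.Edelta}, together with a lower (coercivity) bound for the discrete energy $E^n$, and to handle the mean $\overline{\mu^{n+1}}$ in \eqref{basic-2a} separately by testing \eqref{2.mu} against the constant function. First I would sum over $k=0,\dots,n$ the full energy identity obtained in the proof of Lemma~\ref{lem.Edelta}, retaining --- besides the flux dissipation $\tau\|\nabla\mu^{k+1}-c^k\nabla(c^{k+1}-\phi^k)\|^2$ and $\tau\|\nabla(c^{k+1}-\phi^k)\|^2$ --- also the non-negative quadratic remainders of the discrete product rule and of the Taylor expansion of $F$ that produce the $\delta_t\phi$- and $\delta_t c$-contributions in \eqref{basic-1} (here $S>\tfrac{L}{2}$ keeps the stabilization remainder non-negative). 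Since $\sum_k\delta_t E^{k+1}$ telescopes, this yields an inequality of the form $E^{n+1}+(\text{all dissipation sums})\le E^0$, so it only remains to bound $E^0$ from above and $E^{n+1}$ from below by $\|\nabla\phi^{n+1}\|^2+\|\phi^{n+1}\|^2+\|c^{n+1}\|^2$ up to an additive constant.

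For $E^0\le C$: the initial estimate \eqref{er-in} and $\phi_0,c_0\in H^1(\Omega)$ give $\|\phi^0\|_{H^1}+\|c^0\|_{H^1}\le C$, and since \eqref{apf-2} implies the quadratic growth $F(s)\le C(1+s^2)$ (from $F(s)=F(0)+\int_0^s f(r)\,dr$ and $|f(r)|\le|f(0)|+L|r|$), we get $\langle F(\phi^0),1\rangle\le C(1+\|\phi^0\|^2)\le C$ and hence $E^0\le C$. For the lower bound on $E^{n+1}$ I would invoke \eqref{apf-1}, i.e.\ $\tfrac{1}{\varepsilon^2}\langle F(\phi^{n+1}),1\rangle\ge\tfrac{K_1}{\varepsilon^2}\|\phi^{n+1}\|^2-\tfrac{K_2|\Omega|}{\varepsilon^2}$ with $\tfrac{K_1}{\varepsilon^2}>1$, and absorb the sign-indefinite coupling term by Young's inequality, $|\langle\phi^{n+1},c^{n+1}\rangle|\le\tfrac{a}{2}\|\phi^{n+1}\|^2+\tfrac{1}{2a}\|c^{n+1}\|^2$, with $a$ chosen in the interval $(1,\,2K_1/\varepsilon^2)$ --- which is possible precisely because $K_1>\varepsilon^2$ forces $2K_1/\varepsilon^2>2$. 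This leaves strictly positive coefficients on $\|\nabla\phi^{n+1}\|^2$, $\|\phi^{n+1}\|^2$ and $\|c^{n+1}\|^2$, so $E^{n+1}\ge c_\ast\big(\|\nabla\phi^{n+1}\|^2+\|\phi^{n+1}\|^2+\|c^{n+1}\|^2\big)-C$; combined with $E^{n+1}\le E^0\le C$ and with the dissipation sums, this yields \eqref{basic-1}.

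For \eqref{basic-2a} I would test \eqref{2.mu} with $\sigma=1$: the Dirichlet term drops, and the mass conservation from Lemma~\ref{lem.Edelta} gives $\langle\phi^{n+1}-\phi^n,1\rangle=0$, hence $\langle\mu^{n+1},1\rangle=\tfrac{1}{\varepsilon^2}\langle f(\phi^n),1\rangle-\langle c^{n+1},1\rangle$. Cauchy--Schwarz and Young's inequality then give $|\overline{\mu^{n+1}}|\le C(\|f(\phi^n)\|^2+1)$ once $\|c^{n+1}\|\le C$ from \eqref{basic-1} is used; and $|f(\phi^n)|\le|f(0)|+L|\phi^n|$ together with $\|\phi^n\|\le C$ then bounds $\|f(\phi^n)\|^2$, so $|\overline{\mu^{n+1}}|\le C$.

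The step I expect to be the genuine obstacle is the coercivity of $E^{n+1}$: the coupling term $-\langle\phi^{n+1},c^{n+1}\rangle$ is not sign-definite, so without the structural hypothesis $K_1>\varepsilon^2$ the discrete energy need not be bounded below, and no a priori control of $\|\phi^{n+1}\|$, $\|c^{n+1}\|$ --- hence none of the flux sums --- would follow; the argument is tight, consuming exactly the excess $\tfrac{K_1}{\varepsilon^2}-1>0$ in the quadratic coefficient. A related subtlety --- the reason \eqref{basic-1} is phrased in terms of the combined flux $\nabla\mu^{k+1}-c^k\nabla(c^{k+1}-\phi^k)$ rather than $\nabla\mu^{k+1}$ alone, and why only the mean of $\mu^{n+1}$ is obtained at this stage --- is that splitting the flux would cost a factor $\|c^k\|_{L^\infty}$ that is not yet available; quantitative control of $\mu^{n+1}$ itself is postponed to the weaker $L^{\frac{4}{3}}(0,T;W^{1,\frac{6}{5}}(\Omega))$-type estimate established later in the error analysis.
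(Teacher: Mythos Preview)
Your proposal is correct and follows essentially the same route as the paper: sum the sharpened energy inequality (the identity behind Lemma~\ref{lem.Edelta}, keeping the quadratic remainders $\|\nabla(\delta_t\phi^{k+1})\|^2$ and $\|\delta_t c^{k+1}\|^2$ from the discrete product rule) to telescope to $E^{n+1}+(\text{dissipation sums})\le E^0$, then invoke \eqref{apf-1} for coercivity and bound $E^0$ by the initial data; for \eqref{basic-2a}, test \eqref{2.mu} with $\sigma=1$ and use \eqref{apf-2} together with the already established bound $\|\phi^n\|\le C$. Your treatment of the coercivity step --- the explicit Young parameter $a\in(1,\,2K_1/\varepsilon^2)$, exploiting $K_1>\varepsilon^2$ --- is more detailed than the paper's, which simply writes ``Using assumption \eqref{apf-1} and the initial bounds''; the underlying argument is identical.
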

		
\begin{proof} Using \eqref{2.aux} and \eqref{2.aux2}, we have
	\begin{align}
		&\frac{1}{2}\big(\Vert\nabla \phi^{n+1}\Vert^2-\Vert\nabla \phi^n\Vert^2+\Vert\nabla (\delta_t\phi^{n+1})\Vert^2\big)+ \frac{1}{\eps^2}\langle F(\phi^{n+1}),1\rangle\noN\\& + \frac{1}{2}\Vert c^{n+1}\Vert^2- \langle\phi^{n+1},c^{n+1}\rangle- \frac{1}{\eps^2}\langle F(\phi^n),1\rangle\notag- \frac{1}{2}\Vert c^n\Vert^2+ \langle\phi^n,c^n\rangle + \frac{1}{2}\Vert \delta_t c^{n+1}\Vert^2\noN\\&+\tau\Vert\nabla \mu^{n+1}-c^n\nabla (c^{n+1}-\phi^n)\Vert^2 +\tau\Vert\nabla (c^{n+1}-\phi^n)\Vert^2\leq0.\nonumber
	\end{align}
By replacing the superscript with $k$ and summing over $k = 0$ to $n$, we obtain
	\begin{align}
		&\frac{1}{2}\Vert\nabla \phi^{n+1}\Vert^2+\frac{1}{2}\sum_{k=0}^n\Vert\nabla (\delta_t\phi^{k+1})\Vert^2+ \frac{1}{\eps^2}\langle F(\phi^{n+1}),1\rangle + \frac{1}{2}\Vert c^{n+1}\Vert^2- \langle\phi^{n+1},c^{n+1}\rangle\nonumber \\
		&+\sum_{k=0}^n\tau\Vert\nabla \mu^{k+1}-c^n\nabla (c^{k+1}-\phi^k)\Vert^2 +\sum_{k=0}^n\tau\Vert\nabla (c^{k+1}-\phi^k)\Vert^2+ \frac12\sum_{k=0}^{n}\Vert \delta_t c^{k+1}\Vert^2\notag\\
		&\leq \Vert\nabla \phi^0\Vert^2+ \frac{1}{\eps^2}\langle F(\phi^0),1\rangle + \Vert c^0\Vert^2- \langle\phi^0,c^0\rangle . \nonumber
	\end{align}
	Using assumption \eqref{apf-1} and the initial bounds, we establish the estimate \eqref{basic-1}.
	In addition, taking $\sigma = 1$ in \eqref{2.mu} and using the assumptions \eqref{apf-2}, we have
	\begin{align}
		&\vert\overline{\mu^{n+1}}\vert\leq C(\Vert f(\phi^{n})\Vert^2+1) \leq C(\Vert \phi^{n}\Vert^2+1),\nonumber
	\end{align} which, together with $\Vert \phi^n\Vert\le C$ in \eqref{basic-1}, gives the bound \eqref{basic-2a}.
\end{proof}
\vspace{-0.76cm}
\section{The existence of solution to the numerical scheme}\label{e-ns}
\vspace{-0.4cm}

The existence of a solution to the numerical scheme \eqref{2.phi}--\eqref{2.mu} is nontrivial due to the presence of cross-diffusion terms. Inspired by the approach in \cite[Section 4.4]{GKT22}, we construct a suitable mapping and invoke the Brouwer fixed-point theorem to guarantee the existence of solutions. The uniqueness of the solution can be established via energy-based arguments, following techniques similar to those presented in \cite{JuWa23}.
\begin{lemma}
	Assume that the parameters satisfy $K_1 + 2S > L + 2\varepsilon^2$ and $2S > L > 0$. Let $0 < \tau \leq 1$ and $0 < h \leq 1$ be such that $\frac{\tau}{h} \leq C$ for some constant $C > 0$. Given $(\phi^n, c^n) \in \X_h^2$, there exists a solution $(\phi^{n+1}, c^{n+1}, \mu^{n+1}) \in \X_h^3$ to the discrete system \eqref{2.phi}--\eqref{2.mu}.
\end{lemma}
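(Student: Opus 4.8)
The plan is to apply Brouwer's fixed-point theorem on the finite-dimensional space $\X_h^3$, following the strategy of \cite[Section 4.4]{GKT22}. Since $(\phi^n,c^n)$ is fixed, I would first reformulate the scheme \eqref{2.phi}--\eqref{2.mu} as a search for a zero of a continuous map $\mathcal{G}:\X_h^3\to\X_h^3$ defined via the Riesz representation of the residuals: for $(\phi,c,\mu)\in\X_h^3$, let $\langle \mathcal{G}(\phi,c,\mu),(\xi,\eta,\sigma)\rangle$ equal the difference of the left- and right-hand sides of the three equations (with $\phi^{n+1},c^{n+1},\mu^{n+1}$ replaced by $\phi,c,\mu$). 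A solution of the scheme is exactly a zero of $\mathcal{G}$, and $\mathcal{G}$ is continuous (indeed polynomial) in its arguments because $f$ is continuous. By the standard corollary of Brouwer's theorem, it suffices to exhibit a radius $R>0$ such that $\langle \mathcal{G}(v),v\rangle>0$ whenever $\|v\|_{\X_h^3}=R$, where $v=(\phi,c,\mu)$ and the norm is, say, $(\|\phi\|_{H^1}^2+\|c\|^2+\|\mu\|_{H^1}^2)^{1/2}$ or a weighted variant chosen to make the estimates close.

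The core of the argument is the coercivity estimate. I would test the reformulated system with $\xi=\mu$, $\eta=c-\phi^n$, and $\sigma=\frac1\tau(\phi-\phi^n)$ — the very same combination used in the proof of Lemma~\ref{lem.Edelta} — so that the destabilizing cross-diffusion terms cancel exactly and what survives is, up to the factor structure, a discrete energy difference. Concretely, summing the three tested residuals yields a lower bound of the form
\begin{align}
\langle\mathcal{G}(v),v\rangle \geq\ & \tfrac1{2\tau}\big(\|\nabla\phi\|^2-\|\nabla\phi^n\|^2\big) + \tfrac1{\varepsilon^2}\big(\langle F(\phi),1\rangle-\langle F(\phi^n),1\rangle\big) + \tfrac1{2\tau}\big(\|c\|^2-\|c^n\|^2\big) \notag\\
& + \tfrac1{\varepsilon^2}\Big(S-\tfrac L2\Big)\|\phi-\phi^n\|^2 + \|\nabla\mu-c^n\nabla(c-\phi^n)\|^2 + \|\nabla(c-\phi^n)\|^2 - (\text{lower order}),\notag
\end{align}
where the "lower order" pieces come from the $-\langle\phi,c\rangle$ coupling and the $\phi^n$ shift in $\eta$, both controllable by Young's inequality. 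Using \eqref{apf-1} to write $\langle F(\phi),1\rangle\geq K_1\|\phi\|^2-K_2|\Omega|$, the condition $K_1+2S>L+2\varepsilon^2$ guarantees the coercive quadratic terms dominate, so that $\langle\mathcal{G}(v),v\rangle\geq c_0\|v\|_{\X_h^3}^2 - c_1$ for positive constants depending only on the data at step $n$; taking $R$ large makes the right side positive on the sphere $\|v\|=R$. The bound $\tau/h\le C$ and $2S>L>0$ enter when one needs an inverse inequality to control $\|\mu\|$ (not just $\|\nabla\mu\|$ and $\overline\mu$) in terms of the quantities appearing on the right — since the tested identity only directly bounds $\|\nabla\mu-c^n\nabla(c-\phi^n)\|$, one recovers control of $\nabla\mu$ itself by absorbing $\|c^n\nabla(c-\phi^n)\|\le \|c^n\|_{L^\infty}\|\nabla(c-\phi^n)\|$ using the discrete $L^\infty$ bound on $c^n\in\X_h$ (which costs a factor depending on $h$), and then bounds the zero-mean part of $\mu$ by Poincaré and the mean $\overline\mu$ by testing with $\sigma=1$ as in \eqref{basic-2a}.

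The main obstacle, as flagged in the paper's introduction, is precisely the cross-diffusion term: unlike the standard Cahn--Hilliard case there is no clean a priori $L^2$ bound on $\nabla\mu$ independent of the mesh, so closing the coercivity estimate for the full norm including $\|\mu\|$ requires the mesh constraint $\tau/h\le C$ together with inverse estimates, and one must be careful that the constants $c_0,c_1$ — though allowed to depend on $h$, $\tau$, and $(\phi^n,c^n)$ — stay finite for the fixed step under consideration (existence, not uniform bounds, is all that is claimed here). I would handle this by first proving coercivity in the weaker norm $\|\phi\|_{H^1}^2+\|c\|^2$ that comes for free from the energy identity, deduce $\mu\in\X_h$ is then determined and bounded through \eqref{2.mu} by an inverse inequality, and assemble these into the required sphere condition; the remaining verifications (continuity of $\mathcal{G}$, the elementary Young-inequality absorptions) are routine.
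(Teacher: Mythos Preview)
Your overall strategy---Brouwer on $\X_h^3$ via a residual map, following \cite{GKT22}---matches the paper's, but there is a genuine gap in the coercivity step. Testing with $(\xi,\eta,\sigma)=(\mu,\,c-\phi^n,\,\tfrac1\tau(\phi-\phi^n))$ is not testing with $v=(\phi,c,\mu)$; what you are actually computing is $\langle\mathcal G(v),g(v)\rangle$ for the affine map $g(\phi,c,\mu)=(\mu,\,c-\phi^n,\,\tfrac1\tau(\phi-\phi^n))$. That is legitimate provided you verify the sphere condition in the $g(v)$-variable, which requires a lower bound containing $\|\mu\|^2$. But the energy identity you invoke produces only the dissipative terms $\|\nabla\mu-c^n\nabla(c-\phi^n)\|^2$ and $\|\nabla(c-\phi^n)\|^2$, neither of which controls $\|\mu\|$: take $\mu$ a large constant and $c=\phi^n$. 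Your proposed remedies---``determine $\mu$ through \eqref{2.mu}'' or ``bound $\overline\mu$ by testing with $\sigma=1$''---presuppose that the $\mu$-equation already holds, which is precisely what is to be proved; inside the fixed-point argument the residual $\mathcal G_3$ is nonzero and those identities are unavailable.

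The paper closes this gap by a small but essential perturbation of the third test function: instead of $\sigma=\phi_h/\tau$ it takes $\sigma=\phi_h/\tau-\tfrac{2\delta}{S^2}\mu_h$ for a small $\delta>0$ (equivalently, it applies Brouwer to $Y\circ g^{-1}$ with the invertible linear map $g(\phi,c,\mu)=(\mu,\,c,\,\phi/\tau-\tfrac{2\delta}{S^2}\mu)$). The extra $-\tfrac{2\delta}{S^2}\mu_h$, through the $-\mu_h\sigma$ term of the $\mu$-equation residual, generates a genuinely positive contribution $+\tfrac{2\delta}{S^2}\|\mu_h\|^2$, yielding coercivity in all three variables. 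The price is a new cross term $-\tfrac{2\delta}{S^2}\langle\nabla\phi_h,\nabla\mu_h\rangle$, and it is exactly here that the inverse inequality and the hypothesis $\tau/h\le C$ enter: after Young's inequality one is left with a negative term of order $\delta^2\tau h^{-1}\|\mu_h\|^2$, which is absorbed into $\tfrac{\delta}{S^2}\|\mu_h\|^2$ by taking $\delta$ small enough. So your proposal misplaces the role of $\tau/h\le C$: it is not used to \emph{recover} $\|\mu\|$ from other quantities, but to \emph{absorb} the bad gradient cross term created by the very perturbation that supplies the $\|\mu\|^2$ control in the first place.
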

\vspace{-0.35cm}
\begin{proof}
Following the approach of \cite{JuWa23}, we introduce the following inner product on the Hilbert space \(\X_h^3\):
\[
\langle(\phi_h, c_h, \mu_h), (\alpha_h, \beta_h, \gamma_h)\rangle 
:= \int_{\Omega} \left( \phi_h \alpha_h + c_h \beta_h + \mu_h \gamma_h \right) \, dx.
\]
Given the current time-step values \((\phi^n, c^n) \in \X_h^2\), we define the mapping 
$
Y : \X_h^3 \rightarrow \X_h^3
$
via the following relation:
	\begin{align*}
		\langle & Y(\phi_h,c_h,\mu_h),(\alpha_h,\beta_h,\gamma_h)\rangle= \int_{\Omega}\bigg(\frac{1}{\tau}(\phi_h-\phi^n)\alpha_h
		+ \big(\na\mu_h-c^n\na(c_h-\phi^n)\big)\cdot\na\alpha_h\bigg)dx \\
		&+ \int_{\Omega}\bigg\{\na\phi_h\cdot\na\gamma_h + \big(-\mu_h + \frac{1}{\eps^2}
		(f(\phi^n)+S(\phi_h-\phi^n))-c_h\big)\gamma_h\bigg\}dx \\
		&+ \int_{\Omega}\bigg(\frac{1}{\tau}(c_h-c^n)(\beta_h-\phi^n)
		- c^n\big(\na\mu_h-c^n\na(c_h-\phi^n)\big)\cdot
		\na(\beta_h-\phi^n)\bigg)dx
	\end{align*}
	for all test functions $(\alpha_h,\beta_h,\gamma_h)\in \X_h^3$. 
	
A zero of the mapping \(Y\) corresponds to a solution of the system \eqref{2.phi}--\eqref{2.mu}. To proceed via contradiction, we introduce a linear transformation 
$
g : \X_h^3 \rightarrow \X_h^3, \quad g(\phi_h, c_h, \mu_h) := \left( \mu_h, c_h, \frac{\phi_h}{\tau} - \frac{2\delta}{S^2} \mu_h \right),
$
where \(\delta \in (0, \tfrac{1}{2})\) is a small parameter to be determined later. The inverse of \(g\) is given by
\[
g^{-1}(\alpha_h, \beta_h, \gamma_h) := \left( \tau \left( \frac{2S^2}{\delta} \alpha_h + \gamma_h \right), \beta_h, \alpha_h \right).
\]
Furthermore, for a given radius \(R > 0\), define the closed ball
\[
B_R := \left\{ (\alpha_h, \beta_h, \gamma_h) \in \X_h^3 : \|(\alpha_h, \beta_h, \gamma_h)\|_2 \le R \right\},
\]
with the norm induced by the inner product:
$
\|(\alpha_h, \beta_h, \gamma_h)\|_2^2 := \langle (\alpha_h, \beta_h, \gamma_h), (\alpha_h, \beta_h, \gamma_h) \rangle.
$
	
Assume, for the sake of contradiction, that the continuous mapping \( Y \circ g^{-1} \) has no zeros in the closed ball \( B_R \subset \X_h^3 \). Following the strategy outlined in \cite[Section 4.4]{GKT22}, we apply Brouwer's fixed-point theorem to the auxiliary mapping
\[
G_R(\alpha_h, \beta_h, \gamma_h) := -R \frac{(Y \circ g^{-1})(\alpha_h, \beta_h, \gamma_h)}{\| (Y \circ g^{-1})(\alpha_h, \beta_h, \gamma_h) \|_2}, \quad \text{for } (\alpha_h, \beta_h, \gamma_h) \in B_R.
\]
This defines a continuous function from \( B_R \) into its boundary \( \partial B_R \), contradicting Brouwer’s fixed-point theorem. Therefore, a fixed point must exist in \( B_R \), i.e., there exists \((\alpha_h, \beta_h, \gamma_h) \in B_R\) such that
\[
G_R(\alpha_h, \beta_h, \gamma_h) = (\alpha_h, \beta_h, \gamma_h), \quad \text{with } \|G_R(\alpha_h, \beta_h, \gamma_h)\|_2 = \|(\alpha_h, \beta_h, \gamma_h)\|_2 = R.
\]

By the definition of \( g \), there exists some \( (\phi_h, c_h, \mu_h) \in \X_h^3 \) such that \( g(\phi_h, c_h, \mu_h) = (\alpha_h, \beta_h, \gamma_h) \). From the definition of \( g \), we have
$
\alpha_h = \mu_h, \quad \beta_h = c_h, \quad \gamma_h = \frac{\phi_h}{\tau} - \frac{2\delta}{S^2}\mu_h.
$
Substituting into the inner product, we write
\[
\langle Y(\phi_h, c_h, \mu_h), (\alpha_h, \beta_h, \gamma_h) \rangle = I_1 + I_2 + I_3,
\]
where the terms are grouped as follows:
\begin{align*}
	I_1 &= \int_{\Omega} \left( \left| \nabla \mu_h - c^n \nabla(c_h - \phi^n) \right|^2 + \frac{1}{\tau} |\nabla \phi_h|^2 + |\nabla(c_h - \phi^n)|^2 - \frac{2\delta}{S^2} \nabla \phi_h \cdot \nabla \mu_h \right) dx, \\
	I_2 &= \int_{\Omega} \left( -\frac{1}{\tau} \phi^n \mu_h - \frac{1}{\tau} c_h \phi_h + \frac{2\delta}{S^2}(\mu_h + c_h)\mu_h + \frac{1}{\tau}(c_h - c^n)(c_h - \phi^n) \right) dx, \\
	I_3 &= \frac{1}{\eps^2} \int_{\Omega} \left( f(\phi^n) + S(\phi_h - \phi^n) \right)\left( \frac{\phi_h}{\tau} - \frac{2\delta}{S^2} \mu_h \right) dx.
\end{align*}
As shown in \cite{JuWa23}, the terms \( I_1, I_2, I_3 \) can be estimated from below:
\begin{align*}
	I_1 &\ge \frac{1}{2\tau} \int_{\Omega} |\nabla \phi_h|^2 dx 
	+ \int_{\Omega} \left| \nabla \mu_h - c^n \nabla(c_h - \phi^n) \right|^2 dx
	+ \int_{\Omega} |\nabla(c_h - \phi^n)|^2 dx \noN\\&
	- 2\tau\delta^2 S^{-4} Ch^{-1} \int_{\Omega} |\mu_h|^2 dx, \\
	I_2 &\ge \int_{\Omega} \left( \frac{c_h^2}{4\tau} + \frac{\delta}{S^2} \left( \frac{3}{2} - \frac{4\delta \tau}{S^2} \right) \mu_h^2 \right) dx 
	- \frac{1}{\tau} \int_{\Omega} \phi_h^2 dx 
	- \frac{C}{\tau} \int_{\Omega} \left( \left( 1 + \frac{S^2}{\delta \tau} \right) |\phi^n|^2 + |c^n|^2 + 1 \right) dx, \\
	I_3 &\ge \frac{1}{2\eps^2 \tau} \left( K_1 + 2S - L - \frac{4\delta \tau}{\eps^2} \right) \int_{\Omega} \phi_h^2 dx 
	- \frac{\delta}{2S^2} \int_{\Omega} \mu_h^2 dx - C^n.
\end{align*}
Summing these estimates, we obtain
\begin{align*}
	\langle Y(\phi_h, c_h, \mu_h), (\alpha_h, \beta_h, \gamma_h) \rangle 
	&\ge \int_{\Omega} \bigg\{ \frac{1}{2\tau} |\nabla \phi_h|^2 
	+ \left| \nabla \mu_h - c^n \nabla(c_h - \phi^n) \right|^2 
	+ |\nabla(c_h - \phi^n)|^2 \\
	&\qquad + \frac{1}{2\eps^2\tau} \left( K_1 + 2S - L - 2\eps^2 - \frac{4\delta \tau}{\eps^2} \right) \phi_h^2 
	+ \frac{c_h^2}{4\tau} \\
	&\qquad + \frac{\delta}{S^2} \left( 1 - \frac{4\delta \tau}{S^2} - \frac{2C\delta \tau}{S^2 h} \right) \mu_h^2 \bigg\} dx - C^n.
\end{align*}
Choose \( \delta > 0 \) sufficiently small so that
\[
K_1 + 2S - L - 2\eps^2 - \frac{4\delta \tau}{\eps^2} > 0 
\quad \text{and} \quad 
1 - \frac{4\delta \tau}{S^2} - \frac{2C\delta \tau}{S^2 h} > 0,
\]
which is possible under the assumptions \( K_1 + 2S > L + 2\eps^2 \), \( S > 0 \), and \( \frac{\tau}{h} \le C \). Then, we obtain
\[
\langle Y(\phi_h, c_h, \mu_h), (\alpha_h, \beta_h, \gamma_h) \rangle 
\ge C_\delta \| (\phi_h, c_h, \mu_h) \|_2^2 - C^n,
\]
for some constant \( C_\delta > 0 \).

By equivalence of norms in finite-dimensional spaces, there exists \( C'_\delta > 0 \) such that
\[
\| (\phi_h, c_h, \mu_h) \|_2 \ge C'_\delta \| g(\phi_h, c_h, \mu_h) \|_2 
= C'_\delta \| (\alpha_h, \beta_h, \gamma_h) \|_2 = C'_\delta R.
\]
Thus, if \( R > 0 \) is sufficiently large, then
\begin{equation} \label{3.pos}
	\langle Y(\phi_h, c_h, \mu_h), (\mu_h, c_h, \phi_h/\tau - 2\delta S^{-2} \mu_h) \rangle 
	= \langle Y(\phi_h, c_h, \mu_h), (\alpha_h, \beta_h, \gamma_h) \rangle 
	> 0.
\end{equation}
On the other hand, since \( (\alpha_h, \beta_h, \gamma_h) = g(\phi_h, c_h, \mu_h) \in B_R \) is a fixed point of \( G_R \), we have
\[
G_R(g(\phi_h, c_h, \mu_h)) = -R \frac{Y(\phi_h, c_h, \mu_h)}{\| Y(\phi_h, c_h, \mu_h) \|_2},
\]
and hence,
\begin{align*}
	\langle Y(\phi_h, c_h, \mu_h), (\mu_h, c_h, \phi_h/\tau - 2\delta S^{-2} \mu_h) \rangle 
	&= -\frac{1}{R} \| Y(\phi_h, c_h, \mu_h) \|_2 
	\left\| (\mu_h, c_h, \phi_h/\tau - 2\delta S^{-2} \mu_h) \right\|_2^2\noN\\& \le 0,
\end{align*}
which contradicts \eqref{3.pos}. This contradiction implies that \( Y \circ g^{-1} \) must attain a zero in \( B_R \) for sufficiently large \( R > 0 \). Hence, the scheme \eqref{2.phi}--\eqref{2.mu} admits a solution.
\end{proof}
\vspace{-0.65cm}

\section{Error estimate}\label{sec-er}
\vspace{-0.25cm}

This section is devoted to deriving an error estimate for the fully discrete scheme \eqref{2.phi}--\eqref{2.mu}. Our approach builds upon the framework developed in \cite{CaSh18}, which provided error estimates for a fully discrete scheme associated with a Cahn--Hilliard phase-field model for two-phase incompressible flows. A major challenge in the current analysis stems from the presence of cross-diffusion terms in the system. These terms inhibit the derivation of a uniform $L^2$ bound for $\nabla \mu^{n+1}$ with respect to $n$, $h > 0$, and $\tau > 0$, as well as bounds on the inverse Laplacian of the error, in contrast to the standard Cahn--Hilliard model.

We consider spatial dimensions $d = 1, 2, 3$, and assume the following regularity conditions on the weak solutions of \eqref{1.mu}:
\begin{align}
	(\phi,c) &\in (L^\infty([0,T]; H^{1+l}(\Omega)))^2, \quad \partial_t(\phi,c) \in (L^\infty([0,T]; H^{1+l}(\Omega)))^2, \nonumber\\
	\partial_{tt}(\phi,c) &\in (L^\infty([0,T]; L^2(\Omega)))^2, \quad \mu \in L^\infty([0,T]; H^{1+l}(\Omega)). \label{re-reg}
\end{align}
Here, $l \geq 1$ when $d = 1$, and $l > d/2$ when $d = 2$ or $3$.

Furthermore, for a sequence of discrete functions $u^k \in K_s$, defined for $k = 0, 1, \ldots, N$ with $K_s$ denoting a suitable Sobolev space, we introduce the following discrete norms that will be used throughout the analysis:
\begin{equation*}
	\Vert u^k \Vert_{L^\infty(K_s)} = \max_{0 \leq k \leq N} \Vert u^k \Vert_{K_s}, \quad 
	\Vert u^k \Vert_{L^p(K_s)} = \left( \tau \sum_{k=0}^N \Vert u^k \Vert_{K_s}^p \right)^{1/p}, \quad p > 1.
\end{equation*}
Denote the weak solutions of \eqref{1.mu} by $\phi(t),\,c(t),\,\mu(t)$. We define $\phi_h(t),\,c_h(t),\,\mu_h(t)$ by the following projection
\begin{equation}\label{1.pro}
	\left\{
	\begin{array}{lr}
		\begin{aligned}
			& \langle\nabla \phi_h,\nabla \xi\rangle = \langle\nabla \phi,\nabla \xi\rangle,\,\forall\xi\in \X_h,\,\langle\phi_h-\phi,1\rangle=0;\\
			& \langle\nabla c_h,\nabla \eta\rangle = \langle\nabla c,\nabla \eta\rangle,\,\forall\eta\in \X_h,\,\langle c_h-c,1\rangle=0;\\
			& \langle\nabla \mu_h,\nabla \sigma\rangle = \langle\nabla \mu,\nabla \sigma\rangle,\,\forall\sigma\in \X_h,\,\langle\mu_h-\mu,1\rangle=0.\\
		\end{aligned}
	\end{array}\right.
\end{equation}
According to the property of the projection defined by \eqref{1.pro}, the following inequality is satisfied for some $C>0$:
\begin{align}
	&\Vert\phi-\phi_h\Vert_{L^\infty([0,T];L^2(\Omega))} + h\Vert\phi-\phi_h\Vert_{L^\infty([0,T];H^1(\Omega))}\leq C h^{1+l}\Vert \phi\Vert_{L^\infty([0,T];H^{l+1}(\Omega))},\label{pr-reg-1}\\
	&\Vert c-c_h \Vert_{L^\infty([0,T];L^2(\Omega))}+h\Vert c-c_h \Vert_{L^\infty([0,T];H^1(\Omega))}\leq C h^{1+l}\Vert c\Vert_{L^\infty([0,T];H^{l+1}(\Omega))},\label{pr-reg-2}\\
	&\Vert\mu-\mu_h\Vert_{L^\infty([0,T];L^2(\Omega))}+h\Vert\mu-\mu_h\Vert_{L^\infty([0,T];H^1(\Omega))}\leq C h^{1+l}\Vert \mu\Vert_{L^\infty([0,T];H^{l+1}(\Omega))}.\label{pr-reg-3}
\end{align}
In addition, we introduce $L_h\xi$, the inverse Laplacian operator of $\xi\in \X_h$ in $\Omega$, by
\begin{equation}\label{def.iap}
	\left\{
	\begin{array}{lr}
		\begin{aligned}
			& 	\left\langle \nabla L_{h}\xi,\nabla  \eta\right\rangle = \left\langle \xi,\eta\right\rangle +\frac{1}{\vert\Omega\vert}\left\langle \xi,1\right\rangle\left\langle \eta,1\right\rangle,\, \forall\eta\in \X_h,\,\langle L_{h}\xi,1\rangle=0.
		\end{aligned}
	\end{array}\right.
\end{equation}
Now, we define the error functions
\begin{align} e_\phi^n= \phi^n - \phi_h(t_{n}),\,e_c^n= c^n - c_h(t_{n}),e_\mu^n= \mu^n - \mu_h(t_{n}).\label{err-fun}\end{align}
The main results of this section are as follows.
\begin{thm}\label{thm.od}
	Let $(\phi^n,c^n,\mu^n)$ be the solution to the numerical scheme \eqref{2.phi}-\eqref{2.mu} of the problem \eqref{1.mu}-\eqref{1.ic}. Let $S>0$, let $e_\phi^n, e_c^n, e_\mu^n$ be given by \eqref{err-fun}. Under assumptions \eqref{apf-1},\eqref{apf-2},\eqref{f-trc}, \eqref{er-in} and \eqref{re-reg},
	if the time step $\tau>0$ of discretization is small enough, the following error estimates can be established:
	\begin{align}
		\Vert e_\phi^n\Vert_{L^\infty(H^1(\Omega))}+\Vert e_c^n\Vert_{L^\infty(L^2(\Omega))}+\Vert \nabla  e_c^n\Vert_{L^2(L^2(\Omega))}\lesssim(\tau+h^{l}),\Vert \nabla e_\mu^n\Vert_{L^\frac43(L^\frac65(\Omega))}\lesssim\left(\tau+h^{l}\right)^\frac34 \label{est.main-S}
	\end{align}
	if $h^l\le\tau$, for some positive constant $C>0$ independent of $\tau>0$ and $h>0$.
	
	 Moreover, we have the optimal estimates:
	\begin{align}
		\Vert \phi^n - \phi(t_{n})\Vert_{L^\infty(H^1(\Omega))}+\Vert c^n - c(t_{n})\Vert_{L^\infty(L^2(\Omega))}+\Vert \nabla (c^n - c(t_{n}))\Vert_{L^2(L^2(\Omega))}\lesssim \tau+h^{l}.\label{est.main-SS}
	\end{align}
\end{thm}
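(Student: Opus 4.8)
## Proof Proposal

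\textbf{Overall strategy.} The plan is to carry out a standard energy-type error analysis for the fully discrete scheme, but with two essential modifications dictated by the cross-diffusion structure: (a) we cannot expect a uniform-in-$n$ bound on $\|\nabla\mu^{n+1}\|$, so the chemical potential error must be measured in the weaker $L^{4/3}(0,T;W^{1,6/5}(\Omega))$ norm rather than $L^2(0,T;H^1(\Omega))$; and (b) the inverse-Laplacian trick (testing the $\phi$-error equation against $L_h e_\phi^{n+1}$) must be combined carefully with the $\mu$-equation so that the ``bad'' cross-diffusion terms $c^n\nabla(c^{n+1}-\phi^n)$ are absorbed. First I would write down the error equations: subtract the projected weak formulation of \eqref{1.mu} (using the Ritz projections \eqref{1.pro}) from the scheme \eqref{2.phi}--\eqref{2.mu}, producing evolution equations for $e_\phi^{n+1}, e_c^{n+1}, e_\mu^{n+1}$ with consistency/truncation terms $R_\phi^{n+1}, R_c^{n+1}$ of order $O(\tau+h^{l})$ (using the regularity \eqref{re-reg} and the projection estimates \eqref{pr-reg-1}--\eqref{pr-reg-3}, plus Taylor expansion in time for the $\partial_t$ terms, which gives $O(\tau)$ from $\partial_{tt}(\phi,c)\in L^\infty(L^2)$).

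\textbf{Main energy estimate.} Next I would test the $e_\phi$-equation with $L_h e_\phi^{n+1}$ (to handle the fourth-order structure), the $e_c$-equation with $e_c^{n+1}$ (or with $e_c^{n+1}-e_\phi^{n}$ to mimic the energy-stability test functions in Lemma~\ref{lem.Edelta}), and the $e_\mu$-equation with $e_\phi^{n+1}$; then add. The parabolic terms produce telescoping quantities $\tfrac1{2\tau}(\|\nabla e_\phi^{n+1}\|^2-\|\nabla e_\phi^n\|^2)$-type expressions together with $\tfrac1{2\tau}(\|e_c^{n+1}\|^2-\|e_c^n\|^2)$ and a dissipation term controlling $\|\nabla e_c^{n+1}\|^2$; the $f$-term, handled via the truncation bound \eqref{apf-2} (Lipschitz $f$) and the stabilization $S$, contributes a term absorbed by choosing $S$ large as in Lemma~\ref{lem.Edelta}. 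The cross-diffusion contributions are split as $c^n\nabla(c^{n+1}-\phi^n)-c_h\nabla(c_h-\phi_h) = (c^n-c_h)\nabla(c^{n+1}-\phi^n) + c_h\nabla(e_c^{n+1}-e_\phi^n)$; the first is controlled using $L^\infty$-bounds on the exact solution and the $L^2(L^2)$ dissipation of $\nabla(c^{n+1}-\phi^n)$ from \eqref{basic-1}, the second is absorbed into the $\|\nabla e_c^{n+1}\|^2$ dissipation up to a term in $\|e_\phi^n\|^2$. One then sums over $n$ and applies discrete Gronwall (valid for $\tau$ small enough) to obtain the bounds on $\|e_\phi^n\|_{L^\infty(H^1)}$, $\|e_c^n\|_{L^\infty(L^2)}$, $\|\nabla e_c^n\|_{L^2(L^2)}$ stated in \eqref{est.main-S}.

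\textbf{The chemical-potential estimate.} The new ingredient, following the Elliott--Garcke-type idea attributed to \cite{CE1992}, is to recover a bound on $\nabla e_\mu$ in the weak norm. From the $\mu$-error equation $e_\mu^{n+1} = -\Delta_h e_\phi^{n+1} + \varepsilon^{-2}(\text{Lipschitz }f\text{-terms}) + \varepsilon^{-2}S\,\delta_t(\cdots) - e_c^{n+1}$, I would bound $\nabla e_\mu^{n+1}$ by testing appropriately and using the Gagliardo--Nirenberg/Sobolev embedding $H^1\hookrightarrow L^6$ (so its dual is $L^{6/5}$), together with the already-established $L^\infty(H^1)$ control of $e_\phi$ and the $\ell^2$-in-time control of $\delta_t e_\phi$ coming from \eqref{basic-1}. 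The exponent $4/3$ emerges from interpolating the available time-regularity of $\delta_t\phi^{n+1}$ against the worse terms; raising to the power $4/3$, multiplying by $\tau$, and summing yields $\sum_k\tau\|\nabla e_\mu^{k+1}\|_{L^{6/5}}^{4/3}\lesssim(\tau+h^l)$, hence the exponent $3/4$ on the right of \eqref{est.main-S}. Finally, \eqref{est.main-SS} follows by the triangle inequality, combining \eqref{est.main-S} with the projection estimates \eqref{pr-reg-1}--\eqref{pr-reg-2} and the initial-data estimate \eqref{er-in}; note $h^l$ dominates $h^{l+1}$.

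\textbf{Expected main obstacle.} The hardest part will be closing the Gronwall argument while controlling the cross-diffusion cross-terms: because no uniform $L^2$-bound on $\nabla\mu^{n+1}$ is available, one cannot directly estimate terms like $\langle (c^n-c_h)\nabla e_\mu^{n+1},\nabla(\cdot)\rangle$ in the naive way, and one must route everything through the dissipation quantity $\|\nabla\mu^{n+1}-c^n\nabla(c^{n+1}-\phi^n)\|^2$ from Lemma~\ref{lemma-est} and the weak $L^{4/3}(W^{1,6/5})$ norm. Ensuring the consistency terms are genuinely $O(\tau+h^l)$ in the right (weak) dual norms, and that the interpolation exponents match so that the discrete Gronwall constant stays independent of $\tau$ and $h$, is the delicate bookkeeping at the heart of the proof.
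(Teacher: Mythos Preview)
Your overall architecture (error equations, energy argument, Gr\"onwall, then recover the $\mu$-estimate) is right, and you correctly identify the central obstacle. But there are two concrete gaps in the mechanics.

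\emph{First, the test functions do not produce the claimed telescoping.} Testing the $e_\phi$-equation with $L_h e_\phi^{n+1}$ and the $e_\mu$-equation with $e_\phi^{n+1}$ yields, after the $\langle e_\mu^{n+1},e_\phi^{n+1}\rangle$ cancellation, a telescoping term in $\|\nabla L_h e_\phi^{n+1}\|^2$ (an $H^{-1}$-type quantity) together with $\|\nabla e_\phi^{n+1}\|^2$ appearing as \emph{dissipation}, i.e.\ only $L^2(H^1)$. That is strictly weaker than the $L^\infty(H^1)$ bound in \eqref{est.main-S}. The paper instead tests the $e_\phi$-equation with $e_\mu^{n+1}$, $e_\phi^{n+1}$, and $L_h(\delta_t e_\phi^{n+1})$ simultaneously, and the $e_\mu$-equation with $\delta_t e_\phi^{n+1}$. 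The last choice is what generates $\langle\nabla e_\phi^{n+1},\nabla(\delta_t e_\phi^{n+1})\rangle$ and hence the desired telescoping $\tfrac12(\|\nabla e_\phi^{n+1}\|^2-\|\nabla e_\phi^n\|^2)$; the matching term $\langle e_\mu^{n+1},\delta_t e_\phi^{n+1}\rangle$ cancels against the $\phi$-equation tested with $e_\mu^{n+1}$, which in turn is exactly what delivers the error-level dissipation $\|\nabla e_\mu^{n+1}-c^n\nabla(e_c^{n+1}-e_\phi^n)\|^2$ (mirroring Lemma~\ref{lem.Edelta}).

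\emph{Second, your route to the $\nabla e_\mu$ bound via the $\mu$-equation does not close.} Writing $e_\mu^{n+1}=-\Delta_h e_\phi^{n+1}+\cdots$ and then seeking $\|\nabla e_\mu^{n+1}\|_{L^{6/5}}$ would require control on $\nabla\Delta_h e_\phi^{n+1}$, i.e.\ third-order information on $e_\phi$, which is unavailable. The paper obtains the $L^{4/3}(W^{1,6/5})$ bound from the $\phi$-equation side: once the Gr\"onwall step gives $\sum_k\tau\|\nabla e_\mu^{k+1}-c^k\nabla(e_c^{k+1}-e_\phi^k)\|^2\lesssim(\tau+h^l)^2$ and $\sum_k\tau\|\nabla(e_c^{k+1}-e_\phi^k)\|^2\lesssim(\tau+h^l)^2$, one writes $\nabla e_\mu^{k+1}=(\nabla e_\mu^{k+1}-c^k\nabla(e_c^{k+1}-e_\phi^k))+c^k\nabla(e_c^{k+1}-e_\phi^k)$, bounds the second piece in $L^{6/5}$ via H\"older ($\|c^k\|_{L^3}\|\nabla(\cdot)\|_{L^2}$) and Young to the $4/3$ power, and uses the a priori bound $\sum_k\tau\|c^k\|_{H^1}^2\le C$; this is where the condition $h^l\le\tau$ and the exponent $3/4$ enter. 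Your instinct in the ``obstacle'' paragraph to route through the combined dissipation quantity is exactly right---but it must be done at the level of the \emph{error} quantities, and it is the source of the $\nabla e_\mu$ estimate, not the $\mu$-equation.
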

To complete the proof of Theorem \ref{thm.od}, we derive the equations satisfied by the error functions and the associated remainder terms. The error estimate is then established by employing a discrete Gr\"onwall inequality. The primary challenge in this analysis arises from the nonlinear cross-diffusion terms in the system. These terms prevent us from obtaining a uniform estimate of the form 
$
\tau \sum_{k=0}^n \|\nabla e_{\mu}^{k+1}\|^2,
$
for the error function $e_{\mu}^{n+1}$, uniformly with respect to $h > 0$ and $\tau > 0$. 

Nevertheless, this obstacle can be circumvented by deriving alternative bounds. Specifically, we establish uniform-in-$h$ and uniform-in-$\tau$ estimates for the following quantities:
\[
\tau \sum_{k=0}^n \left\|\nabla e_{\mu}^{k+1} - c^k \nabla(e_c^{k+1} - e_\phi^k)\right\|^2 
\quad \text{and} \quad 
\tau \left\|c^n \nabla(e_c^{n+1} - e_\phi^n)\right\|_{L^{\frac{6}{5}}}^{\frac{4}{3}},
\]
as shown in \eqref{basic-3-1} below.

We now derive the equations for the error functions. By applying a Taylor expansion and utilizing the identity \eqref{1.pro}, the continuous system \eqref{1.mu}--\eqref{1.ic} can be reformulated at time $t_{n+1}$ as follows:
\begin{align}\label{e1}
	\left\langle \frac{\phi_h(t_{n+1}) - \phi_h(t_n)}{\tau}, \xi \right\rangle 
&	= -\left\langle \nabla \mu_h(t_{n+1}) - c_h(t_n) \nabla \big(c_h(t_{n+1}) - \phi_h(t_n)\big), \nabla \xi \right\rangle \nonumber\\&
	+ \tilde{R}_\phi^{n+1}(\xi, \nabla \xi), \\
	\left\langle \frac{c_h(t_{n+1}) - c_h(t_n)}{\tau}, \eta \right\rangle  
	&= \left\langle c_h(t_n) \nabla \mu_h(t_{n+1}) - \big(c_h^2(t_n) + 1\big) \nabla \big(c_h(t_{n+1}) - \phi_h(t_n)\big), \nabla \eta \right\rangle \nonumber\\&
	+ \tilde{R}_c^{n+1}(\eta, \nabla \eta), \\
	\left\langle \mu_h(t_{n+1}), \sigma \right\rangle 
	&= \left\langle \nabla \phi_h(t_{n+1}), \nabla \sigma \right\rangle 
	+ \frac{1}{\varepsilon^2} \left\langle f\big(\phi_h(t_n)\big) + S \big(\phi_h(t_{n+1}) - \phi_h(t_n)\big), \sigma \right\rangle\nonumber \\
	&\quad - \left\langle c_h(t_{n+1}), \sigma \right\rangle 
	+ \tilde{R}_\mu^{n+1}(\sigma).
\end{align}
Here, the remainder terms $\tilde{R}_\phi^{n+1}(\xi, \nabla \xi)$, $\tilde{R}_c^{n+1}(\eta, \nabla \eta)$, and $\tilde{R}_\mu^{n+1}(\sigma)$ are defined by:
\begin{align}
	\tilde{R}_\phi^{n+1}(\xi, \nabla \xi) 
	&= \langle \frac{\phi_h(t_{n+1}) - \phi_h(t_n)}{\tau} - \phi_t(t_{n+1}), \xi\rangle \nonumber\\
	&\quad - \Big\langle c(t_{n+1}) \nabla \big(c(t_{n+1}) - \phi(t_{n+1})\big) 
	- c_h(t_n) \nabla \big(c_h(t_{n+1}) - \phi_h(t_n)\big), \nabla \xi \Big\rangle, \label{rem.til.phi} \\
	\tilde{R}_c^{n+1}(\eta, \nabla \eta) 
	&= \left\langle \frac{c_h(t_{n+1}) - c_h(t_n)}{\tau} - c_t(t_{n+1}), \eta \right\rangle \nonumber \\
	&\quad + \Big\langle c(t_{n+1}) \nabla \mu(t_{n+1}) - c^2(t_{n+1}) \nabla \big(c(t_{n+1}) - \phi(t_{n+1})\big) \nonumber\\
	&\qquad - \big(c_h(t_n) \nabla \mu_h(t_{n+1}) + c_h^2(t_n) \nabla \big(c_h(t_{n+1}) - \phi_h(t_n)\big)\big), \nabla \eta \Big\rangle \nonumber \\
	&\quad + \left\langle \nabla \phi(t_{n+1}) - \nabla \phi_h(t_n), \nabla \eta \right\rangle, \label{rem.til.c} \\
	\tilde{R}_\mu^{n+1}(\sigma) 
	&= \frac{1}{\varepsilon^2} \left\langle f\big(\phi(t_{n+1})\big) - f\big(\phi_h(t_n)\big) - S \big(\phi_h(t_{n+1}) - \phi_h(t_n)\big), \sigma \right\rangle \nonumber \\
	&\quad - \left\langle \mu(t_{n+1}) - \mu_h(t_{n+1}) + c(t_{n+1}) - c_h(t_{n+1}), \sigma \right\rangle. \label{rem.til.mu}
\end{align}
Moreover, we introduce the notation for backward differences: 
\[
\delta_t e_\phi^{n+1} := e_\phi^{n+1} - e_\phi^n, \quad \delta_t e_c^{n+1} := e_c^{n+1} - e_c^n.
\]
By subtracting the interpolation equations \eqref{e1} from the discrete numerical scheme \eqref{2.phi}--\eqref{2.c}, we obtain the following error equations:
\begin{align}
	&\frac{1}{\tau}\left\langle \delta_t e_\phi^{n+1},\xi \right\rangle = -\left\langle \nabla e_\mu^{n+1}-c^n\nabla (e_c^{n+1}-e_\phi^{n}),\nabla \xi\right\rangle -\tilde R^{n+1}_\phi\big(\xi,\nabla \xi\big)+R^{n+1}_\phi(\nabla \xi),\label{e1.phi}\\
	&\frac{1}{\tau}\left\langle \delta_t e_c^{n+1},\eta\right\rangle  = \left\langle c^n\nabla e_\mu^{n+1}-(c^{n2}+1)\nabla (e_c^{n+1}-e_\phi^{n}),\nabla \eta\right\rangle-\tilde R^{n+1}_c(\eta,\nabla \eta) +R^{n+1}_c(\nabla \eta),\label{e1.c}\\
	&\langle e_\mu^{n+1},\sigma\rangle = \left\langle \nabla e_\phi^{n+1},\nabla \sigma\right\rangle  - \left\langle e_c^{n+1},\sigma\right\rangle  + \frac S{\varepsilon^2}(e_\phi^{n+1}-e_\phi^n,\sigma) -\tilde R^{n+1}_\mu(\sigma) +R^{n+1}_\mu(\sigma),\label{e1.mu}
\end{align}
where $R^{n+1}_\phi(\nabla \xi),\,R^{n+1}_c(\nabla \eta)$ and $R^{n+1}_\mu(\sigma)$ are also the remainders defined as follows:
\begin{align}
	&R^{n+1}_\phi(\nabla \xi) 
	=- \left\langle -e_c^{n}\nabla \big(c_h(t_{n+1})-\phi_h(t_n)\big),\nabla \xi\right\rangle,\label{rem.phi}\\
	&R^{n+1}_c(\nabla \eta)
	=\big\langle e_c^{n}\nabla \mu_h(t_{n+1})-e_c^n(c^n+c_h(t_n))\nabla \big(c_h(t_{n+1})-\phi_h(t_n)\big),\nabla \eta\big\rangle\label{rem.c}\\
	&R^{n+1}_\mu(\sigma)= \frac1{\varepsilon^2}\left\langle f(\phi^n)-f(\phi_h(t_n)),\sigma\right\rangle.\label{rem.mu}
\end{align}

\begin{lemma}\label{lem.avg}
	Let $\overline{\delta_t e_\phi^{n+1}} = \vert\Omega\vert^{-1}\langle\delta e_\phi^{n+1},1\rangle $ and $\overline{ e_\mu^{n+1}} =  \vert\Omega\vert^{-1}\langle e_\mu^{n+1},1\rangle$. We have
	\begin{equation}
		\vert\overline{\delta_t e_\phi^{n+1}}\vert\leq C\tau(\tau+ h^{1+l}),\quad\vert\overline{e_\phi^{n+1}}\vert+\vert\overline{e_c^{n+1}}\vert\leq C(\tau+ h^{l}),\quad\vert\overline{ e_\mu^{n+1}}\vert\leq C(\tau+ h^{l} +\Vert e_\phi^n\Vert).\label{av-1}
	\end{equation}
\end{lemma}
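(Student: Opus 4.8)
The plan is to test each of the three error equations \eqref{e1.phi}--\eqref{e1.mu} against the constant function $1$. Since $\nabla 1=0$, every gradient pairing—in particular the troublesome flux term $\langle\nabla e_\mu^{n+1}-c^n\nabla(e_c^{n+1}-e_\phi^n),\nabla\xi\rangle$—drops out, and the surviving terms are controlled using the mean-zero constraints built into the projection \eqref{1.pro}, the discrete mass conservation of Lemma~\ref{lem.Edelta} together with the analogous continuous identity, the Lipschitz bound \eqref{apf-2} on $f$, and first-order Taylor expansions in time.

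First, for $\overline{\delta_t e_\phi^{n+1}}$: choosing $\xi=1$ in \eqref{e1.phi} leaves $\langle\delta_t e_\phi^{n+1},1\rangle=-\tau\tilde R^{n+1}_\phi(1,0)$, where by \eqref{rem.til.phi} one has $\tilde R^{n+1}_\phi(1,0)=\frac1\tau\langle\phi_h(t_{n+1})-\phi_h(t_n),1\rangle-\langle\phi_t(t_{n+1}),1\rangle$. Subtracting the mean constraint $\langle\phi_h(t)-\phi(t),1\rangle=0$ of \eqref{1.pro} at $t_{n+1}$ and at $t_n$ replaces $\phi_h$ by $\phi$ here, so $\tilde R^{n+1}_\phi(1,0)=\langle\tfrac{\phi(t_{n+1})-\phi(t_n)}{\tau}-\phi_t(t_{n+1}),1\rangle$, which a Taylor estimate bounds by $C\tau\Vert\partial_{tt}\phi\Vert_{L^\infty(L^2)}$ using \eqref{re-reg}; hence $\vert\overline{\delta_t e_\phi^{n+1}}\vert\le C\tau^2\le C\tau(\tau+h^{1+l})$. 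Next, for $\overline{e_\phi^{n+1}}$ and $\overline{e_c^{n+1}}$: combining the discrete conservation $\langle\phi^{n+1},1\rangle=\langle\phi^0,1\rangle$, $\langle c^{n+1},1\rangle=\langle c^0,1\rangle$ of Lemma~\ref{lem.Edelta}, the projection identities $\langle\phi_h(t)-\phi(t),1\rangle=\langle c_h(t)-c(t),1\rangle=0$, and the continuous conservation $\langle\phi(t_{n+1}),1\rangle=\langle\phi_0,1\rangle$, $\langle c(t_{n+1}),1\rangle=\langle c_0,1\rangle$ (obtained by integrating \eqref{1.mu} over $\Omega$ and using \eqref{1.bn}), we get $\langle e_\phi^{n+1},1\rangle=\langle\phi^0-\phi_0,1\rangle$ and $\langle e_c^{n+1},1\rangle=\langle c^0-c_0,1\rangle$, so that Cauchy--Schwarz and the initial estimate \eqref{er-in} yield $\vert\overline{e_\phi^{n+1}}\vert+\vert\overline{e_c^{n+1}}\vert\le C(\tau+h^l)$; alternatively one may telescope $\langle e_\phi^{n+1},1\rangle=\langle e_\phi^0,1\rangle+\sum_{k=0}^n\langle\delta_t e_\phi^{k+1},1\rangle$ and sum the $O(\tau^2)$ bounds over the $N=T/\tau$ steps.

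Finally, for $\overline{e_\mu^{n+1}}$: taking $\sigma=1$ in \eqref{e1.mu} removes $\langle\nabla e_\phi^{n+1},\nabla 1\rangle$, so $\langle e_\mu^{n+1},1\rangle=-\langle e_c^{n+1},1\rangle+\tfrac S{\varepsilon^2}\langle\delta_t e_\phi^{n+1},1\rangle-\tilde R^{n+1}_\mu(1)+R^{n+1}_\mu(1)$. The first two terms are $O(\tau+h^l)$ by the preceding step. By \eqref{rem.mu} and \eqref{apf-2}, $\vert R^{n+1}_\mu(1)\vert=\tfrac1{\varepsilon^2}\vert\langle f(\phi^n)-f(\phi_h(t_n)),1\rangle\vert\le C\Vert e_\phi^n\Vert$. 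In $\tilde R^{n+1}_\mu(1)$ from \eqref{rem.til.mu}, the pairings $\langle\mu(t_{n+1})-\mu_h(t_{n+1}),1\rangle$ and $\langle c(t_{n+1})-c_h(t_{n+1}),1\rangle$ vanish by \eqref{1.pro}, and the remaining term $\tfrac1{\varepsilon^2}\langle f(\phi(t_{n+1}))-f(\phi_h(t_n))-S(\phi_h(t_{n+1})-\phi_h(t_n)),1\rangle$ is split into $f(\phi(t_{n+1}))-f(\phi(t_n))$ (Lipschitz plus Taylor in time, $O(\tau)$), $f(\phi(t_n))-f(\phi_h(t_n))$ (Lipschitz plus the projection bound \eqref{pr-reg-1}, $O(h^{1+l})$), and $S(\phi_h(t_{n+1})-\phi_h(t_n))$ (bounded by $C(\tau+h^{1+l})$ via \eqref{re-reg} and \eqref{pr-reg-1}), so $\vert\tilde R^{n+1}_\mu(1)\vert\le C(\tau+h^{1+l})$. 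Collecting terms and dividing by $\vert\Omega\vert$ gives $\vert\overline{e_\mu^{n+1}}\vert\le C(\tau+h^l+\Vert e_\phi^n\Vert)$.

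The computation is essentially mechanical; the only genuine points of care are, first, to verify that testing against $1$ truly eliminates every gradient pairing, so that one is never forced to control $\nabla e_\mu^{n+1}$ (for which, as emphasized after \eqref{re-reg}, no uniform $L^2$ bound is available), and second, to accept that $\overline{e_\mu^{n+1}}$ cannot be closed purely in terms of the data: the nonlinear remainder $R^{n+1}_\mu(1)$ forces the $\Vert e_\phi^n\Vert$ dependence appearing in \eqref{av-1}, which is harmless since it will later be absorbed by the discrete Gr\"onwall step in the proof of Theorem~\ref{thm.od}.
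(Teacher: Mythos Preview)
Your proof is correct and follows essentially the same strategy as the paper: test each error equation \eqref{e1.phi}--\eqref{e1.mu} against the constant $1$, so that all gradient pairings vanish, and then control the surviving scalar remainders via the mean-zero constraints in \eqref{1.pro}, the initial bound \eqref{er-in}, Taylor expansion in time, and the Lipschitz bound \eqref{apf-2}. The only cosmetic differences are that you exploit $\langle\phi_h(t)-\phi(t),1\rangle=0$ directly (yielding the slightly sharper $\vert\overline{\delta_t e_\phi^{n+1}}\vert\le C\tau^2$) whereas the paper invokes \eqref{pr-reg-1} and picks up an $h^{1+l}$ contribution, and that for $\vert\overline{e_c^{n+1}}\vert$ the paper telescopes via \eqref{e1.c} while you appeal to discrete and continuous mass conservation; both routes are equivalent here.
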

\begin{proof}
	Choosing $\xi =1 $ in \eqref{e1.phi}, we have
	\begin{align}
		&\vert\overline{\delta_t e_\phi^{n+1}}\vert = \frac1{\vert\Omega\vert}\vert(\delta_t e_\phi^{n+1},1) \vert=\frac{\tau}{\vert\Omega\vert}\vert(-\tilde R^{n+1}_\phi(1,0)+R^{n+1}_\phi(0))\vert\nonumber\\&=\frac{\tau}{\vert\Omega\vert}\left\vert\left\langle \frac{\phi_h(t_{n+1})-\phi_h(t_{n})}{\tau} - \phi_t(t_{n+1}),1\right\rangle\right\vert\nonumber\\
		&=\frac{\tau}{\vert\Omega\vert}\left\langle \frac{\phi_h(t_{n+1})-\phi_h(t_{n})-\delta_t \phi(t_{n+1})}{\tau} + \frac{\delta_t\phi(t_{n+1})}{\tau} - \phi_t(t_{n+1}),1\right\rangle.\nonumber
	\end{align}
	Using \eqref{pr-reg-1}, taking the partial derivative of \eqref{1.pro} with respect to $t$, we have
	\begin{align}
		&\vert\overline{\delta_t e_\phi^{n+1}}\vert=\frac{\tau}{\vert\Omega\vert}\left\vert\left\langle \frac{\phi_h(t_{n+1})-\phi_h(t_{n})}{\tau} - \phi_t(t_{n+1}),1\right\rangle\right\vert \leq C\tau(\tau+ h^{1+l}).\label{phi.av}
	\end{align} And, hence, we have \begin{align}\vert\overline{e_\phi^{n+1}}\vert\leq C(\tau+ h^{l}),\, \vert\overline{e_c^{n+1}}\vert\leq C(\tau+ h^{l}).\label{ephi-ec}\end{align}
	Also, similarly, taking $\sigma = 1$ in \eqref{e1.mu}, we have
	\begin{align}
		\vert \overline{ e_\mu^{n+1}}\vert  &= \frac1{\vert\Omega\vert}\vert(e_\mu^{n+1},1)\vert \leq \frac1{\vert\Omega\vert}\vert (e_c^{n+1},1)-\tilde R^{n+1}_\mu(1,0) +R^{n+1}_\mu(1)\vert +\frac S{\varepsilon^2}\vert \overline{\delta_t e_\phi^{n+1}}\vert \label{mu.av}\\
		&\leq\frac{\tau}{\vert\Omega\vert}\sum_{k=0}^{n}\vert  (-\tilde R^{k+1}_c(1,0) + R^{k+1}_c(0))\vert +\frac{1}{\vert\Omega\vert}\vert (\tilde R^{n+1}_\mu(1)-R^{n+1}_\mu(1))\vert +\frac S{\varepsilon^2}\vert \overline{\delta_t e_\phi^{n+1}}\vert \notag\\
		&\leq C(\tau+ h^{l} +\Vert e_\phi^n\Vert)\notag.
	\end{align} Combining \eqref{phi.av}, \eqref{ephi-ec} and \eqref{mu.av}, we get \eqref{av-1}.
\end{proof}

Next, choosing $\xi=e_\mu^{n+1}, e_\phi^{n+1}, L_h(\delta_t e_\phi^{n+1})$ in \eqref{e1.phi}, we have
\begin{align}
	\frac1{\tau}\left\langle \delta_t e_\phi^{n+1},e_\mu^{n+1} \right\rangle =&-\left\langle \nabla e_\mu^{n+1}-c^n\nabla (e_c^{n+1}-e_\phi^{n}),\nabla e_\mu^{n+1}\right\rangle -\tilde R_\phi^{n+1}(e_\mu^{n+1},\nabla e_\mu^{n+1})+R^{n+1}_\phi(e_\mu^{n+1}),\label{mu.n1}\\
	\frac1{\tau}\left\langle\delta_t e_\phi^{n+1},e_\phi^{n+1} \right\rangle =&-\left\langle\nabla e_\mu^{n+1}-c^n\nabla (e_c^{n+1}-e_\phi^{n}),\nabla e_\phi^{n+1}\right\rangle-\tilde R^{n+1}_\phi(e_\phi^{n+1},\nabla e_\phi^{n+1})+R^{n+1}_\phi(e_\phi^{n+1})\label{phi.n0}
\end{align}
and
\begin{align}
	\frac1{\tau}\left\langle \delta_t e_\phi^{n+1},L_h(\delta_t e_\phi^{n+1}) \right\rangle 
	&=-\left\langle \nabla e_\mu^{n+1}-c^n\nabla (e_c^{n+1}-e_\phi^{n}),\nabla L_h(\delta_t e_\phi^{n+1})\right\rangle\label{phi.n-1}\\
	&\phantom{xx} -\tilde R_\phi(L_h(\delta_t e_\phi^{n+1}),\nabla L_h(\delta_t e_\phi^{n+1}))+R_\phi(L_h(\delta_t e_\phi^{n+1})).\notag
\end{align}

Similarly, taking $\eta = e_c^{n+1} -e_\phi^n$, and $\sigma = \delta_t e_\phi^n$, we have
\begin{align}
	\frac1{\tau}\left\langle \delta_t e_c^{n+1},e_c^{n+1}-e_\phi^n\right\rangle  =& \left\langle c^n\nabla e_\mu^{n+1}-(c^n)^2\nabla (e_c^{n+1}-e_\phi^{n}),\nabla (e_c^{n+1}-e_\phi^n)\right\rangle -\Vert\nabla (e_c^{n+1}-e_\phi^{n})\Vert^2\notag\\
	& -\tilde R_c^{n+1}(e_c^{n+1}-e_\phi^n, \nabla (e_c^{n+1}-e_\phi^n)) +R_c^{n+1}(e_c^{n+1}-e_\phi^n),\label{c.n0}\\
	\left\langle \nabla e_\phi^{n+1},\nabla (\delta_t e_\phi^{n+1})\right\rangle =& \langle e_\mu^{n+1},\delta_t e_\phi^{n+1}\rangle + \left\langle e_c^{n+1},\delta_t e_\phi^{n+1}\right\rangle - \frac S{\varepsilon^2}\Vert\delta_t e_\phi^{n+1}\Vert^2\notag\\
	& +\tilde R_\mu^{n+1}(\delta_t e_\phi^{n+1}) -R_\mu^{n+1}(\delta_t e_\phi^{n+1}),\label{phi.n1}
\end{align}

An inequality of the error functions can be established now.

 By taking $\tau\cdot$\eqref{mu.n1}$+\tau\cdot$
\eqref{phi.n0}$+$\eqref{phi.n-1}$+\tau\cdot$\eqref{c.n0}$+$\eqref{phi.n1} and canceling some terms concerning $\left\langle\delta_t e_\phi^{n+1},e_\mu^{n+1} \right\rangle$, with the help of \eqref{def.iap}, we have
\begin{align}\label{3.est.f}
	&\frac{1}2(\Vert e_\phi^{n+1}\Vert^2-\Vert e_\phi^n\Vert^2+\Vert\delta_t e_\phi^{n+1}\Vert^2)+\frac1{2\tau}\Vert \nabla L_h(\delta_t e_\phi^{n+1})\Vert^2+\frac{1}{2}(\Vert e_c^{n+1}\Vert^2-\Vert e_c^n\Vert^2+\Vert\delta_t e_c^{n+1}\Vert^2)\notag\\
	&+\left\langle e_c^n,e_\phi^n\right\rangle+\frac{1}2(\Vert \nabla  e_\phi^{n+1}\Vert^2-\Vert \nabla e_\phi^n\Vert^2+\Vert\nabla (\delta_t e_\phi^{n+1})\Vert^2)+\tau\Vert\nabla (e_c^{n+1}-e_\phi^{n})\Vert^2\\
	&\leq -\frac{\tau}4\Vert \nabla e_\mu^{n+1}- c^n\nabla (e_c^{n+1}-e_\phi^{n})\Vert^2+C\tau\|\nabla e_\phi^{n+1}\|^2-\frac S{\varepsilon^2}\Vert\delta_t e_\phi^{n+1}\Vert^2+\left\langle e_c^{n+1},e_\phi^{n+1}\right\rangle  \notag\\
	&\phantom{xx}{}-\tau\tilde R^{n+1}_\phi(e_\phi^{n+1},\nabla e_\phi^{n+1})+\tau R^{n+1}_\phi(e_\phi^{n+1})-{\tau}\tilde R^{n+1}_\phi(e_\mu^{n+1},\nabla e_\mu^{n+1})+{\tau} R^{n+1}_\phi(\nabla e_\mu^{n+1})\nonumber\\
	&\phantom{xx}{}-\tilde R_\phi(L_h(\delta_t e_\phi^{n+1}),\nabla L_h(\delta_t e_\phi^{n+1}))+R_\phi(L_h(\delta_t e_\phi^{n+1}))\notag\\
	&\phantom{xx}{}-\tau\tilde R^{n+1}_c(e_c^{n+1}-e_\phi^n,\nabla (e_c^{n+1}-e_\phi^n)) +\tau R^{n+1}_c(\nabla(e_c^{n+1}-e_\phi^n))+\tilde R^{n+1}_\mu(\delta_t e_\phi^{n+1}) -R^{n+1}_\mu(\delta_t e_\phi^{n+1}).\nonumber
\end{align}
To estimate the remainder $-\tau\tilde R^{n+1}_\phi(e_\mu^{n+1},\nabla  e_\mu^{n+1})$ on the right-hand side of \eqref{3.est.f}, we need establish the following Lemma because we can not get the uniform bound for $\tau\|\nabla e_{\mu}^{n+1}\|^2$ in $h>0$ and $\tau>0$ due to the nonlinear cross diffusion of the error system.
\begin{lemma}\label{lemma-est-1} Assume that $\phi_0,c_0\in H^1(\Omega)$ and let the assumptions \eqref{apf-1}-\eqref{apf-2} and the initial bound \eqref{er-in} hold. Then there exists some positive constant $C$ such that \begin{align}
		&\kappa\tau\Vert c^n\nabla(e_c^{n+1}-e_\phi^{n})\Vert^\frac43_{L^\frac65}\le C\kappa^3\tau \Vert c^n\Vert_{H^1}^2+\tau\Vert\nabla(e_c^{n+1}-e_\phi^{n})\Vert^2\label{basic-3-1}\end{align} for any positive constant $\kappa$.
\end{lemma}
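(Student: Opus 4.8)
The plan is to control the $L^{6/5}$ norm of $c^n \nabla(e_c^{n+1}-e_\phi^n)$ by splitting off the factor $c^n$ via Hölder's inequality and then absorbing the resulting gradient term using Young's inequality. First I would write, by Hölder with exponents chosen so that $\tfrac56 = \tfrac13 + \tfrac12$,
\[
\Vert c^n \nabla(e_c^{n+1}-e_\phi^n)\Vert_{L^{6/5}} \le \Vert c^n\Vert_{L^3}\,\Vert \nabla(e_c^{n+1}-e_\phi^n)\Vert_{L^2}.
\]
In dimensions $d=1,2,3$ the Sobolev embedding $H^1(\Omega)\hookrightarrow L^3(\Omega)$ gives $\Vert c^n\Vert_{L^3}\le C\Vert c^n\Vert_{H^1}$, so that
\[
\Vert c^n \nabla(e_c^{n+1}-e_\phi^n)\Vert_{L^{6/5}}^{4/3} \le C\,\Vert c^n\Vert_{H^1}^{4/3}\,\Vert \nabla(e_c^{n+1}-e_\phi^n)\Vert_{L^2}^{4/3}.
\]

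Next I would apply Young's inequality to the product $\Vert c^n\Vert_{H^1}^{4/3}\,\Vert \nabla(e_c^{n+1}-e_\phi^n)\Vert^{4/3}$ with the conjugate exponents $3$ and $3/2$, so that the gradient term appears to the power $\tfrac43\cdot\tfrac32 = 2$. This produces, after multiplying through by $\kappa\tau$ and choosing the Young parameter to make the gradient coefficient exactly $1$,
\[
\kappa\tau\,\Vert c^n \nabla(e_c^{n+1}-e_\phi^n)\Vert_{L^{6/5}}^{4/3} \le C\kappa^3\tau\,\Vert c^n\Vert_{H^1}^{2} + \tau\,\Vert \nabla(e_c^{n+1}-e_\phi^n)\Vert^{2},
\]
which is precisely \eqref{basic-3-1}. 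The bookkeeping of the exponent $\kappa^3$ comes from the fact that the Young parameter needed to normalize the coefficient of the gradient term is proportional to $\kappa$, and it enters the other term with the conjugate power cubed.

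The only delicate point is the Sobolev embedding step: one needs $H^1(\Omega)\hookrightarrow L^3(\Omega)$, which holds for $d\le 3$ (it is the critical exponent $2^*=6$ only in $d=3$, and $L^3$ is safely subcritical), so no further regularity on $c^n$ beyond the $H^1$ bound already available from Lemma~\ref{lemma-est} is required; in particular the estimate is uniform in $h$ and $\tau$. I do not expect any genuine obstacle here — the lemma is essentially an exercise in Hölder plus Young, and its role in the larger argument is to trade the problematic term $\tau\tilde R^{n+1}_\phi(e_\mu^{n+1},\nabla e_\mu^{n+1})$, which cannot be bounded through a uniform $\tau\Vert\nabla e_\mu^{n+1}\Vert^2$ estimate, against the controllable quantities $\tau\Vert\nabla(e_c^{n+1}-e_\phi^n)\Vert^2$ and $\tau\Vert c^n\Vert_{H^1}^2$ (the latter being bounded by \eqref{basic-1}).
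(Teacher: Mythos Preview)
Your H\"older step and the overall plan are fine, but there is an arithmetic slip in the Young step that is not cosmetic. After the Sobolev embedding you have
\[
\kappa\tau\,C\,\Vert c^n\Vert_{H^1}^{4/3}\,\Vert\nabla(e_c^{n+1}-e_\phi^n)\Vert^{4/3},
\]
and Young with conjugate exponents $(3,3/2)$ sends the first factor to $\bigl(\Vert c^n\Vert_{H^1}^{4/3}\bigr)^3=\Vert c^n\Vert_{H^1}^{4}$, not $\Vert c^n\Vert_{H^1}^{2}$. So what your argument actually proves is
\[
\kappa\tau\,\Vert c^n\nabla(e_c^{n+1}-e_\phi^n)\Vert_{L^{6/5}}^{4/3}\le C\kappa^3\tau\,\Vert c^n\Vert_{H^1}^{4}+\tau\,\Vert\nabla(e_c^{n+1}-e_\phi^n)\Vert^{2},
\]
which is strictly weaker than \eqref{basic-3-1}. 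The discrepancy matters downstream: in the proof of Theorem~\ref{thm.od} this term is summed over $k$ and closed using $\tau\sum_k\Vert c^k\Vert_{H^1}^2\le C$. Lemma~\ref{lemma-est} gives only a uniform $L^2$ bound on $c^n$ and an $L^2$-in-time $H^1$ bound (via the $\nabla(c^{k+1}-\phi^k)$ term), so $\tau\sum_k\Vert c^k\Vert_{H^1}^4$ is not available and your version would not close the Gr\"onwall argument.

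The missing ingredient is an interpolation step in place of (or in addition to) the Sobolev embedding. The paper applies Young first to get $\kappa^3\tau\Vert c^n\Vert_{L^3}^4$, and then uses the Gagliardo--Nirenberg inequality
\[
\Vert c^n\Vert_{L^3}\le C\,\Vert c^n\Vert_{L^2}^{1/2}\,\Vert c^n\Vert_{H^1}^{1/2}\qquad (d\le 3),
\]
so that $\Vert c^n\Vert_{L^3}^4\le C\,\Vert c^n\Vert_{L^2}^2\,\Vert c^n\Vert_{H^1}^2$. The uniform bound $\Vert c^n\Vert\le C$ from \eqref{basic-1} then absorbs the $L^2$ factor into the constant, yielding exactly $C\kappa^3\tau\Vert c^n\Vert_{H^1}^2$. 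In short: replace ``Sobolev embedding $H^1\hookrightarrow L^3$'' by ``interpolation between $L^2$ and $H^1$'' and invoke the uniform $L^2$ bound on $c^n$; the rest of your write-up is correct.
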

\begin{proof}
	Using the Holder's inequality, the Young's inequality and the interpolation inequality, we have
	\begin{align}
		\kappa\tau&\Vert c^n\nabla(e_c^{n+1}-e_\phi^{n})\Vert^\frac43_{L^{\frac65}}\leq \kappa\tau\Vert c^n\Vert_{L^3}^\frac43\Vert\nabla(e_c^{n+1}-e_\phi^{n})\Vert^\frac43\leq  \tau\Vert\nabla(e_c^{n+1}-e_\phi^{n})\Vert^2+\kappa^3\tau\Vert c^n\Vert_{L^3}^4\nonumber\\
		&\leq  \tau\Vert\nabla(e_c^{n+1}-e_\phi^{n})\Vert^2+\kappa^3\tau\Vert c^n\Vert_{L^2}^2\Vert c^n\Vert_{H^1}^2,\nonumber
	\end{align}
	which, together with bounds that $\Vert c^n\Vert^2\le C$ according to $\eqref{basic-1}$, gives the estimate \eqref{basic-3-1}.
\end{proof}

Next, we establish bounds for the remainders on the right-hand side of \eqref{3.est.f}.
\begin{lemma}\label{lem.est.til}
	Under assumptions \eqref{apf-1}, \eqref{apf-2}, \eqref{er-in}, \eqref{re-reg} and \eqref{pr-reg-1}-\eqref{pr-reg-3}, we have
	\begin{align*}
		&-\tau\tilde R^{n+1}_\phi(e_\phi^{n+1},\nabla  e_\phi^{n+1})\leq C\tau \big(h^{2l}+\tau^2\big)+\tau\kappa\Vert e_\phi^{n+1}\Vert_{H^1}^2,\\
		&-\tau\tilde R^{n+1}_\phi(e_\mu^{n+1},\nabla  e_\mu^{n+1})\leq C\tau\big(h^{2l}+\tau^2+\Vert e_\phi^n\Vert^2\big)+\kappa\tau\Vert \nabla e_\mu^{n+1}-c^n\nabla(e_c^{n+1}-e_\phi^{n})\Vert^2\\
		&\quad\quad +\kappa\tau\Vert\nabla(e_c^{n+1}-e_\phi^{n})\Vert^2+C\kappa (h^{2l}+\tau^2)\tau\|c^n\|_{H^1}^2,\\
		&-\tilde R_\phi(L_h(\delta e_\phi^{n+1}),\nabla L_h(\delta e_\phi^{n+1}))\leq\tau C\big((h^l)^2+\tau^2\big) +\frac{\kappa}{4\tau}\Vert L_h(\delta_t e_\phi^{n+1})\Vert^2_{H^1},\\		
		&-\tau\tilde R^{n+1}_c(e_c^{n+1}-e_\phi^n,\nabla (e_c^{n+1}-e_\phi^n))\leq C\tau \big(h^{2l}+\tau^2\big)+\tau\kappa\Vert(e_c^{n+1}-e_\phi^n)\Vert^2_{H^1(\Omega)},\\
		&\tilde R^{n+1}_\mu(\delta_t e_\phi^{n+1})\leq \tau C\big((h^l)^2+\tau^2\big) +\frac{\kappa}{4\tau}\Vert \nabla L_h(\delta_t e_\phi^{n+1})\Vert^2.		
	\end{align*} Here $\kappa>0$ is any positive constant.
\end{lemma}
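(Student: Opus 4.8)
\textbf{Proof proposal for Lemma \ref{lem.est.til}.}

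The plan is to treat the five remainder estimates one at a time, since each $\tilde R$ term was defined in \eqref{rem.til.phi}--\eqref{rem.til.mu} as a sum of a ``time-truncation'' piece and a ``spatial-projection'' piece, and the mechanism for bounding the two pieces is uniform across all five. For the time-truncation contributions I would write, for any sufficiently smooth $w$,
\[
\frac{w(t_{n+1})-w(t_n)}{\tau}-w_t(t_{n+1}) = -\frac{1}{\tau}\int_{t_n}^{t_{n+1}}(s-t_n)\,w_{tt}(s)\,ds,
\]
so that its $L^2(\Omega)$ norm is bounded by $C\tau^{1/2}\big(\int_{t_n}^{t_{n+1}}\|w_{tt}(s)\|^2\,ds\big)^{1/2}$; applied with $w=\phi_h$ and $w=c_h$ this gives contributions of order $\tau^2$ after the Cauchy--Schwarz pairing with the test function and summation is reorganized, using the regularity \eqref{re-reg} and the projection stability of $\phi_h,c_h$ obtained by differentiating \eqref{1.pro} in $t$. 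Likewise, the difference $c(t_{n+1})-c_h(t_n)$ (and similar mixed terms) I would split as $[c(t_{n+1})-c(t_n)] + [c(t_n)-c_h(t_n)]$, the first of order $\tau$ in $H^1$, the second of order $h^{l}$ in $H^1$ by \eqref{pr-reg-2}; the same device handles $\phi(t_{n+1})-\phi_h(t_n)$ and $\mu(t_{n+1})-\mu_h(t_{n+1})$ via \eqref{pr-reg-1}, \eqref{pr-reg-3}. Pairing these against the test gradient and applying Young's inequality with the free parameter $\kappa$ deposits the ``absorbable'' norm ($\|e_\phi^{n+1}\|_{H^1}^2$, $\|e_c^{n+1}-e_\phi^n\|_{H^1}^2$, or $\|\nabla L_h(\delta_t e_\phi^{n+1})\|^2$) on the right and leaves the stated $C\tau(h^{2l}+\tau^2)$ data term.

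The only genuinely delicate estimate is the second one, $-\tau\tilde R^{n+1}_\phi(e_\mu^{n+1},\nabla e_\mu^{n+1})$, precisely because — as the text warns — we have no uniform bound on $\tau\|\nabla e_\mu^{n+1}\|^2$. Here I would avoid ever testing against $\nabla e_\mu^{n+1}$ directly. Instead, inside $\tilde R_\phi^{n+1}$ the term pairing with $\nabla e_\mu^{n+1}$ comes from
\[
-\big\langle c(t_{n+1})\nabla(c(t_{n+1})-\phi(t_{n+1})) - c_h(t_n)\nabla(c_h(t_{n+1})-\phi_h(t_n)),\ \nabla e_\mu^{n+1}\big\rangle,
\]
which I would rewrite, after adding and subtracting $c^n\nabla(c^{n+1}-\phi^n)$ type terms, so that the factor multiplying $\nabla e_\mu^{n+1}$ is split into (i) a smooth data factor of size $h^{l}+\tau$ in a suitable $L^p$ and (ii) a piece of the form $c^n\nabla(e_c^{n+1}-e_\phi^n)$. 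Against (ii) I use the algebraic identity $\nabla e_\mu^{n+1} = \big(\nabla e_\mu^{n+1}-c^n\nabla(e_c^{n+1}-e_\phi^n)\big) + c^n\nabla(e_c^{n+1}-e_\phi^n)$ to re-express everything in terms of the \emph{good} quantity $\nabla e_\mu^{n+1}-c^n\nabla(e_c^{n+1}-e_\phi^n)$ and of $c^n\nabla(e_c^{n+1}-e_\phi^n)$, then invoke Lemma \ref{lemma-est-1} with the $L^{6/5}$--$L^3$ Hölder pairing to convert the nonlinear quadratic-in-error term into $\tau\|\nabla(e_c^{n+1}-e_\phi^n)\|^2$ plus $C\kappa^3\tau\|c^n\|_{H^1}^2$ times the small data factor; against the smooth factor (i) I pair using Hölder in $L^{6}$–$L^{6/5}$ and Young, absorbing $\kappa\tau\|\nabla e_\mu^{n+1}-c^n\nabla(e_c^{n+1}-e_\phi^n)\|^2$ on the right and leaving $C\tau(h^{2l}+\tau^2+\|e_\phi^n\|^2)$; the $\|e_\phi^n\|^2$ enters through the $f(\phi^n)-f(\phi_h(t_n))$–type comparison and the bound \eqref{basic-2a} on $\overline{\mu^{n+1}}$. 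This is exactly the combination of terms announced on the right-hand side of the second inequality.

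For the two $L_h$-based estimates ($\tilde R_\phi$ tested at $L_h(\delta_t e_\phi^{n+1})$ and $\tilde R_\mu^{n+1}(\delta_t e_\phi^{n+1})$) I would use that $L_h$ is self-adjoint and satisfies $\langle \xi,\eta\rangle\le\|\nabla L_h\xi\|\,\|\nabla L_h\eta\|$-type bounds via \eqref{def.iap}, so that $\langle R,\delta_t e_\phi^{n+1}\rangle = \langle \nabla L_h R,\nabla L_h(\delta_t e_\phi^{n+1})\rangle$ up to the zero-mean correction handled by Lemma \ref{lem.avg}; then Young's inequality puts $\tfrac{\kappa}{4\tau}\|\nabla L_h(\delta_t e_\phi^{n+1})\|^2$ (or $\|L_h(\delta_t e_\phi^{n+1})\|^2_{H^1}$) on the right and leaves $\tau C(h^{2l}+\tau^2)$ from the data factor $\|\nabla L_h R\|\le C\| R\|_{(H^1)'}\le C(h^l+\tau)$. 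The fourth estimate, $-\tau\tilde R^{n+1}_c(e_c^{n+1}-e_\phi^n,\nabla(e_c^{n+1}-e_\phi^n))$, structurally resembles the second one but with the test function $e_c^{n+1}-e_\phi^n$ in place of $e_\mu^{n+1}$: the term $\langle c(t_{n+1})\nabla\mu(t_{n+1}) - c_h(t_n)\nabla\mu_h(t_{n+1}),\nabla(e_c^{n+1}-e_\phi^n)\rangle$ is handled by writing $c(t_{n+1})\nabla\mu(t_{n+1}) - c_h(t_n)\nabla\mu_h(t_{n+1})$ as a data term of order $h^l+\tau$ (using $\nabla\mu_h = \nabla\mu$ in the projected sense and $\mu\in L^\infty(H^{1+l})$) plus lower-order pieces, and by the nonlinear quadratic part being absorbed via Young directly into $\tau\kappa\|e_c^{n+1}-e_\phi^n\|^2_{H^1(\Omega)}$; the remaining pieces of $\tilde R_c^{n+1}$ are handled exactly as in the first estimate. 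I expect the bookkeeping in the second estimate — keeping every cross term expressed through $\nabla e_\mu^{n+1}-c^n\nabla(e_c^{n+1}-e_\phi^n)$ rather than $\nabla e_\mu^{n+1}$ alone, and correctly pairing the $L^{6/5}$ norm from Lemma \ref{lemma-est-1} — to be the main obstacle; everything else is the standard Taylor-plus-projection routine.
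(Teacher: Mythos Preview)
Your proposal is correct and matches the paper's approach: decompose each $\tilde R$ into time-truncation and projection pieces $\Delta_1,\dots,\Delta_6$ of size $O(h^{l}+\tau)$, pair via Cauchy--Schwarz/Young, and for the critical second estimate split the \emph{test function} $\nabla e_\mu^{n+1}$ (not the data factor) as $(\nabla e_\mu^{n+1}-c^n\nabla(e_c^{n+1}-e_\phi^n))+c^n\nabla(e_c^{n+1}-e_\phi^n)$ in $L^{6/5}$, using the $L^6$--$L^{6/5}$ H\"older pairing together with Lemma~\ref{lemma-est-1} and the Young inequality $xy\le \kappa x^{4/3}+\kappa^{-3}y^4$. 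One small correction: the $\Vert e_\phi^n\Vert^2$ contribution enters through the bound $|\overline{e_\mu^{n+1}}|\le C(\tau+h^l+\Vert e_\phi^n\Vert)$ from Lemma~\ref{lem.avg} (equation~\eqref{av-1}) after applying Poincar\'e to $\Vert e_\mu^{n+1}\Vert_{L^{6/5}}$, not from \eqref{basic-2a}, and for $\tilde R_\mu^{n+1}(\delta_t e_\phi^{n+1})$ the paper transfers via $\langle g,\delta_t e_\phi^{n+1}\rangle=\langle\nabla g,\nabla L_h(\delta_t e_\phi^{n+1})\rangle+$ mean correction (i.e.\ it moves $\nabla$ onto the smooth data $g$, not onto $L_h R$).
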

\begin{proof}
	Replacing $\xi$ in definition \eqref{rem.til.phi} with $e_\phi^{n+1}$, we have
	\begin{align*}
		&\tau\tilde R^{n+1}_\phi(e_\phi^{n+1},\nabla  e_\phi^{n+1})= \tau\left\langle \frac{\phi_h(t_{n+1})-\phi_h(t_{n})}{\tau}
		-\phi_t(t_{n+1}),e_\phi^{n+1}\right\rangle\\&
		-\tau\Big\langle-c(t_{n+1})\nabla \big(c(t_{n+1})-\phi(t_{n+1})\big)- \big(-c_h(t_n)\nabla \big(c_h(t_{n+1})-\phi_h(t_n)\big)\big),\nabla e_\phi^{n+1} \Big\rangle\\
		=& \tau\left\langle \frac{\phi_h(t_{n+1})-\phi_h(t_{n})-\delta\phi(t_{n+1})}{\tau}+\frac{\delta\phi(t_{n+1})}{\tau} - \phi_t(t_{n+1}),e_\phi^{n+1}\right\rangle \\
		&-\tau\Big\langle-\left(c(t_{n+1})\nabla (c(t_{n+1})-\phi(t_{n+1}))-c(t_{n})\nabla (c(t_{n+1})-\phi(t_{n+1}))\right)\\
		&-(c(t_{n})\nabla (c(t_{n+1})-\phi(t_{n+1}))-c(t_{n})\nabla (c(t_{n+1})-\phi(t_{n})))\\
		&-(c(t_{n})\nabla (c(t_{n+1})-\phi(t_{n}))-c_h(t_{n})\nabla (c(t_{n+1})-\phi(t_{n})))\\
		& - \big(-c_h(t_n)\nabla \big(c_h(t_{n+1})-\phi_h(t_n)-c(t_{n+1})+\phi(t_{n})\big)\big),\nabla e_\phi^{n+1} \Big\rangle\\
		=&\tau\left\langle \Delta_1+\Delta_2,e_\phi^{n+1}\right\rangle + \tau\Big\langle \Delta_3 +\Delta_4 +\Delta_5 + \Delta_6,\nabla e_\phi^{n+1} \Big\rangle.
	\end{align*}
	Here,
	\begin{align*}
		&\Vert\Delta_1\Vert =  \bigg\Vert\frac{\phi_h(t_{n+1})-\phi_h(t_{n})-\delta\phi(t_{n+1})}{\tau}\bigg\Vert =O(h^{1+l}),\\
		&\Vert\Delta_2\Vert =  \bigg\Vert\frac{\delta\phi(t_{n+1})}{\tau} - \phi_t(t_{n+1})\bigg\Vert=O(\tau),\\
		&\Vert\Delta_3 \Vert= \Vert c(t_{n+1})\nabla (c(t_{n+1})-\phi(t_{n+1}))-c(t_{n})\nabla (c(t_{n+1})-\phi(t_{n+1})) \Vert= O(\tau),\\
		&\Vert\Delta_4 \Vert= \Vert c(t_{n})\nabla (c(t_{n+1})-\phi(t_{n+1}))-c(t_{n})\nabla (c(t_{n+1})-\phi(t_{n}))\Vert=O(\tau),\\
		&\Vert\Delta_5\Vert = \Vert c(t_{n})\nabla (c(t_{n+1})-\phi(t_{n}))-c_h(t_{n})\nabla (c(t_{n+1})-\phi(t_{n}))\Vert=O(h^{1+l}),\\
		&\Vert\Delta_6\Vert = \Vert-c_h(t_n)\nabla \big(c_h(t_{n+1})-\phi_h(t_n)-c(t_{n+1})+\phi(t_{n})\big)\Vert=O(h^{1+l}).
	\end{align*}
	By the Cauchy-Schwarz inequality and the Young's inequality and using the assumption \eqref{re-reg}, we have
	\begin{align}
		-\tau\tilde R^{n+1}_\phi(e_\phi^{n+1},\nabla  e_\phi^{n+1})&\leq \frac{\tau}{4}(\sum_{j=1}^6\Vert\Delta_j\Vert^2)+
		\tau\left(\Vert e_\phi^{n+1}\Vert^2+\Vert \nabla  e_\phi^{n+1}\Vert^2\right)\leq\tau C\big((h^l)^2+\tau^2\big)+\tau\Vert e_\phi^{n+1}\Vert_{H^1}^2.\notag
	\end{align}
	Here, we use \eqref{pr-reg-1} and the assumption \eqref{re-reg}(For $\Delta_1$, we need to take the partial derivative of \eqref{1.pro} concerning $t$ and then use the assumption \eqref{pr-reg-1}, which is similar to \eqref{phi.av}). Analogous estimates can be established
	:
	\begin{align*}
	-\tilde R^{n+1}_\phi(L_h(\delta_t e_\phi^{n+1}),\nabla L_h(\delta_t e_\phi^{n+1}))&\leq C\tau(\sum_{j=1}^6\Vert\Delta_j\Vert^2)+\frac\kappa{4\tau}
		\left(\Vert L_h(\delta_t e_\phi^{n+1})\Vert^2+\Vert \nabla  L_h(\delta_t e_\phi^{n+1})\Vert^2\right)\notag\\
		&\leq\tau C\big((h^l)^2+\tau^2\big)+\frac\kappa{4\tau}\Vert L_h(\delta_t e_\phi^{n+1})\Vert_{H^1}^2\notag
		\end{align*}
	and
	\begin{align*}
		&-\tau\tilde R_c(e_c^{n+1}-e_\phi^n,\nabla (e_c^{n+1}-e_\phi^n))=-\Big(\tau\left\langle  \frac{c_h(t_{n+1})-c_h(t_{n})}{\tau} - c_t(t_{n+1}),e_c^{n+1}-e_\phi^n\right\rangle. \\
		&+ \tau \Big\langle  c(t_{n+1})\nabla \mu(t_{n+1})-2c^2(t_{n+1})\nabla \big(c(t_{n+1})-\phi(t_{n+1})\big) \\
		&-\big(c(t_{n})\nabla \mu_h(t_{n+1})-2c_h^2(t_n)\nabla \big(c_h(t_{n+1})-\phi_h(t_n)\big)\big),\nabla (e_c^{n+1}-e_\phi^n)\Big\rangle\\
		&+\tau\left\langle \nabla \big(\phi(t_{n+1})-\phi_h(t_n)\big),\nabla (e_c^{n+1}-e_\phi^n)\right\rangle\Big),\\
		&\leq \tau C\big((h^l)^2+\tau^2\big)+\tau\kappa(\Vert(e_c^{n+1}-e_\phi^n)\Vert^2+\Vert\nabla (e_c^{n+1}-e_\phi^n)\Vert^2)
	\end{align*}
	Next, we control $-\tau\tilde R^{n+1}_\phi(e_\mu^{n+1},\nabla  e_\mu^{n+1})$ by using the estimate $\tau\Vert \nabla e_\mu^{n+1}-c^n\nabla(e_c^{n+1}-e_\phi^{n})\Vert^2$ and the estimate \eqref{basic-3-1} in Lemma \ref{lemma-est-1} for $\kappa\tau\Vert c^n\nabla(e_c^{n+1}-e_\phi^{n+1})\Vert^\frac43_{L^{\frac65}}$.
	In fact, the following estimate is established:
	\begin{align}
		&-\tau\tilde R^{n+1}_\phi(e_\mu^{n+1},\nabla  e_\mu^{n+1})\leq \tau\sum_{j=1}^2\Vert\Delta_j\Vert_{L^6}\Vert e_\mu^{n+1}\Vert_{L^{\frac65}}+\tau\sum_{j=3}^6\Vert\Delta_j\Vert_{L^6}\Vert \nabla e_\mu^{n+1}\Vert_{L^{\frac65}}\notag\\
		\leq &\tau\sum_{j=1}^2\Vert\Delta_j\Vert_{L^6}(\Vert \overline{e_\mu^{n+1}}\Vert_{L^{\frac65}}+\Vert e_\mu^{n+1}-\overline{e_\mu^{n+1}}\Vert_{L^{\frac65}})+\tau\sum_{j=3}^6\Vert\Delta_j\Vert_{L^6}\Vert \nabla e_\mu^{n+1}\Vert_{L^{\frac65}}\notag\\
		\leq &\tau\sum_{j=1}^2\Vert\Delta_j\Vert_{L^6}\Vert \overline{e_\mu^{n+1}}\Vert_{L^{\frac65}}+\tau\sum_{j=3}^6\Vert\Delta_j\Vert_{L^6}\big(\Vert(\nabla e_\mu^{n+1}-c^n\nabla(e_c^{n+1}-e_\phi^{n+1}))\Vert_{L^{\frac65}}+\Vert c^n\nabla(e_c^{n+1}-e_\phi^{n+1})\Vert_{L^{\frac65}}\big)\notag\\
		\leq & C\tau\big(h^{2l}+\tau^2+\Vert e_\phi^n\Vert^2\big)+\kappa\tau\Vert \nabla e_\mu^{n+1}-c^n\nabla(e_c^{n+1}-e_\phi^{n+1})\Vert^2_{L^{\frac65}}\nonumber\\
		&+\frac{C\tau}{(\tau^2+h^{2l})}\sum_{j=1}^6\Vert\Delta_j\Vert^4_{L^6}+\kappa\tau(\tau^2+h^{2l})^{\frac13}\Vert c^n\nabla(e_c^{n+1}-e_\phi^{n+1})\Vert^\frac43_{L^{\frac65}}\nonumber\\
		\leq & C\tau\big(h^{2l}+\tau^2+\Vert e_\phi^n\Vert^2\big)+\kappa\tau\Vert \nabla e_\mu^{n+1}-c^n\nabla(e_c^{n+1}-e_\phi^{n})\Vert^2+\kappa\tau\Vert\nabla(e_c^{n+1}-e_\phi^{n})\Vert^2+C\kappa (h^{2l}+\tau^2)\tau\|c^n\|_{H^1}^2\nonumber
	\end{align} for any $\kappa>0$. Here, we used Lemmas \ref{lem.avg}-\ref{lemma-est-1}, the Poincare's inequality and the Young's inequality $xy\le \kappa_1 x^{\frac43}+\kappa_1^{-3}y^4$ for any $\kappa_1>0$.
	
	The remain is to estimate $\tilde R_\mu(\delta_t e_\phi^{n+1})$as follows. Replacing $\xi$ in \eqref{rem.til.mu} with $\delta_t e_\phi^{n+1}$ and using \eqref{apf-2}, the assumption \eqref{re-reg}, \eqref{def.iap},\eqref{av-1} and the Cauchy-Schwarz's inequality, 
	we have
	\begin{align*}
		&\tilde R^{n+1}_\mu(\delta_t e_\phi^{n+1})=\frac{1}{\varepsilon^2}\left\langle f(\phi(t_{n+1}))-f(\phi_h(t_{n}))-S(\phi_h(t_{n+1})-\phi_h(t_n)),\delta_t e_\phi^{n+1}\right\rangle \\
		&- \left\langle \mu(t_{n+1}) - \mu_h(t_{n+1}) + c(t_{n+1})- c_h(t_{n+1}),\delta e_\phi^{n+1}\right\rangle\\
		&=\frac{1}{\varepsilon^2}\left\langle \nabla(f(\phi(t_{n+1}))-f(\phi_h(t_{n}))-S(\phi_h(t_{n+1})-\phi_h(t_n))),\nabla L_h(\delta_t e_\phi^{n+1})\right\rangle\\
		&-\frac{1}{\varepsilon^2}\left\langle f(\phi(t_{n+1}))-f(\phi_h(t_{n}))-S(\phi_h(t_{n+1})-\phi_h(t_n)),1\right\rangle\left\langle \delta_t e_\phi^{n+1},1\right\rangle \\
		&- \left\langle \nabla(\mu(t_{n+1}) - \mu_h(t_{n+1}) + c(t_{n+1})- c_h(t_{n+1})),\nabla L_h(\delta_t e_\phi^{n+1})\right\rangle\\
		&+\left\langle \mu(t_{n+1}) - \mu_h(t_{n+1}) + c(t_{n+1})- c_h(t_{n+1}),1\right\rangle\left\langle \delta_t e_\phi^{n+1},1\right\rangle\\
		&\le \tau C\big((h^l)^2+\tau^2\big) +\frac{\kappa}{4\tau}\Vert \nabla L_h(\delta_t e_\phi^{n+1})\Vert^2.
	\end{align*}
	This completes the proof of Lemma \ref{lem.est.til}.
\end{proof}

For the remainder from numerical approximation on the right-hand side of \eqref{3.est.f}, we have the following estimates
\begin{lemma}\label{lem.est.or}
	Under assumption \eqref{apf-2}, \eqref{re-reg} and \eqref{pr-reg-1}-\eqref{pr-reg-3}, we have
	\begin{align*}
		&\tau R^{n+1}_\phi(\nabla e_\phi^{n+1})\leq\tau C\Vert e_c^n\Vert^2+\tau\kappa\Vert \nabla e_\phi^{n+1}\Vert^2,\,R^{n+1}_\phi(\nabla L_h(\delta_t e_\phi^{n+1}))\leq\tau C\Vert e_c^n\Vert^2+\frac\kappa{4\tau}\Vert\nabla L_h(\delta_t e_\phi^{n+1})\Vert^2,\\
		&\tau R^{n+1}_\phi(\nabla e_\mu^{n+1}) +\tau R^{n+1}_c(\nabla(e_c^{n+1}-e_\phi^n))\leq C\tau\Vert e_c^n\Vert^2+\kappa\tau\Vert \nabla e_\mu^{n+1}-c^n\nabla(e_c^{n+1}-e_\phi^n)\Vert^2,\\
		&-R_\mu^{n+1}(\delta_t e_\phi^{n+1})\leq\tau C\big((h^l)^2+\tau^2\big) +\frac{\kappa}{4\tau}\Vert \nabla L_h(\delta_t e_\phi^{n+1})\Vert^2.
	\end{align*}
\end{lemma}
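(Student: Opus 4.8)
The plan is to bound each of the remainder terms $R^{n+1}_\phi$, $R^{n+1}_c$ and $R^{n+1}_\mu$ by substituting the appropriate test function into the definitions \eqref{rem.phi}--\eqref{rem.mu}, then applying H\"older's and Young's inequalities together with the already-established uniform bounds of Lemma \ref{lemma-est} and the projection estimates \eqref{pr-reg-1}--\eqref{pr-reg-3}. First I would handle $R^{n+1}_\phi(\nabla e_\phi^{n+1})$: by \eqref{rem.phi} this equals $\langle e_c^n\,\nabla(c_h(t_{n+1})-\phi_h(t_n)),\nabla e_\phi^{n+1}\rangle$, and since $c_h(t_{n+1})$ and $\phi_h(t_n)$ are projections of functions with the regularity \eqref{re-reg}, the factor $\nabla(c_h(t_{n+1})-\phi_h(t_n))$ is bounded in $L^\infty$ (or at least in a high enough $L^p$), so H\"older plus Young gives $\tau R^{n+1}_\phi(\nabla e_\phi^{n+1}) \le C\tau\|e_c^n\|^2 + \kappa\tau\|\nabla e_\phi^{n+1}\|^2$. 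The same computation with test function $\nabla L_h(\delta_t e_\phi^{n+1})$ yields the second estimate, the only change being that the $\kappa$-absorbed term becomes $\frac{\kappa}{4\tau}\|\nabla L_h(\delta_t e_\phi^{n+1})\|^2$ after writing the $\tau$ from $\tau R^{n+1}_\phi$ together with the $\frac1\tau$ that appears when $\delta_t e_\phi^{n+1}$ is expanded (so effectively $\tau\cdot\frac1\tau\cdot(\cdot)^2 = \frac1\tau(\cdot)^2$ up to constants, matching the stated form).

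Next I would treat the combined term $\tau R^{n+1}_\phi(\nabla e_\mu^{n+1}) + \tau R^{n+1}_c(\nabla(e_c^{n+1}-e_\phi^n))$. Here the key observation — and this is where the structure of the scheme matters — is that neither $R^{n+1}_\phi(\nabla e_\mu^{n+1})$ nor $R^{n+1}_c(\nabla(e_c^{n+1}-e_\phi^n))$ can individually be controlled using $\|\nabla e_\mu^{n+1}\|$, since no uniform bound on $\|\nabla e_\mu^{n+1}\|$ is available; instead one must arrange the combination so that the dangerous gradient always appears in the stable combination $\nabla e_\mu^{n+1} - c^n\nabla(e_c^{n+1}-e_\phi^n)$. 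Concretely, from \eqref{rem.phi} and \eqref{rem.c} the $e_c^n \nabla\mu_h(t_{n+1})$ contribution and the $e_c^n(c^n+c_h(t_n))\nabla(c_h(t_{n+1})-\phi_h(t_n))$ contribution pair with $\nabla(e_c^{n+1}-e_\phi^n)$ and with $c^n\nabla(e_c^{n+1}-e_\phi^n)$ respectively; after regrouping, the $\nabla e_\mu^{n+1}$ and $c^n\nabla(e_c^{n+1}-e_\phi^n)$ pieces assemble into $\nabla e_\mu^{n+1} - c^n\nabla(e_c^{n+1}-e_\phi^n)$, whose square is then absorbed by $\kappa\tau\|\nabla e_\mu^{n+1}-c^n\nabla(e_c^{n+1}-e_\phi^n)\|^2$, while the leftover factors $e_c^n$ times bounded projected gradients give $C\tau\|e_c^n\|^2$. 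This bookkeeping is the main obstacle: one has to be careful that every occurrence of the uncontrolled $\nabla e_\mu^{n+1}$ or $\nabla(e_c^{n+1}-e_\phi^n)$ is matched by a coefficient that lets it fold into the stable quantity, and that the coefficients $c^n$, $c_h(t_n)$ (bounded only in $L^2\cap H^1$, hence in $L^6$ for $d\le 3$) are distributed so that the H\"older exponents close — typically pairing an $L^6$ factor from $c^n$ with an $L^3$ factor and an $L^2$ factor, or invoking \eqref{basic-3-1} of Lemma \ref{lemma-est-1} to convert an $L^{6/5}$-gradient term at the cost of a $\|c^n\|_{H^1}^2$ term that is $O(1)$ by \eqref{basic-1}.

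Finally, for $-R^{n+1}_\mu(\delta_t e_\phi^{n+1})$, using \eqref{rem.mu} this equals $-\frac1{\varepsilon^2}\langle f(\phi^n)-f(\phi_h(t_n)),\delta_t e_\phi^{n+1}\rangle$. Since $\delta_t e_\phi^{n+1}$ itself has no good bound, I would pass to the inverse Laplacian exactly as in the proof of Lemma \ref{lem.est.til} for $\tilde R^{n+1}_\mu$: write $\delta_t e_\phi^{n+1}$ via \eqref{def.iap}, moving the pairing onto $\nabla L_h(\delta_t e_\phi^{n+1})$ and a mean-value term that is controlled by \eqref{av-1}. The difference $f(\phi^n)-f(\phi_h(t_n)) = f(\phi^n)-f(\phi(t_n)) + f(\phi(t_n))-f(\phi_h(t_n))$; the second piece is $O(h^{1+l})$ in $L^2$ by the Lipschitz bound \eqref{apf-2} and \eqref{pr-reg-1}, and on the discrete-error side one uses $|f(\phi^n)-f(\phi(t_n))|\le L|e_\phi^n|$, but — since we are already inside the inequality \eqref{3.est.f} and $e_\phi^n$ is controlled on the left — this contributes at worst $C\tau(h^{2l}+\tau^2) + \frac{\kappa}{4\tau}\|\nabla L_h(\delta_t e_\phi^{n+1})\|^2$ after Cauchy--Schwarz and Young, which is the asserted bound. (Strictly, the $\|e_\phi^n\|^2$ term that appears here is harmless: it is absorbed into the Gr\"onwall sum when \eqref{3.est.f} is iterated, which is why the stated estimate suppresses it; I would remark on this rather than track it through.) Assembling the five bounds completes the proof of Lemma \ref{lem.est.or}.
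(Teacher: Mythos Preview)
Your proposal is correct and follows essentially the same approach as the paper: substitute the indicated test functions into \eqref{rem.phi}--\eqref{rem.mu}, use H\"older/Young with the $L^\infty$ bounds on the projected gradients for the first two estimates, exploit the algebraic cancellation between $R^{n+1}_\phi(\nabla e_\mu^{n+1})$ and $R^{n+1}_c(\nabla(e_c^{n+1}-e_\phi^n))$ so that $\nabla e_\mu^{n+1}$ appears only in the stable combination, and pass to $L_h$ for $R^{n+1}_\mu$. Two minor remarks: the paper does not need Lemma~\ref{lemma-est-1} here since the cancellation alone suffices (the residual $\kappa\tau\|\nabla(e_c^{n+1}-e_\phi^n)\|^2$ term is simply absorbed on the left of \eqref{3.est.f}), and your observation that the last bound tacitly suppresses an $\|e_\phi^n\|_{H^1}^2$-type contribution is well-founded---the paper folds it into the Gr\"onwall term of \eqref{thm-ine-100} without comment.
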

\begin{proof}
	Similar to Lemma \ref{lem.est.til}, the proof is a result of using Assumption \eqref{re-reg}, \eqref{pr-reg-1}-\eqref{pr-reg-3}, the Schwarz's inequality, and the Young's inequality, so the details are omitted. Here we outline them as follows.
	
	Replacing $\xi$ in definition \eqref{rem.phi} by $e_\phi^{n+1}, L_h(\delta_t e_\phi^{n+1})$, $e_\mu^{n+1}$, and $\eta$ in definition \eqref{rem.c} by $e_c^{n+1}-e_\phi^n$ correspondingly, we have 
	\begin{align*}
		&\tau R^{n+1}_\phi(\nabla e_\phi^{n+1})= \tau\left\langle -e_c^{n}\nabla \big(c_h(t_{n+1})-\phi_h(t_n)\big),\nabla e_\phi^{n+1}\right\rangle\leq\tau C\Vert e_c^n\Vert^2+\kappa\tau\Vert \nabla e_\phi^{n+1}\Vert^2,\\
		&R^{n+1}_\phi(\nabla L_h(\delta_t e_\phi^{n+1}))=\left\langle -e_c^{n}\nabla \big(c_h(t_{n+1})-\phi_h(t_n)\big),\nabla L_h(\delta_t e_\phi^{n+1})\right\rangle\leq\tau C\Vert e_c^n\Vert^2+\frac{\kappa}{4\tau}\Vert\nabla  L_h(\delta_t e_\phi^{n+1})\Vert^2
	\end{align*}
	and
	\begin{align}
		&\tau R^{n+1}_\phi(\nabla e_\mu^{n+1}) +\tau R^{n+1}_c(\nabla(e_c^{n+1}-e_\phi^n))=\tau \left\langle e_c^n\nabla \big(c_h(t_{n+1})-\phi_h(t_n)\big),\nabla e_\mu^{n+1}\right\rangle\nonumber\\
		&\phantom{xx}{} +\tau \big\langle \nabla \mu_h(t_{n+1})- (c^n+c_h(t_n))\nabla \big(c_h(t_{n+1})-\phi_h(t_n)\big), e_c^n\nabla(e_c^{n+1}-e_\phi^n)\big\rangle\nonumber\\
		&=\tau \big\langle e_c^n\nabla \mu_h(t_{n+1})- e_c^nc_h(t_n)\nabla \big(c_h(t_{n+1})-\phi_h(t_n)\big), \nabla(e_c^{n+1}-e_\phi^n)\big\rangle\nonumber\\
		&\phantom{xx}{} +\tau\left\langle e_c^n\nabla \big(c_h(t_{n+1})-\phi_h(t_n)\big),\nabla e_\mu^{n+1}-c^n\nabla(e_c^{n+1}-e_\phi^n)\right\rangle\nonumber\\
		&\leq C\tau\Vert e_c^n\Vert^2+\kappa\tau\Vert\nabla(e_c^{n+1}-e_\phi^n)\Vert^2+\kappa\tau\Vert \nabla e_\mu^{n+1}-c^n\nabla(e_c^{n+1}-e_\phi^n)\Vert^2.\nonumber
	\end{align}
	Notice that there exists one cancellation between the term $\tau R^{n+1}_\phi(\nabla e_\mu^{n+1})$ and $\tau R^{n+1}_c(\nabla(e_c^{n+1}-e_\phi^n))$. This is key for our success in establishing the desired estimates for this two remainders.
	
	Also, replacing $\sigma$ in definition \eqref{rem.mu} by $\delta_t e_\phi^{n+1}$, we have
	\begin{align*}
		&-R^{n+1}_\mu(\delta_t e_\phi^{n+1})
		=-\frac{1}{\varepsilon^2}\left\langle f(\phi^n)-f(\phi_h(t_n)),\delta e_\phi^{n+1}\right\rangle=-\frac{1}{\varepsilon^2}\left\langle \nabla(f(\phi^n)-f(\phi_h(t_n))),\nabla L_h(\delta e_\phi^{n+1})\right\rangle\\&+\frac{1}{\varepsilon^2}\left\langle f(\phi^n)-f(\phi_h(t_n)),1\right\rangle\left\langle\delta e_\phi^{n+1},1\right\rangle\le \tau C\big((h^l)^2+\tau^2\big) +\frac{\kappa}{4\tau}\Vert \nabla L_h(\delta e_\phi^{n+1})\Vert^2.
	\end{align*}
	
	
	This completes the proof of Lemma \ref{lem.est.or}.\end{proof}


\noindent\textbf{The proof of Theorem \ref{thm.od}}.
It follows from \eqref{3.est.f}, together with Lemma \ref{lem.est.til} and Lemma \ref{lem.est.or} and by the Poincare's inequality $\Vert v -\bar v\Vert_{L^2(\Omega)}\leq C_\Omega\Vert \nabla  v\Vert_{L^2(\Omega)}$ for all $v\in H^1(\Omega)$, that
\begin{align}
	&\frac{1}2(\Vert e_\phi^{n+1}\Vert^2-\Vert e_\phi^n\Vert^2+\Vert\delta_t e_\phi^{n+1}\Vert^2)+\frac{C-\kappa}{\tau}\| L_h(\delta_t e_\phi^{n+1})\|_{H^1}^2+\frac{1}{2}(\Vert e_c^{n+1}\Vert^2-\Vert e_c^n\Vert^2+\Vert\delta_t e_c^{n+1}\Vert^2)\nonumber\\
	&+\left\langle e_c^n,e_\phi^n\right\rangle+\frac{1}2(\Vert \nabla  e_\phi^{n+1}\Vert^2-\Vert \nabla e_\phi^n\Vert^2+\Vert\nabla (\delta_t e_\phi^{n+1})\Vert^2)+\tau(1-3\kappa)\Vert\nabla (e_c^{n+1}-e_\phi^{n})\Vert^2\nonumber\\
	&\leq-\tau(\frac{1}4-2\kappa)\Vert (\nabla e_\mu^{n+1}- c^n\nabla (e_c^{n+1}-e_\phi^{n}))\Vert^2-\frac{S}{\varepsilon^2}\Vert\delta_t e_\phi^{n+1}\Vert^2
	+\left\langle e_c^{n+1},e_\phi^{n+1}\right\rangle \label{thm-ine-100}\\
	&+C\tau \big((h^{2l}+\tau^2)+ (h^{2l}+\tau^2)\|c^n\|_{H^1}^2+\Vert e_\phi^n\Vert_{H^1}^2+\Vert e_\phi^{n+1}\Vert_{H^1}^2+\Vert\delta_t e_\phi^{n+1}\Vert^2
	+\Vert e_c^n\Vert^2+\Vert e_c^{n+1}\Vert^2 \big).\nonumber
\end{align}

Choosing $\kappa>0$ to be small sufficiently, changing the superscript of the error functions to $k$, and summing up the inequality \eqref{thm-ine-100} from $k=0$ to $n(0\leq n\leq N-1)$, with the help of the fact that $\tau\sum_0^n\|c^k\|^2_{H^1}\le C$, we have
\begin{align*}
	&\frac{1}2\Vert e_\phi^{n+1}\Vert^2+\frac{1}2\Vert e_c^{n+1}\Vert^2+\frac{1}2\Vert \nabla e_\phi^{n+1}\Vert^2-\left\langle e_c^{n+1}, e_\phi^{n+1}\right\rangle\\
	&+\left(\frac{1}4-2\kappa\right)\sum_{k=0}^{n}\tau\Vert\nabla e_\mu^{n+1}- c^n\nabla (e_c^{n+1}-e_\phi^{n})\Vert^2+(1-3\kappa)\sum_{k=0}^{n}\tau\Vert\nabla (e_c^{k+1}-e_\phi^{k})\Vert^2\\
	\leq &\tau C\sum_{k=0}^n(h^{2l}+\tau^2+\Vert e_\phi^k\Vert_{H^1}^2+\Vert e_\phi^{k+1}\Vert_{H^1}^2+\Vert \delta e_\phi^{k+1}\Vert^2+ \Vert e_c^k\Vert^2+\Vert e_c^{k+1}\Vert^2)\\
	&\quad+\frac{1}2\Vert e_\phi^{0}\Vert^2+\frac{1}2\Vert e_c^{0}\Vert^2+\frac{1}2\Vert \nabla e_\phi^{0}\Vert^2-\left\langle e_c^{0}, e_\phi^{0}\right\rangle+C (h^{2l}+\tau^2),
\end{align*}
which, together with the fact that $\|e_c^{n+1}\|^2\le C(\|e_c^{n+1}-e_\phi^{n+1}\|^2+\|e_\phi^{n+1}\|^2)$ and that $\vert\overline{e_{\phi}^{n+1}}\vert\leq C(\tau+h^l)$ given by Lemma \ref{lem.avg}, with the help of the Poincare's inequality, yields that
\begin{align}
	&\left(\frac{1}2-\tau C\right)\Vert e_\phi^{n+1}-e_c^{n+1}\Vert^2+\left(\frac{1}2-\tau C\right)\Vert \nabla e_\phi^{n+1}\Vert^2+(1-3\kappa)\sum_{k=0}^{n}\tau\Vert\nabla (e_c^{k+1}-e_\phi^{k})\Vert^2\nonumber\\
	&+\left(\frac{1}4-2\kappa\right)\sum_{k=0}^{n}\tau\Vert\nabla e_\mu^{n+1}- c^n\nabla (e_c^{n+1}-e_\phi^{n})\Vert^2\nonumber\\
	&\leq\tau C\sum_{k=0}^n\big(h^{2l}+\tau^2+\Vert \nabla e_\phi^k\Vert^2+ \Vert e_\phi^k\Vert^2+\Vert e_\phi^{k}-e_c^{k}\Vert^2\big)+C (h^{2l}+\tau^2).\label{energ-1}
\end{align}

According to the Poincare's inequality, 
it follows from \eqref{energ-1} that there exist a $\tau^*>0$ such that for some positive constant $C'$ (independent of $\tau,\, h$) and for all $\tau\leq \tau^*$
\begin{align}
	&\Vert e_\phi^{n+1}-e_c^{n+1}\Vert^2+\Vert e_\phi^{n+1}\Vert_{H^1}^2+\sum_{k=0}^{n}\tau\Vert\nabla e_c^{k+1}\Vert^2+\sum_{k=0}^{n}\tau\Vert\nabla e_\mu^{n+1}- c^n\nabla (e_c^{n+1}-e_\phi^{n})\Vert^2\nonumber\\
	&\leq C'(h^{l}+\tau)^2+\tau C'\sum_{k=0}^n\big(\Vert e_\phi^k\Vert_{H^1}^2+\Vert e_\phi^{k}-e_c^{k}\Vert^2\big)\label{est.main0}
\end{align}
holds for all $n=0,1,\cdots,N-1$. 
Applying the Gr\"onwall's inequality to \eqref{est.main0}, we have 
\begin{align}\label{est.main}
	\Vert e_\phi^{n+1}\Vert_{H^1}+\Vert e_\phi^{n+1}-e_c^{n+1}\Vert\le C'\exp(C'T)(h^l+\tau).
\end{align}
Moreover, substituting \eqref{est.main} into \eqref{est.main0}, we can get the following estimates on $\Vert e_c^n\Vert_{l^2(H^1)}$
\begin{align}\Vert e_c^{n+1}\Vert^2+\sum_{k=0}^{n}\tau\Vert\nabla e_c^{k+1}\Vert^2\le C(h^l+\tau)^2.\label{est.main-c}\end{align}
Similarly, we have
\begin{align}
	&\sum_{k=0}^{n}\tau^2\Vert\nabla e_\mu^{k+1}\Vert^\frac43_{L^\frac65}\le C\sum_{k=0}^{n}\tau^2\left(\Vert\nabla e_\mu^{k+1}- c^k\nabla (e_c^{k+1}-e_\phi^{k})\Vert^\frac43_{L^\frac65}+\Vert c^k\nabla (e_c^{k+1}-e_\phi^{k})\Vert^\frac43_{L^\frac65}\right)\nonumber\\
	&\leq C\left(\sum_{k=0}^{n}\tau^3\right)^\frac13\left(\sum_{k=0}^{n}\tau^\frac32\Vert\nabla e_\mu^{k+1}- c^k\nabla (e_c^{k+1}-e_\phi^{k})\Vert^2_{L^\frac65}\right)^\frac23\noN\\&
	+C\sum_{k=0}^{n}\left(\tau^3\tau\|c^k\|_{H^1}^2+\tau\Vert \nabla (e_c^{k+1}-e_\phi^{k})\Vert^2\right)\nonumber\\
	&\leq C\tau\left(\sum_{k=0}^{n}\tau\Vert\nabla e_\mu^{k+1}- c^k\nabla (e_c^{k+1}-e_\phi^{k})\Vert^2\right)^\frac23+C\sum_{k=0}^{n}\left(\tau^3\tau\|c^k\|_{H^1}^2+\tau\Vert\nabla (e_c^{k+1}-e_\phi^{k})\Vert^2\right)\nonumber\\
	&\le C\tau \left(h^l+\tau\right)^{\frac43}+C\tau^3+\left(h^l+\tau\right)^2\nonumber.
\end{align}
Thus, assuming $h^l\le \tau$, we have
\begin{align}
	&\sum_{k=0}^{n}\tau\Vert\nabla e_\mu^{k+1}\Vert^\frac43_{L^\frac65} \le C\left(h^l+\tau+\frac{h^{2l}}\tau\right)\leq C\left(h^l+\tau\right).\label{est.main-G}
\end{align}
Combining estimates \eqref{est.main},\eqref{est.main-c} and \eqref{est.main-G}, we get the desired estimate \eqref{est.main-S}. And then the estimate \eqref{est.main-SS} is obtained by using \eqref{pr-reg-1}-\eqref{est.main-S}.

This completes the proof of Theorem \ref{thm.od}.

\section{Convergence analysis}\label{sec-can}
This section establishes that, as $\tau, h \to 0$, the numerical solution of the discrete scheme \eqref{2.phi}--\eqref{2.mu} converges to a weak solution of the continuous model \eqref{1.mu}, even in the absence of the higher regularity assumptions on the solution that were previously required in Section~\ref{sec-er}. Our analysis is inspired by the foundational work of \cite{CE1992}, where a rigorous convergence framework was developed for the Cahn–Hilliard equation with a logarithmic free energy. In contrast to their setting, we do not impose strong regularity assumptions on the solution itself—only mild regularity on the initial data $(\phi_0, c_0)$ is required.

A key innovation in this section lies in the derivation of a novel a priori bound for the discrete chemical potential sequence. Specifically, we obtain an $L^{\frac{4}{3}}(0,T; W^{1,\frac{6}{5}}(\Omega))$ estimate for the numerical chemical potential $\mu^{n+1}$ associated with the time-discrete scheme. This estimate is crucial, as it compensates for the lack of a uniform-in-time $L^2(0,T; H^1(\Omega))$ bound for $\mu^{n+1}$, which is no longer attainable due to the presence of the nonlinear cross-diffusion term in \eqref{1.mu}. This challenge distinguishes our system from the classical Cahn–Hilliard model and necessitates the development of refined analytical tools. 

Define
\begin{equation*}
	\tilde\xi(t) =\frac{t-t_n}{t_{n+1}-t_n}\xi^{n+1}+\frac{t_{n+1}-t}{t_{n+1}-t_n}\xi^n,\,t\in[t_n,t_{n+1}), n=0,\cdots, N-1
\end{equation*}
and
\begin{equation*}
	\xi^-(t) = \xi^n,\,t\in(t_n,t_{n+1}], n=0,\cdots, N-1,\,\xi^+(t) = \xi^{n+1},\,t\in(t_n,t_{n+1}], n=0,\cdots, N-1.
\end{equation*}

The main result of this section on convergence analysis reads as follows.
\begin{thm}\label{thm.cov} Denote $X=W^{1,4}(\Omega)$ when $d=1,2$ and $X=W^{1,6}(\Omega)$ when $d=3$ and $X'$ is the dual space of $X$. Let $S>\frac L2$, $(\phi^n,c^n,\mu^n)$ be the solution to the numerical scheme \eqref{2.phi}-\eqref{2.mu}. Assume that $\phi_0,c_0\in H^1(\Omega)$ and let the assumptions \eqref{apf-1}-\eqref{apf-2} and the initial bound \eqref{er-in} hold. Then there exist functions $(\phi^*,c^*,\mu^*)\in L^\infty([0,T]; H^1(\Omega))\times L^2([0,T]; L^2(\Omega))\times L^2([0,T];H^1(\Omega))$ with $(\partial_t\phi^*,\partial_tc^*)\in L^2([0,T]; (H^1(\Omega))')\times L^{4/3}([0,T]; X')$ such that the following convergence hold, when $h\to 0$ and $\tau\to 0$,
	\begin{align}
		\Vert\tilde\phi -\phi^\pm&\Vert_{L^2([0,T];H^1(\Omega))}\to 0,\quad \Vert\tilde c -c^\pm\Vert_{L^2(0,T;L^2)}\to 0,\label{cog-1}\\
		\tilde\phi,\phi^\pm&\rightarrow \phi^{*} \text{ weakly }^*\text{ in } {L^\infty([0,T];H^1(\Omega))},\label{cog-2}\\
		\partial_t\tilde\phi&\rightarrow \partial_t\phi^{*} \text{ weakly in } {L^2([0,T];(H^1(\Omega))')},\label{cog-3}\\
		\tilde\phi,\phi^\pm&\rightarrow \phi^{*}\text{ strongly in } {L^2([0,T];L^s(\Omega))},\quad 2\le s<6,\label{cog-4}\\
		\tilde c,c^\pm&\rightarrow c^{*} \text{ weakly }^*\text{ in } {L^\infty([0,T];L^2(\Omega))},\label{cog-5}\\
		\tilde c,c^\pm&\rightarrow c^{*} \text{ weakly } \text{ in } {L^2([0,T];H^1(\Omega))},\label{cog-6}\\
		\partial_t\tilde c&\rightarrow \partial_tc^{*} \text{ weakly }\text{ in } {L^{\frac43}([0,T];X')},\label{cog-7}\\
		\tilde c,c^\pm&\rightarrow c^{*}\text{ strongly in } {L^2([0,T];L^2(\Omega))},\label{cog-8}\\
		c^+-\phi^-&\rightarrow c^{*}-\phi^{*} \text{ weakly }\text{ in } {L^2([0,T];H^1(\Omega))},\label{cog-9}\\
		\mu^+&\rightarrow \mu^{*} \text{ weakly }\text{ in } {L^\frac43([0,T];L^{\frac65}(\Omega))},\label{cog-10}\\
		\nabla \mu^+-c^-&\nabla (c^+-\phi^-)\rightarrow \nabla \mu^{*}-c^*\nabla (c^*-\phi^*) \text{ weakly }\text{ in } {L^\frac43([0,T];L^\frac65(\Omega))}\label{cog-11}
	\end{align}
	and $(\phi^*,c^*,\mu^*)$ is the solution of the system \eqref{1.mu} -\eqref{1.ic} in the sense of \begin{align}
		\int_0^T\bigg(\left\langle \partial_t\phi^*,\xi \right\rangle,\chi\bigg) ds =& \int_0^T\bigg(-\left\langle \nabla \mu^*-c^*\nabla \big(c^*-\phi^*\big),\nabla \xi\right\rangle,\chi\bigg)ds,\label{sou-1}\\
		\int_0^T\bigg(\left\langle \partial_tc^*,\eta\right\rangle,\zeta\bigg)ds =& \int_0^T\bigg(\left\langle c^-\nabla \mu^*-(c^*)^2\nabla \big(c^{*}-\phi^*\big)- \nabla \big(c^{*}-\phi^*\big),\nabla \eta\right\rangle,\zeta\bigg)ds, \label{sou-2}\\
		\int_0^T\bigg(\langle\mu^{*},\sigma\rangle,\upsilon\bigg)ds =& \int_0^T\bigg(\left\langle \nabla \phi^{*}, \nabla \sigma\right\rangle,\upsilon\bigg)ds+\int_0^T\bigg(\left\langle \frac1{\varepsilon^2}f(\phi^{*})- c^{*},\sigma\right\rangle,\upsilon\bigg)ds,\label{sou-3}
	\end{align} for any $\xi(x)\in W^{1,6}(\Omega), \eta(x)\in W^{1,6}(\Omega), \sigma(x)\in H^1(\Omega)$ and any $\chi(t)\in L^4(0,T), \zeta(t)\in L^4(0,T), \upsilon(t)\in L^4(0,T)$. Moreover, $(\phi^*, c^*)(t=0)=(\phi_0(x), c_0(x))$ holds in the sense of $(H^1(\Omega))'\times X'$.
\end{thm}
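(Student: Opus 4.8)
The plan is to establish the convergence by a compactness argument, building on the uniform a priori estimates from Lemma \ref{lemma-est} and the new estimate from Lemma \ref{lemma-est-1}. First, I would use Lemma \ref{lemma-est} to extract the uniform bounds: $\|\nabla\phi^{n+1}\|$, $\|\phi^{n+1}\|$, $\|c^{n+1}\|$ are bounded uniformly in $n,h,\tau$, and the summed quantities $\sum_k\|\delta_t c^{k+1}\|^2$, $\sum_k\|\nabla(\delta_t\phi^{k+1})\|^2$, $\sum_k\tau\|\nabla\mu^{k+1}-c^k\nabla(c^{k+1}-\phi^k)\|^2$, $\sum_k\tau\|\nabla(c^{k+1}-\phi^k)\|^2$ are all bounded. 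Translating these to the piecewise-linear interpolant $\tilde\phi,\tilde c$ and the piecewise-constant functions $\phi^\pm,c^\pm,\mu^\pm$, this immediately yields: $\tilde\phi,\phi^\pm$ bounded in $L^\infty(H^1)$; $\tilde c,c^\pm$ bounded in $L^\infty(L^2)$; $c^+-\phi^-$ (equivalently $c^{k+1}-\phi^k$) bounded in $L^2(H^1)$; and the combined flux $\nabla\mu^+-c^-\nabla(c^+-\phi^-)$ bounded in $L^2(L^2)$, hence in $L^{4/3}(L^{6/5})$. For the chemical potential itself, I would split $\nabla\mu^+ = \big(\nabla\mu^+-c^-\nabla(c^+-\phi^-)\big) + c^-\nabla(c^+-\phi^-)$ and bound the second term in $L^{4/3}(L^{6/5})$ using the Hölder/interpolation estimate underlying Lemma \ref{lemma-est-1} together with $\|c^n\|_{L^2}\le C$ and $\sum_k\tau\|\nabla(c^{k+1}-\phi^k)\|^2\le C$; combined with the bound \eqref{basic-2a} on the mean $\overline{\mu^{n+1}}$ and Poincaré, this gives $\mu^+$ bounded in $L^{4/3}(W^{1,6/5})$. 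This is precisely the "novel a priori estimate" the introduction advertises, and verifying it carefully (especially the interpolation $\|c^n\|_{L^3}^{4/3}\le\|c^n\|_{L^2}^{2/3}\|c^n\|_{H^1}^{2/3}$-type step feeding into a summable bound) is one delicate point, though it is essentially contained in Lemma \ref{lemma-est-1}.

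Next I would derive uniform bounds on the discrete time derivatives in negative-order spaces. Testing \eqref{2.phi} with $\xi\in H^1(\Omega)$ (using $P_{\X_h}\xi$ as the discrete test function and the $L^2$-projection approximation property) and using the $L^2(L^2)$ bound on the flux gives $\partial_t\tilde\phi$ bounded in $L^2((H^1)')$. Testing \eqref{2.c} with $\eta\in X=W^{1,4}$ (or $W^{1,6}$ for $d=3$) and using the $L^{4/3}(L^{6/5})$ bound on $c^-\nabla\mu^+ - ((c^-)^2+1)\nabla(c^+-\phi^-)$ — here I use that $\|(c^n)^2\nabla(c^{n+1}-\phi^n)\|_{L^{6/5}}\le\|c^n\|_{L^6}\|c^n\|_{L^3}\|\nabla(c^{n+1}-\phi^n)\|$ and $\|c^n\|_{L^6}$ is controlled by $\|c^n\|_{H^1}$, which is \emph{not} uniformly bounded but is $\tau$-summable in the relevant power — gives $\partial_t\tilde c$ bounded in $L^{4/3}(X')$. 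With these bounds in hand, the Aubin–Lions–Simon lemma applies: $\{\tilde\phi\}$ is relatively compact in $L^2(L^s)$ for $2\le s<6$ (interpolating between $L^\infty(H^1)\hookrightarrow\hookrightarrow L^s$ and $L^2((H^1)')$), giving \eqref{cog-4}; and $\{\tilde c\}$ is relatively compact in $L^2(L^2)$ (from $L^2(H^1)\hookrightarrow\hookrightarrow L^2$ and $L^{4/3}(X')$) once I also record that $c^+-\phi^-$ is bounded in $L^2(H^1)$ and $\phi^-$ in $L^\infty(H^1)$, so $c^-$ itself is bounded in $L^2(H^1)$; this yields \eqref{cog-8}. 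Extract a subsequence realizing all the weak-$*$, weak, and strong limits \eqref{cog-1}--\eqref{cog-11} simultaneously. The identifications $\tilde\phi,\phi^\pm\to\phi^*$ and $\tilde c,c^\pm\to c^*$ to a \emph{common} limit follow from $\|\tilde\phi-\phi^\pm\|_{L^2(H^1)}\to0$ and $\|\tilde c-c^\pm\|_{L^2(L^2)}\to0$, which are consequences of the $\tau$-summable bounds on $\delta_t\phi$ and $\delta_t c$ (the standard estimate $\|\tilde\phi-\phi^+\|^2_{L^2(H^1)}\le C\tau\sum_k\|\nabla\delta_t\phi^{k+1}\|^2\to0$, etc.).

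Finally I would pass to the limit in the scheme. Fix $\xi\in W^{1,6}(\Omega)$, $\chi\in L^4(0,T)$; write \eqref{2.phi} in the interpolated form as an identity integrated against $\chi(t)$ over $[0,T]$, replacing the discrete test function by $P_{\X_h}\xi$ and controlling the projection error via the approximation property together with the uniform flux bound. The linear terms pass by the weak convergences \eqref{cog-3} and \eqref{cog-11}. For \eqref{2.c} tested against $\eta\in W^{1,6}$, $\zeta\in L^4$, the term $c^-\nabla\mu^+-(c^-)^2\nabla(c^+-\phi^-)$ I would rewrite as $c^-\big(\nabla\mu^+-c^-\nabla(c^+-\phi^-)\big)$ and pass using the strong $L^2(L^2)$ (indeed $L^2(L^s)$, $s<6$) convergence of $c^-$ against the weak $L^{4/3}(L^{6/5})$ convergence \eqref{cog-11} of the flux — a strong$\times$weak product in dual Lebesgue exponents — together with $\eta\nabla(c^-\cdot(\text{flux}))$ handled by integration by parts back onto $\nabla\eta$; note \eqref{sou-2} correctly keeps $c^-$ (not $c^*$) in the first factor because only the flux factor converges weakly, while the explicit $c^-$ in front of $\nabla\mu^+$ must actually be argued to converge strongly, which it does. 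For \eqref{2.mu} tested against $\sigma\in H^1$, $\upsilon\in L^4$, the only nonlinearity is $f(\phi^n)$: since $f'$ is bounded \eqref{apf-2} and $\phi^-\to\phi^*$ strongly in $L^2(L^2)$, $f(\phi^-)\to f(\phi^*)$ strongly in $L^2(L^2)$, and the stabilization term $S(\phi^{n+1}-\phi^n)/\varepsilon^2=S\tau\,\delta_t\phi^{n+1}/\varepsilon^2\to0$ because $\sum_k\tau\|\delta_t\phi^{k+1}\|^2\le\tau\cdot\sum_k\tau\|\nabla\delta_t\phi^{k+1}\|^2/\text{(Poincaré, up to mean)}$ — more carefully, $\|\phi^{n+1}-\phi^n\|$ is controlled and multiplied by $\tau$ so its $L^2(0,T;L^2)$ norm is $O(\sqrt\tau)$. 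This yields \eqref{sou-1}--\eqref{sou-3}. The initial condition $(\phi^*,c^*)(0)=(\phi_0,c_0)$ in $(H^1)'\times X'$ follows from \eqref{er-in}, the continuity in time of $\tilde\phi,\tilde c$ (they are piecewise linear, hence in $C([0,T];L^2)$ at the discrete level), and the weak convergence of the time derivatives, via the standard argument that $\tilde\phi(0)=\phi^0\to\phi_0$ and $\partial_t\tilde\phi\rightharpoonup\partial_t\phi^*$ imply $\phi^*(0)=\phi_0$ in the dual space. \textbf{The main obstacle} I anticipate is the limit passage in the cross-diffusion term of \eqref{2.c}: one must resist identifying the limit of $c^-\nabla\mu^+$ naively, instead isolating the combined flux $\nabla\mu^+-c^-\nabla(c^+-\phi^-)$ (the only quantity with a uniform bound and hence a weak limit) and separately arguing strong convergence of the multiplier $c^-$ in a Lebesgue space dual to $L^{6/5}$ — this is exactly why the strong $L^2(L^s)$ convergence with $s<6$ (not merely $s=2$) is needed, and why \eqref{sou-2} is written with the asymmetric $c^-$ in the first slot. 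A secondary subtlety is ensuring the $W^{1,6}$ test functions in \eqref{sou-1}--\eqref{sou-2} are admissible: their $L^2$-projections onto $\X_h$ converge strongly in $W^{1,6/5}$'s dual pairing sense, which requires the approximation property of $\X_h$ at the $H^1$ level plus an inverse-type or density argument, since the projection estimate is stated in $L^2$ and $H^1$ norms only.
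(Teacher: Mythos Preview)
Your proposal is correct and follows essentially the same route as the paper: uniform energy estimates from Lemma~\ref{lemma-est}, the split $\nabla\mu^+=(\nabla\mu^+-c^-\nabla(c^+-\phi^-))+c^-\nabla(c^+-\phi^-)$ to get the $L^{4/3}(W^{1,6/5})$ bound on $\mu^+$, dual bounds on $\partial_t\tilde\phi,\partial_t\tilde c$, Aubin--Lions compactness, and passage to the limit via strong$\times$weak products. Two small corrections: (i) your H\"older split $\|(c^n)^2\nabla(c^{n+1}-\phi^n)\|_{L^{6/5}}\le\|c^n\|_{L^6}\|c^n\|_{L^3}\|\nabla(c^{n+1}-\phi^n)\|$ has mismatched exponents---the paper (and you, later) instead groups the right-hand side of \eqref{2.c} as $c^n\big(\nabla\mu^{n+1}-c^n\nabla(c^{n+1}-\phi^n)\big)$ and bounds $\|c^n\|_{L^4}\|\text{flux}\|_{L^2}\|\nabla\eta\|_{L^4}$, which avoids the $(c^n)^2$ altogether; (ii) the $c^-$ appearing in \eqref{sou-2} is a typo in the paper (it should be $c^*$), so do not try to rationalize it---your own strong$\times$weak argument gives $c^*$ in the limit, as it must. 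For the projection subtlety you flag at the end, the paper invokes the $W^{1,p}$ stability of the $L^2$ projection, $\|\nabla P_{\X_h}\eta\|_{L^p}\le C\|\nabla\eta\|_{L^p}$, citing Crouzeix--Thom\'ee~\cite{CT87}.
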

\begin{proof}
	Firstly, under the assumptions of Theorem \ref{thm.cov}, it is easy to know that the basic estimate \eqref{basic-1} holds for some positive constant $C$. Now we establish an estimate of $\mu$ uniformly in $h>0$ and $\tau>0$. In fact, on one hand, it is easy to get that
	\begin{align}
		&\sum_{k=0}^{N}\tau\Vert \nabla \mu^{k+1}\Vert_{L^\frac65}^\frac43\le C\sum_{k=0}^{N}\tau(\Vert\nabla \mu^{k+1}-c^k\nabla (c^{k+1}-\phi^k)\Vert^\frac43+\Vert c^k\nabla (c^{k+1}-\phi^k)\Vert^\frac43_{L^\frac65})\nonumber\\
		&\le C(T)\left(\sum_{k=0}^{N}\tau\Vert\nabla \mu^{k+1}-c^k\nabla (c^{k+1}-\phi^k)\Vert^2\right)^\frac23+C\sum_{k=0}^{N}\tau\Vert c^k\nabla (c^{k+1}-\phi^k)\Vert^\frac43_{L^\frac65}.\nonumber
	\end{align}
	On the other hand, according to the H\"older's inequality, we have
	\begin{align}
		\sum_{k=0}^{N}\tau\Vert c^k\nabla (c^{k+1}-\phi^k)\Vert_{L^\frac65}^{\frac43}
		\le \left(\sum_{k=0}^{N}\tau \left(\int_\Omega \vert c^k\vert^3 dx\right)^\frac{4}3\right)^\frac13\left(\sum_{k=0}^{N}\tau \Vert \nabla (c^{k+1}-\phi^k)\Vert^2 \right)^\frac23\nonumber
	\end{align}
	and
	\begin{align}
		&\left(\sum_{k=0}^{N}\tau\left(\int_\Omega \vert c^k\vert^3 dx\right)^\frac{4}3\right)^\frac14\le\left(\sum_{k=0}^{N}\tau \left(\int_\Omega \vert c^k\vert^2 dx\right)^\frac{2}3\left(\int_\Omega \vert c^k\vert^4 dx\right)^\frac{2}3\right)^\frac14\noN\\
		&\le\max_{0\le k\le N}\Vert c^k\Vert^\frac12\left(\sum_{k=0}^{N}\tau \Vert c^k\Vert^2_{H_1}\right)^\frac14.\label{basic-100}
	\end{align}
	
	Hence, recalling \eqref{2.phi}, \eqref{2.mu} and using estimates \eqref{basic-1}-\eqref{basic-2a} in Lemma \ref{lemma-est}, by the Poincare's inequality and the Young's inequality, we have
	\begin{align}
		&\sum_{k=0}^N\tau\Vert\mu^{n+1}\Vert^\frac43_{L^\frac65} \leq \sum_{k=0}^N\tau\Vert\overline{\mu^{n+1}}\Vert^\frac43_{L^\frac65} +C\sum_{k=0}^N\tau\Vert\nabla\mu^{n+1}\Vert^\frac43_{L^\frac65}\leq C\label{basic-2}
	\end{align}
	and
	\begin{align}
		&\Vert\phi^{n+1}\Vert^2+\sum_{k=0}^{n-1}\Vert\delta_t\phi^{k+1}\Vert^2 \leq \sum_{k=0}^n\tau\Vert\nabla \mu^{k+1}-c^k\nabla (c^{k+1}-\phi^k)\Vert^2+C(n+1)\tau\leq C. \label{basic-3}
	\end{align}
	Thus, using the assumption \eqref{apf-2} and by the Poincare's inequality and inequalities \eqref{basic-1}, \eqref{basic-100}, \eqref{basic-2}-\eqref{basic-3}, we have, for $n\in 0,\cdots,N-1$,
	\begin{align}
		&\max_{n=0,\cdots,N-1}\left\{\Vert\phi^{n+1}\Vert_{H^1}^2 + \Vert c^{n+1}\Vert^2\right\}+\,\sum_{k=0}^{n}\Vert \delta_t c^{k+1}\Vert^2+\sum_{k=0}^n\Vert(\delta_t\phi^{k+1})\Vert_{H^1}^2\leq C,\label{es-1}\\
		&\sum_{k=0}^n\tau\Vert\nabla (c^{k+1}-\phi^k)\Vert^2+\sum_{k=0}^n\tau\Vert c^{k+1}\Vert^2_{H^1}+\sum_{k=0}^n\tau\Vert c^{k+1}\Vert^4_{L^3}\leq C,\label{es-2}\\
		&\sum_{k=0}^n\tau\Vert\nabla \mu^{k+1}-c^k\nabla (c^{k+1}-\phi^k)\Vert^2+\sum_{k=0}^n\tau\Vert\mu^{k+1}\Vert_{W^{1,\frac65}}^\frac43\leq C.\label{es-3}
	\end{align}
	
	Furthermore, 
	it follows from \eqref{2.phi} that
	\begin{align}
		&\tau\left\Vert\frac{\delta_t\phi^{n+1}}\tau\right\Vert_{(H^1)'}^2 \leq C\tau\Vert\nabla \mu^{n+1}-c^n\nabla \big(c^{n+1}-\phi^n\big)\Vert^2,\label{basic-4}
	\end{align}
	and, by using $\Vert\nabla P_{\X_h}\eta\Vert_{L^p}\le C\Vert\nabla \eta\Vert_{L^p}$ (see \cite{CT87}) for some positive constant $C>0$ and any $p\in [2, \infty)$, it follows from \eqref{2.c} that, for any $\eta\in W^{1,4}(\Omega)\subset L^2(\Omega)$,
	\begin{align}
		&\tau^{-1}\vert\langle\delta_t c^{n+1},\eta\rangle\vert=\tau^{-1}\vert\langle\delta_t c^{n+1},P_{\X_h}\eta\rangle\vert\notag\\
		&\leq C\big(\Vert\nabla \mu^{n+1}-c^n\nabla (c^{n+1}-\phi^n)\Vert \Vert  c^n\Vert_{L^4}+\Vert\nabla \big(c^{n+1}-\phi^n\big)\Vert\big) \Vert\eta\Vert_{W^{1,4}(\Omega)},\notag
	\end{align} which yields
	\begin{align}\tau^{-1}\Vert \delta_t c^{n+1}\Vert _{(W^{1,4}(\Omega))'}\le C\Big(\Vert\nabla \mu^{n+1}-c^n\nabla (c^{n+1}-\phi^n)\Vert \Vert  c^n\Vert_{L^4}+\Vert\nabla \big(c^{n+1}-\phi^n\big)\Vert\Big). \notag\end{align}
	Using $(a+b)^{\frac43}\leq2^{\frac43}(a^{\frac43}+b^{\frac43})$ for all $a,b\geq0$ and the H\"older's inequality, we have
	\begin{align}
		\sum_{k=0}^{N-1}\tau\left\Vert \frac{\delta_t c^{k+1}}\tau\right\Vert _{(W^{1,4}(\Omega))'}^{\frac43}&\le C\tau\left(\sum_{k=0}^{N-1}\Vert\nabla \mu^{k+1}-c^k\nabla (c^{k+1}-\phi^k)\Vert^{\frac43} \Vert c^k\Vert_{L^4}^{\frac43}\right)\noN\\&+ C\tau\left(\sum_{k=0}^{N-1}\Vert\nabla \big(c^{k+1}-\phi^k\big)\Vert^{\frac43}\right)\nonumber\\
		&\le C\left(\sum_{k=0}^{N-1}\tau\Vert\nabla \mu^{k+1}-c^k\nabla (c^{k+1}-\phi^k)\Vert^2\right)^{\frac23} \left(\sum_{k=0}^{N-1}\tau\Vert c^k\Vert_{L^4}^4\right)^{\frac13}\noN\\&+C\left(\tau\sum_{k=0}^{N-1}\Vert\nabla \big(c^{k+1}-\phi^k\big)\Vert^2\right)^{\frac23}\left(\tau\sum_{k=0}^{N-1}1\right)^{\frac13}. \notag
	\end{align}
	By the Cauchy's inequality, and using the embedding inequality $\Vert v\Vert _{L^4}\le \Vert v\Vert _{L^2}^{\frac12}\Vert v\Vert _{H^1(\Omega)}^{\frac12}$ when $d=1$ and \eqref{basic-1}, we have
	\begin{align}
		\sum_{k=0}^{N-1}\tau\left\Vert \frac{\delta_t c^{k+1}}\tau\right\Vert _{(W^{1,4}(\Omega))'}^{\frac43}&\le C\left(\sum_{k=0}^{N-1}\tau\Vert\nabla \mu^{k+1}-c^k\nabla (c^{k+1}-\phi^k)\Vert^2\right)^{\frac23} \left(\sum_{k=0}^{N-1}\tau\Vert c^k\Vert_{L^2}^2\Vert c^k\Vert ^2_{H^1(\Omega)}\right)^{\frac13}\notag\\
		&\phantom{xx}+C\left(\tau\sum_{k=0}^{N-1}\Vert\nabla \big(c^{k+1}-\phi^k\big)\Vert^2\right)^{\frac23}\left(\sum_{k=0}^{N-1}\tau\right)^{\frac13}\le C<\infty.\label{ct-est}\end{align}
	
	\begin{rem}{If $d=2$, we can use the Gagliardo-Nirenberg inequality $\Vert c\Vert_{L^4}\le C\Vert c\Vert_{L^2}^{\frac12}\Vert \nabla c\Vert_{L^2}^{\frac12}$ to prove the bound \eqref{ct-est} in the above line. If $d=3$, we can use $\Vert c\Vert_{L^3}\le C\Vert c\Vert_{L^2}^{\frac12}\Vert c\Vert_{L^6}^{\frac12}$ and consider $\Vert \frac{\delta_t c^{k+1}}\tau\Vert _{(W^{1,6}(\Omega))'}^{\frac43}$ instead.}\end{rem}
	
	Therefore, by \eqref{basic-4} and \eqref{ct-est}, we have
	\begin{align}
		\tau\sum_{k=0}^n\left(\Vert\tau^{-1}{\delta_t\phi^{k+1}}\Vert_{(H^1)'}^2 + \Vert\tau^{-1}{\delta_t c^{k+1}}\Vert_{X'}^{\frac43}\right)\leq C.\label{de-st}
	\end{align}
	
	Using \eqref{basic-1}, \eqref{es-1}-\eqref{es-3} and \eqref{de-st}, we have the following estimates uniformly in $h>0$ and $\tau>0$
	\begin{align}
		&\Vert(\tilde\phi,\phi^\pm)\Vert_{L^\infty([0,T];H^1(\Omega))}^2+\frac1\tau\Vert\tilde\phi -\phi^\pm\Vert_{L^2([0,T];H^1(\Omega))}^2+\Vert\tilde\phi_t\Vert_{L^2([0,T];(H^{1}(\Omega))')}^2+\Vert\tilde c_t\Vert_{L^{4/3}([0,T];X')}^2\notag\\
		&+\Vert(\tilde c,c^+,c^-)\Vert_{L^\infty([0,T];L^2(\Omega))}^2+\Vert(\tilde c,c^+,c^-)\Vert_{L^2([0,T];H^1(\Omega))}^2+\Vert(\tilde c,c^+,c^-)\Vert_{L^4([0,T];L^3(\Omega))}^4\nonumber\\
		&+\frac1\tau\Vert\tilde c -c^\pm\Vert_{L^2([0,T];L^2(\Omega))}^2+\Vert\mu^+\Vert_{L^\frac43([0,T];W^{1,\frac65}(\Omega))}^\frac43\notag\\
		&+\Vert c^+-\phi^-\Vert_{L^2([0,T];H^1(\Omega))}^2+\Vert\nabla \mu^+-c^-\nabla (c^+-\phi^-)\Vert_{L^2([0,T];L^2(\Omega))}^2\leq C.\label{est-fin}
	\end{align}
	
	Next, we obtain some convergence results by the compactness theory. Firstly, it follows from bounds \eqref{est-fin} uniformly in $h>0$ and $\tau>0$ that there exist $\phi^*,\phi^{*\pm},c^*, c^{*\pm},\mu^*$ and $g^*$ such that $\phi^*=\phi^{*\pm},c^*=c^{*\pm}$, $(\phi^*,c^*,\mu^*)\in L^\infty(0,T;H^1(\Omega))\times L^2([0,T]; L^2(\Omega))\times L^\frac43([0,T];W^{1,\frac65}(\Omega))$, $(\partial_t\phi^*,\partial_tc^*)\in L^2([0,T]; (H^1(\Omega))')\times L^{4/3}([0,T]; X')$, and the convergence properties \eqref{cog-1}-\eqref{cog-3},\eqref{cog-5}-\eqref{cog-7}, \eqref{cog-9}-\eqref{cog-10} hold when $(h,\tau)\rightarrow 0$. Then, by the compact Sobolev's embedding $H^1\hookrightarrow\hookrightarrow L^s$ for any $s\in [2,6)$ when $d=1,2,3$ and Lions-Aubin lemma, the convergence properties  \eqref{cog-4} and \eqref{cog-8} hold when $(h,\tau)\rightarrow 0$. Also, by combining the strong convergence of $c^{-}$ in $L^2([0,T]; L^2(\Omega))$ and the weak convergence of $\nabla (c^+-\phi^-)$ and $c^-\nabla (c^+-\phi^-)$ in $L^2([0,T]; L^2(\Omega))$, and using \eqref{cog-10}, we have
	\begin{align}\nabla \mu^+-c^-\nabla (c^+-\phi^-)\rightarrow \nabla \mu^*-c^*\nabla (c^*-\phi^*) \text{ weakly }\text{ in } {L^\frac43([0,T];L^\frac65(\Omega))},\label{non-1}\end{align}
	which yields the desired convergence \eqref{cog-11}. Similarly, we can get another weak convergence property for the nonlinear term in the right-hand side of the equation \eqref{sou-2}
	\begin{align}c^-\big(\nabla \mu^+-c^-\nabla (c^+-\phi^-)\big)\rightarrow c^*\big(\nabla \mu^*-c^*\nabla (c^*-\phi^*)\big) \text{ weakly in } {L^\frac43([0,T];L^\frac65(\Omega))}.\label{non-2}\end{align}
	
	
	Finally, we prove $\phi^*,c^*,\mu^*$ is a solution of \eqref{1.phi} - \eqref{1.mu}. To do this, we rewrite the system \eqref{2.phi}-\eqref{2.mu} in $[t_n,t_{n+1}]$ in the following form
	\begin{align}
		\int_0^T\bigg(\left\langle \partial_t\tilde\phi,\xi_1 \right\rangle ,\chi_1\bigg) ds =& \int_0^T\bigg(-\left\langle \nabla \mu^+-c^-\nabla \big(c^+-\phi^-\big),\nabla \xi_1\right\rangle,\chi_1\bigg)ds,\label{app-1}\\
		\int_0^T\bigg(\left\langle \partial_t\tilde c,\eta_1\right\rangle,\zeta_1\bigg)ds =& \int_0^T\bigg(\left\langle c^-\nabla \mu^+-2(c^-)^2\nabla \big(c^{+}-\phi^-\big)- \nabla \big(c^{+}-\phi^-\big),\nabla \eta_1\right\rangle,\zeta_1\bigg)ds,\label{app-2} \\
		\int_0^T\bigg(\langle\mu^{+},\sigma_1\rangle,\upsilon_1\bigg)ds =& \int_0^T\bigg(\left\langle \nabla \phi^{+}, \nabla  \sigma_1\right\rangle,\upsilon_1\bigg)ds\notag\\
		&+\int_0^T\bigg(\left\langle \frac1{\varepsilon^2}\bigg( f(\phi^{-})+S(\phi^{+}-\phi^-)\bigg)- c^{+},\sigma_1\right\rangle,\upsilon_1\bigg)ds\label{app-3}
	\end{align} for any $(\xi_1,\eta_1, \sigma_1)\in (\X_h)^3$ and $(\chi_1(t), \zeta_1(t),\upsilon_1(t))\in (C^1[0,T])^3$.
	
	Since $C^\infty(\overline{\Omega})$ is dense in $H^1(\Omega)$ and also in $W^{1,6}(\Omega)$ and $C^\infty_0((0,T))$ is dense in $L^4(0,T)$, we just prove the system \eqref{sou-1}-\eqref{sou-3} holds for $(\xi(x),\eta(x),\sigma(x))\in (C^\infty(\overline{\Omega}))^3, (\chi(t), \zeta(t),\upsilon(t))\in (C^\infty_0((0,T)))^3$. Thus, for any $(\xi(x),\eta(x),\sigma(x))\in C^\infty(\overline{\Omega}), (\chi(t), \zeta(t),\upsilon(t))\in C^\infty_0((0,T))$, we take $\xi_1=P_{\X_h}\xi, \eta_1=P_{\X_h}\eta, \sigma_1=P_{\X_h}\sigma, (\chi_1(t), \zeta_1(t),\upsilon_1(t))=(\chi(t), \zeta(t),\upsilon(t))$ in the system \eqref{app-1}-\eqref{app-3}. Thus, it holds that $\Vert (P_{\X_h}\xi,P_{\X_h}\eta,P_{\X_h}\sigma)-(\xi,\eta,\sigma)\Vert _{W^{1,6}(\Omega)}\to 0$ as $h\to 0$.
	
	Noting that $f(\phi)$ is Lipschtz continuous, by using the convergence properties \eqref{cog-1}-\eqref{cog-11} and \eqref{non-1}-\eqref{non-2}, and by passing the limit $h\to 0$ and $\tau\to 0$ in the system \eqref{app-1}-\eqref{app-3}, we can get
	\begin{align*}
		\int_0^T\bigg(\left\langle \partial_t\phi^*,\xi \right\rangle,\chi\bigg) ds =& \int_0^T\bigg(-\left\langle \nabla \mu^*-c^*\nabla \big(c^*-\phi^*\big),\nabla \xi_1\right\rangle,\chi_1\bigg)ds,\\
		\int_0^T\bigg(\left\langle \partial_tc^*,\eta\right\rangle,\zeta\bigg)ds =& \int_0^T\bigg(\left\langle c^-\nabla \mu^*-(c^*)^2\nabla \big(c^{*}-\phi^*\big)- \nabla \big(c^{*}-\phi^*\big),\nabla \eta\right\rangle,\zeta\bigg)ds, \\
		\int_0^T\bigg(\langle\mu^{*},\sigma\rangle,\upsilon\bigg)ds =& \int_0^T\bigg(\left\langle \nabla \phi^{*}, \nabla \sigma\right\rangle,\upsilon\bigg)ds+\int_0^T\bigg(\left\langle \frac1{\varepsilon^2}f(\phi^{*})- c^{*},\sigma\right\rangle,\upsilon\bigg)ds
	\end{align*} for any $(\xi(x),\eta(x),\sigma(x))\in (C^\infty(\overline{\Omega}))^3, (\chi(t), \zeta(t),\upsilon(t))\in (C^\infty_0((0,T)))^3$. By the denseness, the system \eqref{sou-1}-\eqref{sou-3} holds in the stated sense.
	
	Moreover, by integrating by parts with respect to the time $t$ in the system \eqref{app-1}-\eqref{app-2} and passing the limit $h\to 0$ and $\tau\to 0$, we can obtain $(\phi^*(t=0)-\phi_0(x), \xi)=0$ holds for any $\xi\in H^1(\Omega)$ and $(c^*(t=0)- c_0(x), \eta)=0$ holds for any $\eta\in W^{1,4}(\Omega)$ when $d=1,2$ and $\eta\in W^{1,6}(\Omega)$ when $d=3$. This completes the proof of Theorem \ref{thm.cov}.
\end{proof}

\section{Numerical results}\label{sec-nr}
In this section, we use numerical experiments to verify the analysis in previous sections.  We use the standard double well potential \textcolor{blue}{$F(u) = \frac14(1-u^2)^2$} without truncation and $S=1$. Numerical experiments were performed using piecewise linear Lagrangian finite elements using Fenics. Good coordination was shown in the results.

In the first example, we test the temporal and spatial orders of convergence of our numerical scheme \eqref{2.phi}-\eqref{2.mu} at {$T=0.128$ in $(0,2\pi)^2$}. The initial conditions are chosen as {$\phi(0) = 0.05\cos(x)\cos(y)+0.3, c(0) = 0.05\cos(2x)\cos(2y)+0.5$}. Other parameters are chosen as {$\varepsilon = 0.3$}.

The temporal order of convergence is computed in the following way. First, a reference solution is calculated at {$\tau = 0.0005$ and $N_x\times N_y = 128\times128$} where $N_x$ denotes the number of intervals the domain is divided into. Then, with $N_x$ fixed, we change $\tau$ to compute the solution and a numerical error at {$T=128\times10^{-3}$}. Afterwards, the order of convergence is calculated accordingly. The spatial order is computed similarly. First, a reference solution is calculated at {$\tau = 10^{-3}$ and $N_x\times N_y = 256\times256$}. Then, with the time discretization is fixed, the spatial order is computed accordingly. The results are shown in Table \ref{table.1} and Table \ref{table.2}. The numerical simulation indicates that the temporal convergence rate is first order and the spatial convergence rate is second order.
\begin{table}
	\centering
	\begin{tabular}{ccccccc}
		\hline
		$\tau(\times10^{-3}$) & $\Vert \phi(T)-\phi_{\rm ref}(T)\Vert _{H^1}$ & rate & $\Vert c(T)-c_{\rm ref}(T)\Vert _{H^1}$ & rate & $\Vert \mu(T)-\mu_{\rm ref}(T)\Vert _{H^1}$ &rate\\
		\hline
		1 &  2.7729e-02 &      &  5.8986e-03 &      &  3.0377e-01 &      \\
		2 &  7.4492e-02 & 1.43 &  1.5870e-02 & 1.43 &  8.1141e-01 & 1.42 \\
		4 &  1.4313e-01 & 0.94 &  3.0615e-02 & 0.95 &  1.5453e+00 & 0.93 \\
		8 &  2.2257e-01 & 0.64 &  4.8135e-02 & 0.65 &  2.3811e+00 & 0.62 \\
		16 &  2.8043e-01 & 0.33 &  6.2216e-02 & 0.37 &  2.9998e+00 & 0.33 \\
		\hline
	\end{tabular}
	\caption{Temporal convergence rate.}
	\label{table.1}
	\centering
	\begin{tabular}{ccccccc}
		\hline
		$h(\times2\pi)$ & $\Vert \phi(T)-\phi_{\rm ref}(T)\Vert _{H^1}$ & rate & $\Vert c(T)-c_{\rm ref}(T)\Vert _{L^2}$ & rate & $\Vert \mu(T)-\mu_{\rm ref}(T)\Vert _{H^1}$ &rate\\
		\hline
		1/128 &  1.9815e-03 &      &  1.2892e-04 &      &  5.4552e-03 &      \\
		1/64 &  9.7946e-03 & 2.31 &  6.4359e-04 & 2.32 &  2.6888e-02 & 2.30 \\
		1/32 &  3.9258e-02 & 2.00 &  2.6982e-03 & 2.07 &  1.0698e-01 & 1.99 \\
		1/16 &  1.3022e-01 & 1.73 &  1.1068e-02 & 2.04 &  3.6181e-01 & 1.76 \\
		1/8 &  2.7451e-01 & 1.08 &  3.4138e-02 & 1.63 &  9.3435e-01 & 1.37 \\
		\hline
	\end{tabular}
	\caption{Spatial convergence rate.}
	\label{table.2}
\end{table}

In the second example, we test our numerical scheme's temporal and spatial convergence orders \eqref{2.phi}-\eqref{2.mu} concerning the norm defined in Theorem \ref{thm.od} in $(0,2\pi)$. The same initial conditions in the first experiments are used. In the numerical experiments, convergence rates of the first order in time and the second order in space can be observed. The observed second order convergence is probably attribute to the interpolation of the finer mesh reference solution onto the coarser mesh.
\begin{figure}[htp!]
	\centering
	\includegraphics[width=35mm]{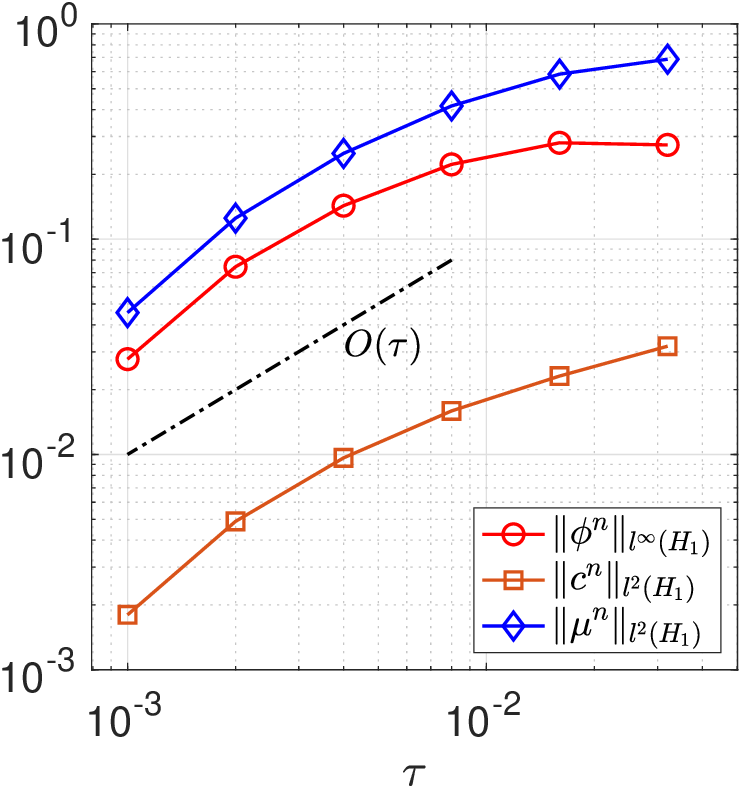}
	\includegraphics[width=35mm]{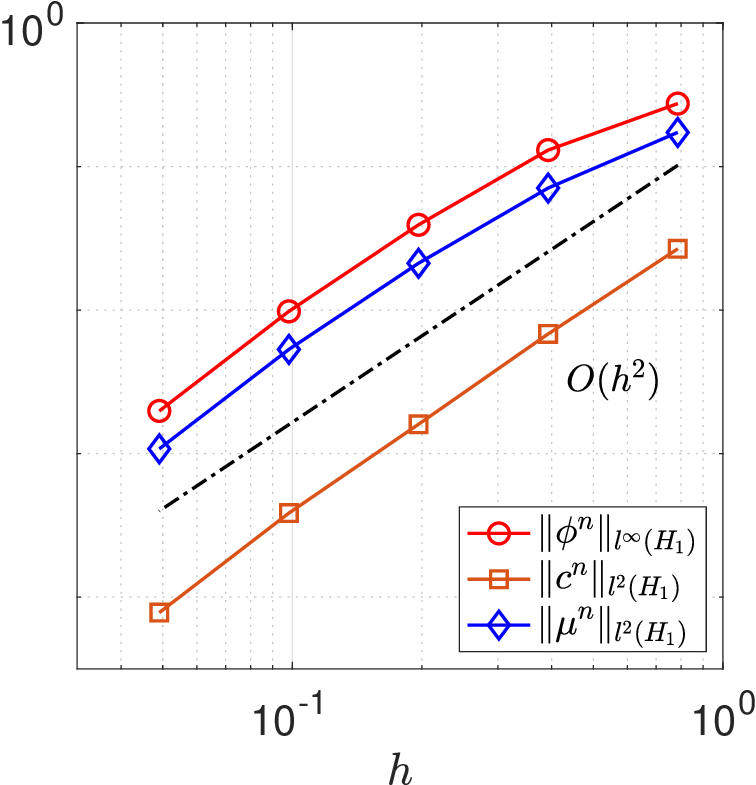}
	\caption{Disrete numerical errors for various values of the time step $\tau$ and spatial mesh size $h$.}
	\label{fig.1.tau}
\end{figure}

{In the third experiment, we present a morphological simulation with $\epsilon = 0.3$ and a constant mobility $g(c) = 0.01$. As $g(c)$ remains constant, the previous analysis still applies. In addition, this choice helps maintain the positivity of $c$ without significantly affecting the morphological dynamics. The numerical results is similar to those in \cite{RoFo08}.}
\begin{figure}[H]
	\centering
	
	\includegraphics[width=0.22\textwidth]{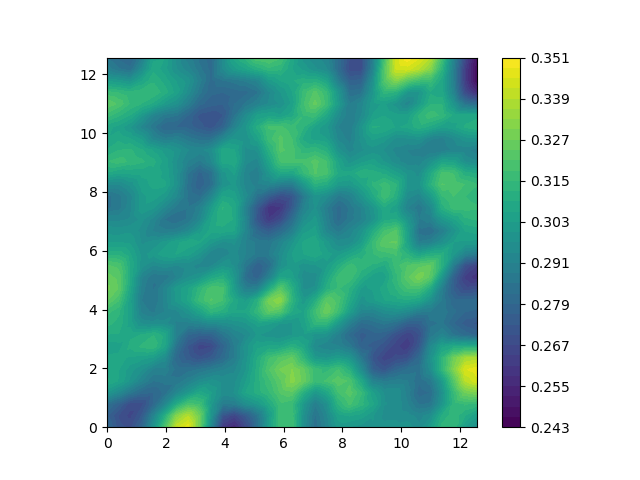} \hspace{0.5em}
	\includegraphics[width=0.22\textwidth]{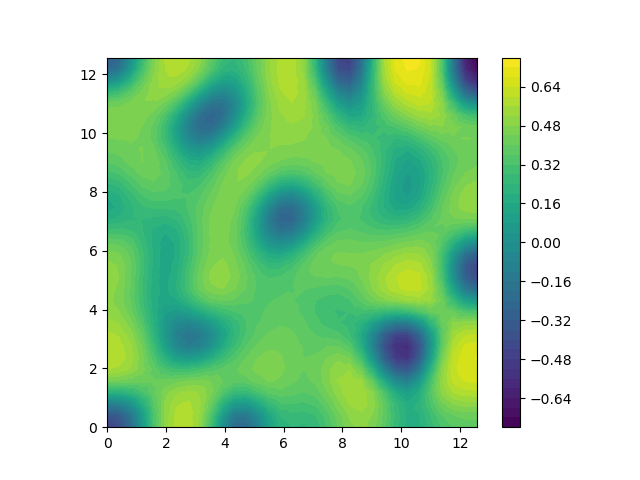} \hspace{0.5em}
	\includegraphics[width=0.22\textwidth]{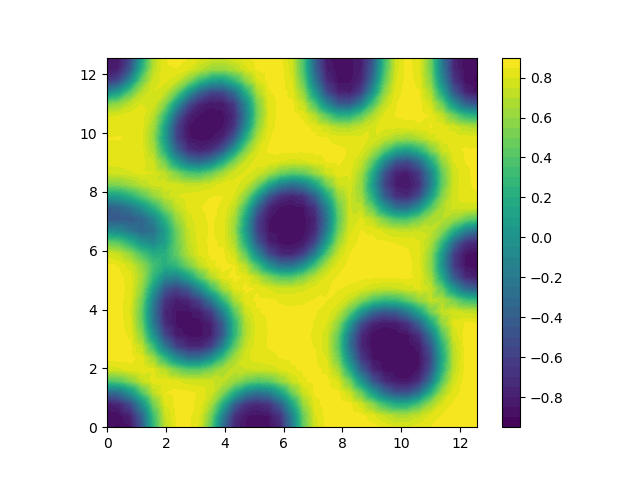} \hspace{0.5em}
	\includegraphics[width=0.22\textwidth]{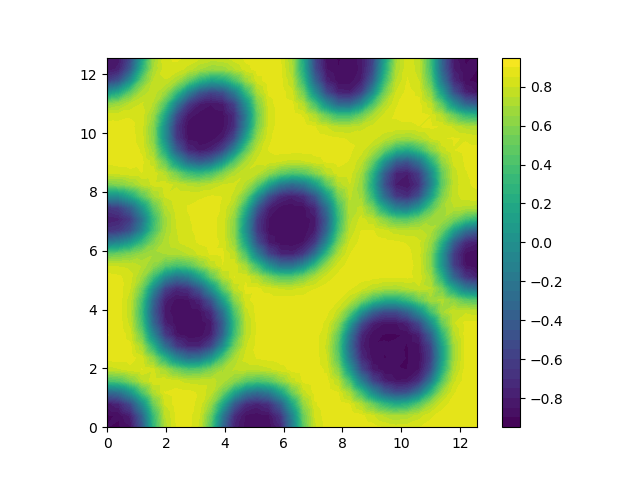}\\
	
	\vspace{1em} 
	
	\includegraphics[width=0.22\textwidth]{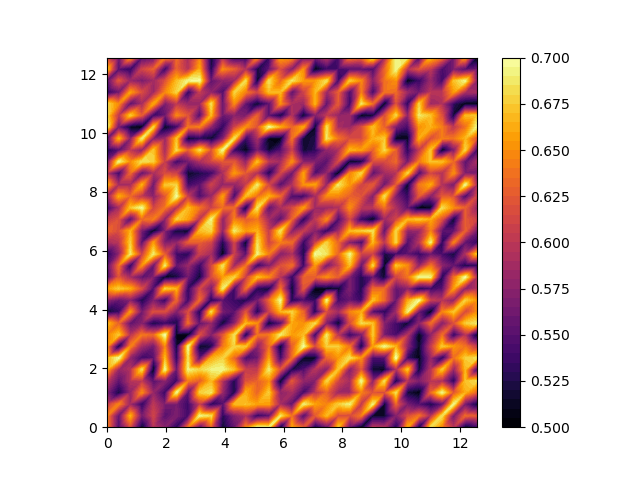} \hspace{0.5em}
	\includegraphics[width=0.22\textwidth]{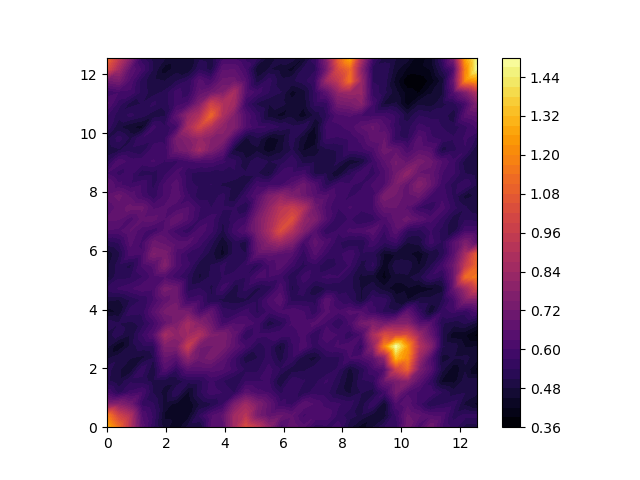} \hspace{0.5em}
	\includegraphics[width=0.22\textwidth]{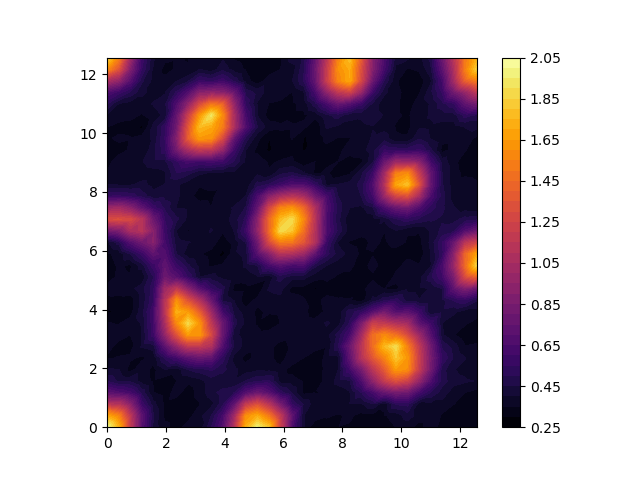} \hspace{0.5em}
	\includegraphics[width=0.22\textwidth]{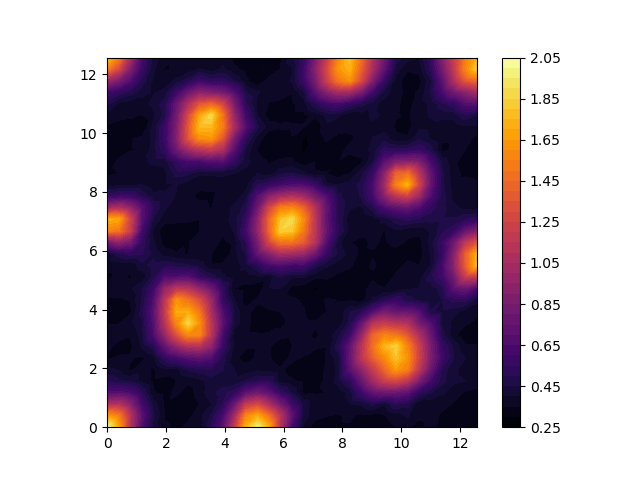}\\
	
	\caption{Evolution of $\phi$ (top row) and $c$ (bottom row) at selected time steps $t = 0.0, 0.4, 0.8, 1.0$.}
	\label{fig:phic_snapshots}
\end{figure}

\section{Conclusions}\label{sec-con}
In conclusion, this paper provides error estimates and proves the convergence of a stabilized numerical scheme for the Cahn--Hilliard(CH) cross-diffusion model. The energy stability and the existence of the numerical solution are proved as well. A technique is used to overcome the difficulty caused by the nonlinear cross-diffusion matrix of the system: the analysis of the error estimates and the convergence rates are based on the $\sum_{k=0}^n\tau\|\nabla(\cdot)\|_{L^{\frac65}}^{\frac43}$ norm estimate of the chemical potentials instead of the $\sum_{k=0}^n\tau\|\nabla(\cdot)\|_{L^{2}}^{2}$ norm estimate, which is different from the numerical schemes for the CH equation in literature. This study explores the link between the cross-diffusion model and the CH equation and demonstrates the numerical scheme's feasibility. Additionally, the numerical results verify the theoretical findings.
Future work will focus on the degenerate model, which is a more practical and challenging scenario. This investigation will enhance understanding of the system's behavior and develop more robust modeling and analyzing techniques.

\section*{Data availability}
The author does not analyze or generate any datasets, as the research is theoretical and focuses on mathematical approaches.

\section*{Declaration}
\textbf{Competing Interests} The author declares no competing interests relevant to the contents of this article.



\bibliographystyle{ieeetr}

\end{document}